\newcommand{\stkout}[1]{\ifmmode\text{\sout{\ensuremath{#1}}}\else\sout{#1}\fi}
\renewcommand{\Delta}{\triangle}
\newcommand{\bbP}{\mathbb P}
\newcommand{\tr}{\text{tr}}
\definecolor{darkblue}{rgb}{0,0,0.7}
\definecolor{darkgreen}{rgb}{0.01,0.75,0.24}
\def \Ee[#1]{\mathcal{E}^{\text{{#1}}}}
\def\R{\mathbf{R}}
\def\pa[#1,#2]{\frac{\partial {#1}}{\partial {#2}} }
\def\idom[#1,#2,#3]{\int_{#1}\hspace{1pt} {#2} \hspace{1pt} \text{d}{#3}}
\def\res[#1,#2]{\left.{#1}\right|_{#2}}
\def\var[#1,#2]{\langle \delta \mathcal{E}^{\text{{#1}}}({#2}),v\rangle}
\def\vars[#1,#2,#3]{\langle \delta^2\mathcal{E}^{\text{{#1}}}({#2})v,{#3}\rangle}
\def\vard[#1,#2,#3,#4]{\langle \delta\mathcal{E}^{\text{{#1}}}({#2})-\delta\mathcal{E}^{\text{{#3}}}({#4}),v\rangle}
\def\P{\mathbb{P}}
\def\E{\mathbb{E}}
\def\N{\mathbb{N}}
\newcommand{\Tr}{\mathrm{Tr}}
\newcommand{\calU}{\mathcal{U}}
\newcommand{\calD}{\mathcal{D}}
\newcommand{\calA}{\mathcal{A}}
\newcommand{\calL}{\mathcal{L}}
\newcommand{\Fhat}{\widehat{F}}
\newcommand{\lambdahat}{\hat{\lambda}}
\DeclareMathOperator*{\argmin}{arg\,min}
\newcommand{\be}{\begin{equation}}
\newcommand{\en}{\end{equation}}
\newcommand{\ben}{\begin{equation*}}
\newcommand{\enn}{\end{equation*}}
\newcommand{\bea}{\begin{aligned}}
\newcommand{\ena}{\end{aligned}}
\def\ba#1\ena{\begin{align}#1\end{align}}
\def\ban#1\enan{\begin{align*}#1\end{align*}}
\theoremstyle{plain}
\newtheorem{thm}{Theorem}[section]
\newtheorem{lem}[thm]{Lemma}
\newtheorem{prop}[thm]{Proposition}
\newtheorem{assumption}[thm]{Assumption}
\definecolor{mypink1}{rgb}{0.858, 0.188, 0.478}
\newtheorem{remark}[thm]{Remark}
\numberwithin{equation}{section}
\begin{document}

%\title[Adaptive Tikhonov regularization schemes for EKI]{Adaptive Tikhonov regularization schemes for  ensemble Kalman inversion}
\title[Consistency of bilevel data-driven learning in inverse problems]{Consistency analysis of bilevel data-driven learning in inverse problems}

\author[N. K. Chada] {Neil K. Chada}
\address{Applied Mathematics and Computational Science Program, King Abdullah University of Science and Technology, Thuwal, 23955, KSA}
\email{neilchada123@gmail.com}

\author[C. Schillings] {Claudia Schillings}
\address{Mannheim School of Computer Science and Mathematics, University of Mannheim, 68131 Mannheim, Germany}
\email{c.schillings@uni-mannheim.de}

\author[X. T. Tong] {Xin T. Tong}
\address{Department of Mathematics, National University of Singapore, 119077, Singapore}
\email{mattxin@nus.edu.sg}

\author[S. Weissmann] {Simon Weissmann}
\address{Interdisciplinary Center for Scientific Computing, University of Heidelberg, 69120 Heidelberg, Germany}
\email{simon.weissmann@uni-heidelberg.de}

\begin{abstract}

{
One fundamental problem when solving  inverse problems is how to find regularization parameters. 
This article considers solving this problem using data-driven bilevel optimization{, i.e. we consider the adaptive learning of the regularization parameter from data by means of optimization.}
This approach can be interpreted as solving an empirical risk minimization problem, and {we analyze its performance in the large data sample size limit for general} nonlinear problems. {We demonstrate how to implement our framework on linear inverse problems, where we can further show the inverse accuracy does not depend on the ambient space dimension.}
To reduce the associated computational cost, online numerical schemes are derived using  the stochastic gradient descent method. {We prove} convergence of these numerical schemes under suitable assumptions on the forward problem. 
Numerical experiments are presented illustrating the theoretical results and demonstrating the applicability and efficiency of the proposed approaches for various linear and nonlinear inverse problems, including Darcy flow, the eikonal equation, and an image denoising example.}
\end{abstract}

\maketitle
\bigskip
\textbf{AMS subject classifications:} {35R30, 90C15, 62F12, 65K10.} \\
\textbf{Keywords}: {bilevel optimization, statistical consistency, inverse problems, \\ stochastic gradient descent, Tikhonov regularization}

\section{Introduction}
\label{sec:intro}
Data-driven {modeling  seeks to  improve model accuracy and predictability by exploiting informations from existing data}. 
{It has lead to a wide range of successes} in deep learning, reinforcement learning, natural language processing and others \cite{FTH01,GBC16,SB14}.  
{This article is interested in its applications when solving inverse problems. Mathematically speaking, when solving an inverse problems, we try to recover a} $u \in \mathcal{U}$ from a perturbed data $y \in \mathcal{Y}$ where their relationship is given as
\begin{equation}
\label{eq:inverse_p}
y = \mathcal{G}(u) + \eta.
\end{equation}
{In \eqref{eq:inverse_p},} {$\eta$ denotes an additive observational noise} and $\mathcal{G}:\mathcal{U} \rightarrow \mathcal{Y}$ is the mapping from the parameter space $\mathcal U$ to the observation space $\mathcal Y$. Here, $\mathcal U$ and $\mathcal Y$ denote possibly infinite dimensional Banach spaces. Solutions to inverse problems have been well-studied through the use of variational and optimization methods which are well-documented in the following texts \cite{EHN96,AT87}. 

{Regularization is an important aspect of the numerical treatment of inverse problems.  It helps} overcoming the ill-posedness problem {in theory and the overfitting phenomenon in practice. It can also be interpreted as a form of a-priori knowledge in  the Bayesian approach \cite{KS04,AMS10}.} {To implement regularization on} \eqref{eq:inverse_p},  we estimate the unknown parameter $u$ by minimizing a regularized loss function, i.e. we consider
\begin{equation}
\label{eq:fun}
{u_{\lambda}} := \argmin_{u \in \mathcal{U}}\ \mathcal L_{\mathcal Y}(\mathcal{G}(u),y) + \mathcal S_\lambda(u), \quad \lambda\in\R_+,
\end{equation}
where $\mathcal L_{\mathcal Y}:\mathcal Y\times\mathcal Y\to\R_+$ is some metric in $\mathcal Y$ and $S_\lambda:\mathcal U\to\R_+$ is a regularization function with regularization parameter $\lambda>0$. A common choice is Tikhonov regularization \cite{TLY98} which can be included in \eqref{eq:fun} through the penalty term $S_\lambda(u) = \frac\lambda2\|u\|_{\mathcal U}^2$. The choice of norm $\|\,\cdot\,\|_{\mathcal U}$ often models prior information on the unknown parameter. Other common forms include $L_1$ and total variation regularization, which are particularly useful for imaging purposes \cite{BB18,EHN96,LP13}.

In \eqref{eq:fun}, the parameter $\lambda$ balances the influence of the data and the a-priori knowledge via the regularization. While expert knowledge can often provide a rough range of $\lambda$, the exact value, i.e. the $\lambda$ leading to the best estimation of the unknown parameter $u$, is often difficult to determine. However, the parameter $\lambda$ strongly influences the accuracy of the estimate and has to be properly chosen. Bilevel optimization is one way to resolve this issue \cite{CMS07,SD02,SMD18,AT87}. It seeks to learn the regularization parameter in a variational manner, and it can be viewed as a data--driven regularization \cite{AMOS19}. To formulate this approach, we view unknown parameter $U\in\mathcal U$ and the data $Y\in\mathcal Y$ in the model \eqref{eq:inverse_p} as a jointly distributed random variable with distribution $\mu_{(U,Y)}$. To find the best possible regularization parameter of the model \eqref{eq:inverse_p}, the bilevel minimization seeks to solve
\begin{equation}\label{eq:bi-level_opt}
\begin{alignedat}{2}
\lambda_\ast &= \underset{\lambda>0}{\argmin}\ F(\lambda),\quad F(\lambda)=\E_{\mu_{(U,Y)}}[\mathcal{L}_{\mathcal{U}}(u_\lambda(Y),U)],\quad &\text{(upper level)}\\
u_{\lambda}(Y)&:=\underset{u\in\mathcal U}{\argmin}\ \mathcal{L}_{\mathcal{Y}}(\mathcal G(u),Y) + \mathcal{S}_{\lambda}(u), &\text{(lower level)}
\end{alignedat}
\end{equation}
where $\mathcal L_{\mathcal U}:\mathcal U\times\mathcal U\to\R_+$ is some metric in the parameter space $\mathcal U$. The upper level problem seeks to minimize the distance between the unknown parameter $U$ and the regularized solution corresponding to its data $Y$, which is computed through $u_\lambda(Y)$ in the lower level problem. To solve this (stochastic) bilevel optimization problem, we assume that we have access to training data, given through samples of $(U_i,Y_i)\sim\mu_{(U,Y)}$, and the function $F$ in \eqref{eq:bi-level_opt} can be approximated by its empirical Monte--Carlo approximation. The area of bilevel optimization has been applied to various methodologies for inverse problems.  
To motivate this we provide various examples of the application of bilevel optimization, in the setting describe by \eqref{eq:bi-level_opt}, to inverse problems and an overview of recent literature.

\subsection{Motivating Examples}

\subsubsection{Example 1 - PDE-constrained inverse problems}
\label{ssec:pde} 
We first consider a inverse problem \eqref{eq:inverse_p} with the lower level problem formulated by a partial differential equation (PDE):
\begin{equation}\label{eq:IP_PDE_constr}
\begin{split}
&\argmin_{u \in \mathcal{U}}\  \mathcal L_{\mathcal{Y}}(\mathcal{O}(p), y) + \mathcal{S}_{\lambda}(u), \\  & \vspace{5em} \textrm{s.t.} \ M(u,p)=0,
\end{split}
\end{equation}
 where $u\in\mathcal U$ denotes the unknown parameter and $p\in\mathcal V$ is the state.
The function $M:\mathcal U\times \mathcal V\to\mathcal W$ describes  an underlying ODE or PDE model. The operator $\mathcal O:\mathcal V\to\R^K$ denotes the observation operator which maps the state $p$ to finite dimensional observations.  {The Darcy's flow problem is one such example. In particular, $u$ describes a subsurface structure, $p$ is the corresponding pressure field, $M$ describes the Darcy's law, and $\mathcal O$ evaluate $p$ at different locations.} 
  
 In order to formulate the corresponding bilevel problem \eqref{eq:bi-level_opt}, we assume that the forward model $M(u,p)=0$ is well-posed, which means that for each $u\in\mathcal U$ there exists a unique $p\in\mathcal V$ such that $M(u,p)=0\in\mathcal W$. Hence, using the solution operator $G:\mathcal U\to\mathcal V$ s.t.~$M(u,G(u))=0$, we can formulate the reduced problem of \eqref{eq:IP_PDE_constr} by
\begin{equation}\label{eq:IP_PDE_constr_reduced}
\argmin_{u \in \mathcal{U}}\ \mathcal L_{\mathcal{Y}}(\mathcal G(u),y)+ \mathcal{S}_{\lambda}(u),
\end{equation}
 where we have defined $\mathcal G=\mathcal O\circ G$. Hence, given a training data set $(u^{(j)},y^{(j)})$ we can also formulate the empirical bilevel minimization problem
 \begin{equation}\label{eq:PDE_bileve}
\begin{split}
\widehat\lambda_{n} &:= \argmin_{\lambda>0} \frac{1}{n}\sum^{n}_{j=1}\|u_{\lambda}(y^{(j)}) - u^{(j)}\|^2_2,\\
u_\lambda(y^{(j)}) &:= \argmin_{u \in \mathcal{U}}\ \mathcal L_{\mathcal Y}(\mathcal{G}(u),y^{(j)}) + S_\lambda(u).
\end{split}
\end{equation}

In terms of applications, many inverse problems arising in PDEs \cite{YYB02} are concerned with the recovery of an unknown which is heterogeneous. As a result it is very natural to model the unknown as a Gaussian random fields. Such models include Darcy flow, the Navier--Stokes model \cite{KBJ14} and electrical impedance tomography \cite{LB02,KS04}. Physical constraints such as boundary, or initial conditions are required for modeling purposes.

Holler et al. \cite{HKB18} consider bilevel optimization for inverse problems in the setting of \eqref{eq:IP_PDE_constr}. They provide theory which suggests existence of solutions and formulate their problem as an optimal control problem. This is connected with the work of Kaltenbacher \cite{KKV14,BK16} who  provided a modified approach known as ``all-at-once" inversion. These works have also been used in the context of deconvolution \cite{CC011,CEN14,SCCKT18}.%, which have considered
 
\subsubsection{Example 2 - Image \& signal processing} \label{ssec:image} Bilevel optimization is a popular solution choice for image processing problems \cite{BMO08,KP13}. In these problems, one is interested in optimizing over an underlying image and particular areas/segments of that image. A common example of this includes image denoising which is to remove noise from an image.  Another example is image deblurring where the image is commonly given as a convolution with a linear kernel $\mathcal{A}$, i.e.
$$
y = \mathcal{A}\ast u+\eta,
$$
{where $\ast$ denotes the convolution of $\mathcal{A}$ and $u$, commonly expressed as
$$
\mathcal{A}\ast u{(x)}  = \int_{\R^d}\mathcal{A}(x-\tau)u(\tau)d\tau.
$$}
 This inverse problem is also known as deconvolution. The setting of \eqref{eq:bi-level_opt} is common for deconvolution, where their loss functions are given as
\begin{equation}\label{eq:imaging_bilevel}
\begin{split}
\widehat\lambda_{n} &:= \argmin_{\lambda>0} \frac{1}{n}\sum^{n}_{j=1}\|u_{\lambda}(y^{(j)}) - u^{(j)}\|^2_2,\\
u_\lambda(y^{(j)}) &:= \argmin_{u \in \mathcal{U}}\ \mathcal L_{\mathcal Y}(\mathcal{A}*u,y^{(j)}) + \lambda\|Lu\|^2_2.
\end{split}
\end{equation}
In \eqref{eq:imaging_bilevel}, $L$ is a regularization matrix, and the upper level problem is taken as the minimization of the empirical loss function given by a training data set $(u^{(j)},y^{(j)})$. Commonly $\lambda$ is taken to be either a weighted function between $\mathcal L_{\mathcal Y}$ and the penalty term, or it can be viewed as the noise within a system. Common choices of $L$ traditionally are $L=I$ or a first or second order operator, which can depend on the unknown or image of interest. Further detail on the choice of $L$  and $\mathcal{A}$ are discussed in \cite{BMO08}.

The work of De los Reyes, Sch\"onlieb \cite{CCDSV16,DDT20,DS13,DSV17} and coauthors considered the application of bilevel optimization to denoising and deblurring, where non-smooth regularization is used such as total variation and Bregman regularization. The latter forms of regularization are useful in imaging as they preserve non-smooth features, such as edges and straight lines. {Recent developments of these techniques using Bayesian methodologies for imaging can be found in \cite{VDPD20,DDVP20}}.

\subsection{Our contributions}
In this article, we investigate two different {approaches to solve bilevel optimization} and their performance on  inverse problems. 
Firstly we formulate the offline approach of bilevel optimization as an empirical risk minimization (ERM) problem. 
Analyzing the performance of the ERM solution is difficult, since the loss function is random and non-convex, so numerical solutions often can only find local minimums.
We build  a theoretical framework under these general settings. In particular, it provides convergence rate of the ERM solution when sample size goes to infinity.  This framework is applied to linear inverse problems to understand the performance of bilevel optimization approach. {Moreover, our results depend only on the effective dimension, but not the ambient space dimension. This is an important aspect in inverse problems since the underlying space can be of infinite dimension.}

Secondly, we discuss how to implement stochastic gradient descent (SGD) methods on bilevel optimization. SGD is a popular optimization  tool for empirical risk minimization because of its straightfoward implementation and efficiency. The low computational costs are particularly appealing in the bilevel context as  finding the lower-level solution is already time consuming. Besides exact SGD, we also consider SGD with central difference approximation. This can be useful for problems with complicated forward observation models. A general consistency analysis framework is formulated for both exact SGD and approximated SGD. We demonstrate how to apply this framework to linear inverse problems.

Various numerical results are presented highlighting and verifying the theory acquired. Our results are firstly presented on  various partial differential equations both linear and nonlinear which include Darcy flow and the eikonal equation, as motivated through Example 1 in subsection \ref{ssec:pde}. {We also test our theory on an image denoising example which is discussed through Example 2 in subsection \ref{ssec:image}. In particular, we demonstrate numerically the statistical consistency result which includes the rate of convergence and we show the learned parameter $\lambda$ within each inverse problem experiment outperforms that with a fixed $\lambda$.}

{We emphasize that with our findings and results in this work, our focus is not on developing new methodology for bilevel learning. Instead our focus is on building a statistical understanding of bilevel learning through the notion of statistical consistency and convergence of numerical schemes.}

\subsection{Organization}
The structure of this paper is given as follows. In Section \ref{sec:stoc} we present the bilevel optimization problem of interest in a stochastic framework, and present a statistical consistency result of the adaptive parameter. We then extend this result to the linear inverse setting with Tikhonov regularization. This will lead onto Section \ref{sec:reg} where we discuss the stochastic gradient descent method and its application in our bilevel optimization problem. We provide various assumptions required where we then show in the linear setting that our parameter converges in $L_2$ to the minimizer. In Section \ref{sec:num} we test our theory on various numerical models which include both linear and nonlinear models such as Darcy flow and the eikonal equation. This will also include an image denoising example.  Finally, we conclude our findings in Section \ref{sec:conc}. The appendix will contain the proofs for results in Section \ref{sec:stoc} and Section \ref{sec:reg}.

\section{Regularization parameter offline recovery}
\label{sec:stoc}
In this section we  discuss how to use offline bilevel optimization to recover regularization parameters. We also show the solution is statistically consistent under suitable conditions. 

\subsection{Offline bilevel optimization}
 Regularization parameter learning by bilevel optimization views the unknown parameter $U$ and the data $Y$ as a jointly distributed random variable with distribution $\mu_{(U,Y)}$, see e.g. \cite{AMOS19} for more details. Recall the bilevel optimization problem is given by
\begin{equation*}
\begin{alignedat}{2}
\lambda_\ast &= \underset{\lambda\in \Lambda}{\argmin}\ F(\lambda),\quad F(\lambda)=\E_{\mu_{(U,Y)}}[\mathcal{L}_{\mathcal{U}}(u_\lambda(Y),U)],\quad &\text{(upper level)}\\
u_{\lambda}(Y)&:=\underset{u\in\mathcal U}{\argmin}\ \Psi(\lambda,u,Y),\quad \Psi(\lambda,u,y):=\mathcal{L}_{\mathcal{Y}}(\mathcal G(u),y) + \mathcal{S}_{\lambda}(u), &\text{(lower level)}
\end{alignedat}
\end{equation*}
where $\mathcal{L}_{\mathcal{U}}$ denotes a discrepancy function in the parameter space $\mathcal{U}:=\R^d$ and $\mathcal L_{\mathcal{Y}}$ denotes a discrepancy function in the observation space $\mathcal{Y}:=\R^K$. $S_\lambda(U)$ represents the regularization with  parameter $\lambda\in \Lambda$. Here, $\Lambda$  represents the range of regularization parameters which often come from physical constraints. For simplicity, we assume all the functions here are continuous and integrable, and so are their first and second order derivatives with respect to $\lambda$.  

In general, we do not know the exact distribution $\mu$ in the upper level of \eqref{eq:bi-level_opt}. We consider the scenario where we have access to training data $(u^{(j)},y^{(j)})_{j=1}^n$, which we assume to be i.i.d. samples from  $\mu_{(U,Y)}$. With these data, we can approximate  $F$ in \eqref{eq:bi-level_opt} by its empirical average:
\begin{equation}\label{eq:empirical_loss}
\Fhat_n=\frac{1}{n}\sum_{j=1}^n \mathcal{L}_{\mathcal{U}}(u_\lambda(y^{(j)}),u^{(j)}).
\end{equation}
This leads to a data-driven estimator of the regularization parameter,
\begin{equation}\label{eq:bi-level_empirical}
\begin{split}
\lambdahat_n&= \underset{\lambda\in\Lambda}{\argmin}\ \Fhat_n,\\
u_\lambda(y^{(j)}) &= \underset{u\in\mathcal U}{\argmin}\ \mathcal{L}_{\mathcal{Y}}(\mathcal G(u),y^{(j)}) + \mathcal{S}_{\lambda}(u).
\end{split}
\end{equation}
This method of estimation is often known as empirical risk minimization in machine learning \cite{SB14}. 
We refer to this as ``offline" since minimizing $\Fhat_n$ involves all $n$ data points at each algorithmic iteration.
With $\lambdahat_n$ being formulated, it is of natural interest to investigate its convergence to the true parameter $\lambda_\ast$, when the sample size increases.  Consistency analysis is of central interest in the study of statistics. In particular, if $\lambdahat_n$ is the global minimum of $\Fhat_n$, we have the following theorem 5.2.3 \cite{BD71} from Bickel and Doksum, formulated in our notation
\begin{thm}
\label{thm:Bickel}
Suppose for any $\epsilon>0$
\begin{equation*}
\P(\sup\{\lambda\in \Lambda, |\Fhat_n(\lambda)-F(\lambda)|\}>\epsilon)\to 0,
\end{equation*}
as $n\to\infty$, $\lambdahat_n$ is the global minimizer of $\Fhat_n$,
and $\lambda_\ast$ is the unique minimizer of $F$, then $\lambdahat_n$ is a consistent estimator.
\end{thm}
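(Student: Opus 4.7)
The plan is to follow the standard argmin continuous-mapping argument: deduce closeness of $\hat\lambda_n$ to $\lambda_\ast$ from uniform closeness of $\hat F_n$ to $F$, using that the unique minimizer of $F$ is ``well-separated.'' Since the paper assumes $F$ and $\hat F_n$ are continuous (in fact twice differentiable in $\lambda$), the parameter range $\Lambda$ is naturally treated as a (compact) set of admissible regularization parameters, which is what makes the well-separation argument go through.

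\textbf{Step 1 (well-separated minimum).} Fix $\epsilon>0$ and let $K_\epsilon = \{\lambda \in \Lambda : |\lambda - \lambda_\ast| \ge \epsilon\}$. Because $K_\epsilon$ is closed in the compact set $\Lambda$, hence compact, and $F$ is continuous, $F$ attains its infimum on $K_\epsilon$ at some $\lambda_\epsilon \in K_\epsilon$. Since $\lambda_\ast$ is the \emph{unique} minimizer of $F$ on $\Lambda$, we have $\lambda_\epsilon \neq \lambda_\ast$ and therefore
\[
\delta := F(\lambda_\epsilon) - F(\lambda_\ast) = \inf_{\lambda \in K_\epsilon} F(\lambda) - F(\lambda_\ast) > 0.
\]

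\textbf{Step 2 (comparison on a good event).} Define the good event
\[
A_n := \Bigl\{\sup_{\lambda \in \Lambda} |\hat F_n(\lambda) - F(\lambda)| < \delta/3\Bigr\}.
\]
On $A_n$, using $\hat\lambda_n = \argmin_{\lambda \in \Lambda} \hat F_n(\lambda)$ so that $\hat F_n(\hat\lambda_n) \le \hat F_n(\lambda_\ast)$,
\[
F(\hat\lambda_n) < \hat F_n(\hat\lambda_n) + \tfrac{\delta}{3} \;\le\; \hat F_n(\lambda_\ast) + \tfrac{\delta}{3} \;<\; F(\lambda_\ast) + \tfrac{2\delta}{3}.
\]
Thus $F(\hat\lambda_n) - F(\lambda_\ast) < 2\delta/3 < \delta$. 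By the definition of $\delta$ in Step 1, this forces $\hat\lambda_n \notin K_\epsilon$, i.e.\ $|\hat\lambda_n - \lambda_\ast| < \epsilon$. Consequently, on $A_n$ we have $\{|\hat\lambda_n - \lambda_\ast| \ge \epsilon\} = \emptyset$, so
\[
\mathbb{P}(|\hat\lambda_n - \lambda_\ast| \ge \epsilon) \;\le\; \mathbb{P}(A_n^c).
\]

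\textbf{Step 3 (conclude from the hypothesis).} By the uniform-in-probability assumption, $\mathbb{P}(A_n^c) \to 0$ as $n \to \infty$. Since $\epsilon > 0$ was arbitrary, $\hat\lambda_n \to \lambda_\ast$ in probability, establishing consistency.

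The main obstacle, and the only nontrivial input beyond bookkeeping, is Step 1: turning the \emph{qualitative} hypothesis ``$\lambda_\ast$ is the unique minimizer'' into the \emph{quantitative} well-separation $\inf_{K_\epsilon} F > F(\lambda_\ast)$. This needs compactness of $\Lambda$ together with continuity of $F$; both are available here, $F$ from the standing continuity assumption and $\Lambda$ from its interpretation as a physically constrained range of regularization parameters. If one only had $\Lambda = (0,\infty)$, one would need to argue that the infimum of $F$ over $\lambda \to 0$ and $\lambda \to \infty$ strictly exceeds $F(\lambda_\ast)$, which amounts to a coercivity condition on $F$; such a condition would be the natural place to pay extra care in a setting with non-compact parameter range.
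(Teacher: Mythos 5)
The paper offers no proof of this statement: it is quoted verbatim (in the authors' notation) from Bickel and Doksum, Theorem~5.2.3, so there is no in-paper argument to compare against line by line. Your proof is the standard argmin-consistency argument (well-separated minimum plus uniform convergence in probability), and Steps 2 and 3 are correct and essentially forced. The only substantive issue is the one you yourself flag in Step 1: uniqueness of the minimizer does \emph{not} imply the quantitative separation $\inf_{|\lambda-\lambda_\ast|\ge\epsilon}F(\lambda)>F(\lambda_\ast)$ without compactness of $\Lambda$, and the theorem as stated leaves $\Lambda$ unspecified --- indeed, where the paper later instantiates it, $\Lambda=(\lambda_l,\lambda_u)$ is an \emph{open} interval, so $K_\epsilon$ is not compact and the infimum could be approached along a sequence tending to the boundary. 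The original Bickel--Doksum hypothesis is precisely the well-separation condition $\inf_{|\lambda-\lambda_\ast|\ge\epsilon}F(\lambda)>F(\lambda_\ast)$ rather than bare uniqueness, so the cleanest fix is either to read ``unique minimizer'' as shorthand for that condition, or to take $\Lambda$ compact (e.g.\ $[\lambda_l,\lambda_u]$), after which your Step~1 goes through verbatim. With that reading your proof is complete; a very minor cosmetic point is that in Step~2 the set identity should be $A_n\cap\{|\hat\lambda_n-\lambda_\ast|\ge\epsilon\}=\emptyset$, i.e.\ $\{|\hat\lambda_n-\lambda_\ast|\ge\epsilon\}\subseteq A_n^c$, which is what the displayed probability bound actually uses.
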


In more practical scenarios, the finding of $\lambdahat_n$ relies on the choice of optimization algorithms. If we are using gradient based algorithms, such as gradient descent,  $\lambdahat_n$ can be the global minimum of $\Fhat_n$ if $\Fhat_n$ is convex. More generally, we can only assume $\lambdahat_n$ to be a stationary point of $\Fhat_n$, i.e. $\nabla\Fhat_n(\lambdahat_n)=0$. In such situations, we provide the following alternative tool replacing Theorem \ref{thm:Bickel}:

\begin{prop}\label{prop:cond_conv}
Suppose $F$ is $C^2$, $\lambda_\ast$ is a local minimum of $F$, and $\lambdahat_n$ is a local minimum of $\Fhat_n$. 
Let $\mathcal D$ be  an open convex neighborhood of $\lambda_\ast$  in the parameter space and $c_0$ be a positive constant.  Denote $\mathcal{A}_n$ as the event
\[
\mathcal{A}_n=\{\lambdahat_n\in \mathcal D, \nabla^2_\lambda \Fhat_n(\lambda)\succeq c_0 I\text{ for all }\lambda\in \mathcal D\}.
\]
When $\mathcal{A}_n$ takes place, the following holds:
\[
\|\lambdahat_n-\lambda_\ast\|\leq \frac{\|\partial_\lambda \Fhat_n(\lambda_\ast)-\partial_\lambda F(\lambda_\ast)\|}{c_0}.
\]
In particular, we have
\[
\E 1_{\calA_n}\|\lambdahat_n-\lambda_\ast\|\leq  \frac{\sqrt{\text{tr}(\text{Var}(\partial_{\lambda}f(\lambda_\ast,Z)))}}{c_0\sqrt{n}}.
\]
\end{prop}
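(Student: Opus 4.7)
The plan is to establish the pointwise inequality first (on the event $\calA_n$) and then take expectations, using the fact that $\partial_\lambda \Fhat_n(\lambda_\ast)$ is a sample mean of mean-zero terms.

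First, I would record the two first-order conditions: $\partial_\lambda F(\lambda_\ast)=0$ because $\lambda_\ast$ is an interior local minimum of the $C^2$ function $F$, and $\partial_\lambda \Fhat_n(\lambdahat_n)=0$ because, on $\calA_n$, $\lambdahat_n$ lies in the open set $\mathcal D$ and is a local minimum of $\Fhat_n$. Next, on $\calA_n$ the segment from $\lambda_\ast$ to $\lambdahat_n$ is contained in $\mathcal D$ by convexity, so I can apply the fundamental theorem of calculus to the map $t\mapsto \partial_\lambda \Fhat_n(\lambda_\ast + t(\lambdahat_n-\lambda_\ast))$ to write
\[
\partial_\lambda \Fhat_n(\lambdahat_n) - \partial_\lambda \Fhat_n(\lambda_\ast) = H_n (\lambdahat_n-\lambda_\ast), \qquad H_n := \int_0^1 \nabla^2_\lambda \Fhat_n\bigl(\lambda_\ast+t(\lambdahat_n-\lambda_\ast)\bigr)\,dt.
\]
The hypothesis on $\calA_n$ gives $H_n \succeq c_0 I$, hence $H_n$ is invertible with $\|H_n^{-1}\|\leq c_0^{-1}$. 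Combining with the two first-order conditions and $\partial_\lambda F(\lambda_\ast)=0$ yields
\[
\|\lambdahat_n - \lambda_\ast\| \leq \tfrac{1}{c_0}\,\|\partial_\lambda \Fhat_n(\lambda_\ast)\| = \tfrac{1}{c_0}\,\|\partial_\lambda \Fhat_n(\lambda_\ast) - \partial_\lambda F(\lambda_\ast)\|,
\]
which is the first claim.

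For the expectation bound, write $f(\lambda,z)=\mathcal L_{\mathcal U}(u_\lambda(y),u)$ for $z=(u,y)$, so that $\Fhat_n(\lambda)=\tfrac1n\sum_{j=1}^n f(\lambda,Z_j)$ with $Z_j$ i.i.d.\ from $\mu_{(U,Y)}$. Using the regularity hypothesis to interchange differentiation and expectation gives $\E[\partial_\lambda f(\lambda_\ast,Z)] = \partial_\lambda F(\lambda_\ast)=0$, so the random vector $\partial_\lambda \Fhat_n(\lambda_\ast)=\tfrac1n\sum_j \partial_\lambda f(\lambda_\ast,Z_j)$ is a centered sample mean with covariance $\tfrac1n\Var(\partial_\lambda f(\lambda_\ast,Z))$. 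Multiplying the pointwise inequality by $\mathbf 1_{\calA_n}$, taking expectations, and applying Jensen/Cauchy--Schwarz gives
\[
\E\,\mathbf 1_{\calA_n}\|\lambdahat_n-\lambda_\ast\| \leq \tfrac{1}{c_0}\,\E\|\partial_\lambda \Fhat_n(\lambda_\ast)\| \leq \tfrac{1}{c_0}\sqrt{\E\|\partial_\lambda \Fhat_n(\lambda_\ast)\|^2} = \tfrac{1}{c_0\sqrt n}\sqrt{\tr\bigl(\Var(\partial_\lambda f(\lambda_\ast,Z))\bigr)},
\]
which is the stated bound.

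The routine parts are the two FTC steps and the variance identity for an i.i.d.\ sample mean. The one step that deserves care (and is the main obstacle to writing a clean argument) is justifying the passage from the stationarity of $\lambdahat_n$ and the Hessian lower bound on $\mathcal D$ to invertibility of the path-averaged Hessian $H_n$; this is where convexity of $\mathcal D$ and containment of $\lambdahat_n$ in $\mathcal D$ are essential, and it is also the reason the indicator $\mathbf 1_{\calA_n}$ cannot be dropped. The exchange of $\partial_\lambda$ with $\E$ in defining $\partial_\lambda F$ is granted by the standing assumption in the paper that the relevant derivatives are continuous and integrable.
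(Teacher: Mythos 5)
Your proposal is correct and follows essentially the same route as the paper's proof: the first-order conditions at $\lambda_\ast$ and $\lambdahat_n$, the fundamental theorem of calculus applied to $\partial_\lambda\Fhat_n$ along the segment joining them (valid by convexity of $\mathcal D$), the lower bound $c_0 I$ on the path-averaged Hessian, and then the i.i.d.\ sample-mean variance identity combined with Cauchy--Schwarz for the expectation bound. The only cosmetic difference is that you invoke $\partial_\lambda F(\lambda_\ast)=0$ up front to simplify to $\|\partial_\lambda\Fhat_n(\lambda_\ast)\|$, whereas the paper carries the difference $\partial_\lambda\Fhat_n(\lambda_\ast)-\partial_\lambda F(\lambda_\ast)$ throughout; the content is identical.
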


Proposition \ref{prop:cond_conv} makes two claims. From the second claim, we can see $\lambda_n$ converges to $\lambda_\ast$ at rate of $\frac{1}{\sqrt{n}}$. And with the first claim, sometimes we can have more accurate estimate on large or medium deviations. We will see how to do that in the linear inverse problem discussed below.

On the other hand, Proposition \ref{prop:cond_conv} requires the existence of region $\mathcal D$ so that both $\lambdahat_n$ and $\lambda_\ast$ are in it, moreover $\Fhat_n$ needs to be strongly convex inside $\mathcal D$. The convexity part is necessary, since without it, there might be multiple local minimums, and we will have identifiability issues. 
In order to apply Theorem \ref{prop:cond_conv}, one needs to find $\mathcal D$ and bound the probability of  outlier cases $\calA_n^c$. This procedure can be nontrivial, and requires some advanced tools from probability. We demonstrate how to do so for the linear inverse problem.

\subsection{Offline consistency analysis with linear observation models}
\label{sec:linear_Tikh}

In this section we demonstrate how to apply Proposition \ref{prop:cond_conv} for linear observation models with Tikhonov regularization.  
 
{To motivate our framework,} we assume $u\in \R^d$ and the data $y$ is observed through a matrix $A\in\R^{K\times d}$
\begin{equation}\label{eq:lin_ip}
y = Au + \xi,
\end{equation}
with Gaussian prior information $u\sim\mathcal N(0,\frac1{\lambda_\ast}C_0)$ and Gaussian noise $\xi\sim\mathcal N(0,\Gamma)$. The common choice of discrepancy functions in the lower level are the corresponding negative log-likelihoods
\[
\mathcal{L}_{\mathcal{Y}}(\mathcal{G}(u),y)=\frac12\|Au-y\|_\Gamma^2,\quad \mathcal{S}_{\lambda}(u)=\frac\lambda 2\|u\|_{C_0}^2.
\] 
Since both of these functions are quadratic in $u$, the lower level optimization problem has an explicit solution
\[
u_\lambda(y)=(A^\top\Gamma^{-1}A +\lambda C^{{-1}}_0)^{-1}A^\top \Gamma^{-1}y_i. 
\]
If we use the root-mean-square error in the upper level to learn $\lambda$, the discrepancy function is given by 
\[
f(\lambda,u,y)=\|u_\lambda(y)-u\|^2. 
\]
and the empirical loss function is defined by
\begin{equation}
\label{Fhatlinear}
\Fhat_n(\lambda)=\frac1n\sum_{i=1}^n \|u_\lambda(y_i)-u_i\|^2.
\end{equation}
It is worth mentioning that $F(\lambda)$ is not convex on the real line despite that $\mathcal G$ is linear. The detailed calculation can be found in Remark \ref{rem:app_convexity}. It is of this reason, it is necessary to introduce the local region $\mathcal{D}$ that $F$ is convex inside at Proposition \ref{prop:cond_conv}. 

{In some challenging scenarios the underlying distribution of the noise and parameter is not known. However, as long as one has access to the covariances, $\Gamma$ and $C_0$ of the underlying distribution, the bilevel optimization approach is well-defined and it can still be implemented in order to choose the regularization parameter for the inverse problem. Our theoretical results extend to the general setting, as long as $u$ is $(0,\frac{1}{\lambda_\ast}C_0)$ sub Gaussian, and $\xi$ is $(0,\Gamma)$-sub Gaussian.} 

\begin{assumption}
\label{aspt:subGaussian}
{A random vector $z$ is $(0,\Gamma)$-sub Gaussian if there exists $$ \sigma = (\sigma_1^i,\dots,\sigma_d^{i})^\top\in\R^\ell,$$ such that $$z\overset{d}{=} \Sigma^{1/2} \sigma,$$
where $\sigma_1,\dots, \sigma_\ell$ are i.i.d. random variables with $\E \sigma_1 = 0$, $\E|\sigma_1|^2 = 1$ and $$\sup_{p\ge1}\ p^{-1/2}\E[|\sigma_1|^p]^{1/p}\le C_\sigma,$$
for some $C_\sigma>0$. Furthermore, we assume that the components are symmetric in the sense that $\sigma_1 \overset{d}= -\sigma_1$.}
\end{assumption}

{Sub-Gaussian indicates the the tail of $u$ and $\xi$ are not heavy, so concentration inequalities can be applied \cite{RV13}. Note however that $N(0,\Gamma)$ is $(0,\Gamma)$-sub Gaussian. Uniform distributions $\mathcal{U}[(-a,a)]$ can also be sub-Gaussian.}

{Another important issue in inverse problems is the notion of dimension independence. Since most applications involve models of high or even infinite dimension, it is of interest to see if the parameter recovery depends only on some \emph{effective dimension} but not the ambient space dimension $d$. Here, the effective dimension is usually described by physical quantities in the inverse problem. For this paper, we assume the following:}
\begin{assumption}
\label{aspt:DI}
{
 $\tr(C_0),\|C_0\|_F$ and $\|A^\top\Gamma^{-1}A\|$, $\|A\|$, $\|\Gamma\|$ are  constants independent of the dimension $d$.}
\end{assumption}
{In the subsequent development, we will refer to constants that depend only on  $\tr(C_0),\|C_0\|_F, \|A^\top\Gamma^{-1}A\|$, $\|A\|$, $\|\Gamma\|$  as dimension independent (DI).}

{Roughly speaking, in order for  $\tr(C_0),\|C_0\|_F$ to be DI,  the spectrum of the prior covariance $C_0$ need to decay to zero quickly. By assuming  $\|A^\top\Gamma^{-1}A\|$, $\|A\|$, $\|\Gamma\|$ to be DI, each individual observation needs to be moderately precise. We do not have hard constraints on the number of observations, other than they are independent. All these conditions are practical and can be found in \cite{AMOS19,KS04}. We will also demonstrate they hold for our numerical example(s). %
}

Since the formulation of $u_\lambda$ involves the inversion of matrix $A^\top \Gamma^{-1}A+\lambda C^{-1}_0$,  such an operation may be unstable for $\lambda$ approaching $\infty$. When $\lambda$ approaches $\infty$, the gradient of $\Fhat_n$ approaches zero, so $\infty$ can be a stationary point that an optimization algorithm tries to converge to. To avoid these issues, we assume there are lower and upper bounds
such that  
\[
0<\lambda_l<\frac12\lambda_\ast<\frac32\lambda_\ast<\lambda_u,
\] 
where $\lambda_l$ can be chosen as a very small number and $\lambda_u$ can be very large.
Their values often  can be obtained from physical restrictions from  the inverse problem. By assuming their existence, we can restrict $\lambdahat_n$ to be in the interval $\Lambda=(\lambda_l,\lambda_u)$.  Now we are ready to present our main result for the offline recovery of regularization parameter. In particular, we show $\lambdahat_n$ converges to $\lambda_\ast$ with high probability.  

\begin{thm}
\label{thm:prob}
{Suppose that $u$ is $(0,\frac{1}{\lambda_\ast}C_0)$-sub Gaussian, $\xi$ is $(0,\Gamma)$-sub Gaussian, %and $\xi$ with covariance matrices $ \Cov(u) = \frac1{\lambda_\ast}C_0$ and  $\Cov(\xi) = \Gamma$ 
where $C_0\in\R^{d\times d}$, $\Gamma\in\R^{K\times K}$ are known symmetric positive definite matrices and $\lambda_\ast>0$ is unknown, and let} $\lambda_n^\ast\in (\lambda_l,\lambda_u)$ be a local minimum of $\Fhat_n$. {Then it holds true that $\partial_\lambda F(\lambda_\ast)=0$ and} there exist $C_\ast,c_\ast>0$ such that for any $1>\epsilon>0$  and $n$, 
\begin{align*}
\bbP(|\hat{\lambda}_n-\lambda_\ast|>\epsilon,\lambda_l<\lambdahat_n<\lambda_u)\leq C_\ast\exp(-c_\ast n\min(\epsilon,\epsilon^2)).
\end{align*}
The values of $C_\ast,c_\ast>0$ depend on $\lambda_l,\lambda_u,\lambda_\ast,C_0$ but not on $n$.   Moreover, if Assumption \ref{aspt:DI} is assumed, $C_\ast,c_\ast>0$ are also dimension independent. 
\end{thm}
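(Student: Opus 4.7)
The plan is to apply Proposition \ref{prop:cond_conv} with a deterministically chosen convex neighborhood $\mathcal D$ of $\lambda_\ast$, and then use concentration inequalities for sub-Gaussian quadratic forms to show that all the hypotheses of that proposition, as well as the quantitative probability bound, hold with exponentially high probability. First I would verify that $\partial_\lambda F(\lambda_\ast)=0$. Using the explicit identity
\[
u_\lambda(y) - u = M(\lambda)\bigl[-\lambda C_0^{-1} u + A^\top \Gamma^{-1} \xi\bigr], \qquad M(\lambda) := (A^\top\Gamma^{-1}A + \lambda C_0^{-1})^{-1},
\]
the loss $F(\lambda) = \E\|u_\lambda(Y) - U\|^2$ becomes an explicit trace expression involving $\E UU^\top = \lambda_\ast^{-1} C_0$ and $\E\xi\xi^\top = \Gamma$; differentiating and using $\partial_\lambda M(\lambda) = -M(\lambda)C_0^{-1}M(\lambda)$ yields $\partial_\lambda F(\lambda_\ast) = 0$ by direct algebraic cancellation. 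The same computation gives $\partial^2_\lambda F(\lambda_\ast)>0$, so by continuity there is an interval $\mathcal D = (\lambda_\ast - \delta,\lambda_\ast + \delta) \subset (\lambda_l,\lambda_u)$ on which $\partial^2_\lambda F \geq 2c_0$ for some $c_0>0$.

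The key step is a concentration analysis. Since $u_\lambda(y_i) - u_i$ is linear in the stacked sub-Gaussian vector $z_i = (u_i^\top, \xi_i^\top)^\top$, each of $\partial_\lambda \hat F_n(\lambda)$ and $\partial^2_\lambda \hat F_n(\lambda)$ is the empirical mean of a quadratic form $z_i^\top Q(\lambda) z_i$. A Hanson--Wright-type concentration inequality for sub-Gaussian quadratic forms as in \cite{RV13} then gives bounds of the shape
\[
\bbP\Bigl(\bigl|\tfrac{1}{n}\textstyle\sum_i z_i^\top Q z_i - \E z_1^\top Q z_1\bigr| > t\Bigr) \leq 2\exp\bigl(-c\, n\min(t^2/\|Q\|_F^2,\; t/\|Q\|)\bigr).
\]
Applied to $\partial_\lambda \hat F_n(\lambda_\ast) - \partial_\lambda F(\lambda_\ast)$ with $t = c_0\epsilon$ this gives the desired $C_\ast\exp(-c_\ast n\min(\epsilon,\epsilon^2))$ bound on the term appearing in Proposition \ref{prop:cond_conv}. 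Applied uniformly on $\mathcal D$ via a finite net in $\lambda$ plus a Lipschitz estimate (both exploiting $\partial_\lambda M = -MC_0^{-1}M$), the same tool shows that the second-derivative hypothesis $\partial^2_\lambda \hat F_n \geq c_0$ on $\mathcal D$ holds with exponentially high probability.

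To close the argument I would rule out $\hat\lambda_n \in (\lambda_l,\lambda_u)\setminus\mathcal D$. Since $\partial_\lambda F$ is continuous on the compact set $[\lambda_l,\lambda_u]\setminus\mathcal D$ and vanishes only at $\lambda_\ast\in\mathcal D$, it is uniformly bounded below there by some $\kappa>0$; another uniform Hanson--Wright bound for $\partial_\lambda\hat F_n(\lambda)$ shows that, with exponentially high probability, $\partial_\lambda \hat F_n$ does not vanish on $[\lambda_l,\lambda_u]\setminus\mathcal D$, hence no local minimum of $\hat F_n$ in $(\lambda_l,\lambda_u)$ can lie outside $\mathcal D$. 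A union bound over these three concentration events, combined with Proposition \ref{prop:cond_conv} applied on the remaining event, then delivers the theorem.

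The main obstacle is making all constants dimension independent. Hanson--Wright produces constants of the form $\|Q\|$ and $\|Q\|_F$ where $Q$ is built from products involving $M(\lambda)$, $C_0^{-1}$, $A^\top\Gamma^{-1}A$, and the $\xi$-block operator $A^\top\Gamma^{-1}$, and I would need to bound these by the DI quantities in Assumption \ref{aspt:DI} uniformly for $\lambda \in (\lambda_l,\lambda_u)$. The key trick is the resolvent identity $\lambda M(\lambda)C_0^{-1} = I - M(\lambda)A^\top\Gamma^{-1}A$, which trades the potentially high-dimensional operator $C_0^{-1}$ for the DI operator $A^\top\Gamma^{-1}A$ plus $\lambda^{-1}$ times the identity; this reduces every $Q$-norm of interest to an expression in $\tr(C_0)$, $\|C_0\|_F$, $\|A^\top\Gamma^{-1}A\|$, $\|A\|$, $\|\Gamma\|$, and $\lambda_l^{-1}$.
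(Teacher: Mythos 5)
Your proposal follows essentially the same route as the paper's proof: invoke Proposition \ref{prop:cond_conv} on a neighborhood $\mathcal D$ of $\lambda_\ast$, use Hanson--Wright concentration for the sub-Gaussian quadratic forms appearing in $\partial_\lambda \Fhat_n$ and $\partial^2_\lambda \Fhat_n$, upgrade pointwise bounds to uniform ones via a net-plus-Lipschitz (chaining) argument, rule out stationary points in $(\lambda_l,\lambda_u)\setminus\mathcal D$ by lower-bounding $|\partial_\lambda F|$ there, and close with a union bound. The only substantive difference is that where you appeal to continuity and compactness for the lower bounds on $\partial^2_\lambda F$ and $|\partial_\lambda F|$, the paper computes explicit constants $H_\ast$ and $L_\ast$ via an eigenvalue argument on $DD^\top$ --- which is exactly what is needed to make the dimension-independence claim rigorous, as you correctly anticipate in your final paragraph.
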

Since we can obtain consistency assuming that $\lambdahat_n$ is a local minimum, we do not demonstrate how to implement Theorem \ref{thm:Bickel} for the more restrictive scenario where $\lambdahat_n$ is a global minimum.

{In general, knowing that $\lambdahat_n$ is an accurate estimator is sufficient to guarantee the recovery is accurate, also due to the Lipschitzness of $u_\lambda$. In particular, we can also show the Lipschitz constant is DI: }
\begin{prop}
\label{prop:Lip}
{Under Assumption \ref{aspt:DI}, there is a dimension independent $L$ so that 
\[
\E_y\|u_\lambda(y)-u_{\lambda_\ast}(y)\|^2\leq L |\lambda-\lambda_\ast|^2. 
\]
Here $y$ is a random observation sample.}
\end{prop}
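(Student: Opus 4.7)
The plan is to derive a closed-form, manifestly linear-in-$(\lambda-\lambda_\ast)$ expression for $u_\lambda(y)-u_{\lambda_\ast}(y)$, and then to control its second moment by a cascade of positive-semidefinite (PSD) inequalities arranged so that no ill-conditioned quantity enters the final constant.

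First, I would apply the push-through (matrix-inversion lemma) identity to rewrite
\[
u_\lambda(y) = C_0 A^\top K_\lambda^{-1} y,\qquad K_\lambda := A C_0 A^\top + \lambda\Gamma,
\]
so that the whole $\lambda$-dependence sits in the bounded-spectrum resolvent $K_\lambda^{-1}$. The first resolvent identity then yields
\[
u_\lambda(y)-u_{\lambda_\ast}(y) = (\lambda_\ast-\lambda)\,C_0 A^\top K_\lambda^{-1}\Gamma K_{\lambda_\ast}^{-1} y.
\]
Taking the second moment over $y$ and using Assumption~\ref{aspt:subGaussian} with $u$ and $\xi$ independent gives $\E yy^\top = \lambda_\ast^{-1}K_{\lambda_\ast}$, so the outer $K_{\lambda_\ast}^{-1}$ factors collapse and
\[
\E_y\|u_\lambda(y)-u_{\lambda_\ast}(y)\|^2 = \frac{(\lambda-\lambda_\ast)^2}{\lambda_\ast}\,\tr\bigl(C_0A^\top K_\lambda^{-1}\Gamma K_{\lambda_\ast}^{-1}\Gamma K_\lambda^{-1}A C_0\bigr).
\]

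The final step is a cascade of PSD monotonicity bounds. The two workhorses are $K_\mu\succeq\mu\Gamma$, which gives $K_\mu^{-1}\preceq\mu^{-1}\Gamma^{-1}$ and hence $\Gamma K_\mu^{-1}\Gamma\preceq\mu^{-1}\Gamma$, together with the spectral similarity $\|K_\lambda^{-1/2}\Gamma K_\lambda^{-1/2}\|\le\lambda^{-1}$. Applied in the order (i) contract the central $\Gamma K_{\lambda_\ast}^{-1}\Gamma$ by $\lambda_\ast^{-1}\Gamma$, (ii) collapse $K_\lambda^{-1}\Gamma K_\lambda^{-1}$ to $\lambda^{-1}K_\lambda^{-1}$ via the spectral bound, and (iii) apply $K_\lambda^{-1}\preceq\lambda^{-1}\Gamma^{-1}$ once more, the trace reduces to $(\lambda_\ast\lambda^2)^{-1}\tr(A^\top\Gamma^{-1}A\,C_0^2)$. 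Using $\tr(XY)\le\|X\|\tr(Y)$ for PSD $X,Y$ and $\lambda\ge\lambda_l$ then delivers
\[
\E_y\|u_\lambda(y)-u_{\lambda_\ast}(y)\|^2 \le \frac{\|A^\top\Gamma^{-1}A\|\,\|C_0\|_F^2}{\lambda_\ast^2\,\lambda_l^2}\,(\lambda-\lambda_\ast)^2,
\]
so $L$ can be taken as the displayed prefactor, which is dimension independent by Assumption~\ref{aspt:DI}.

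The main obstacle is precisely the DI requirement. Any naive estimate that lets $\|K_\lambda^{-1}\|$ or $\|\Gamma^{-1}\|$ appear in the final bound fails, since these can grow with the observation/ambient dimension. The trick is to maintain $\Gamma$ factors in the outer expression so that each application of $K^{-1}\preceq\mu^{-1}\Gamma^{-1}$ is immediately absorbed by an adjacent $\Gamma$; this matching is possible exactly because the identity $\E yy^\top=\lambda_\ast^{-1}K_{\lambda_\ast}$ supplies the two $\Gamma$ factors in the right positions, rather than a mere operator upper bound.
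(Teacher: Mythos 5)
Your proof is correct, and it reaches the same destination as the paper by a noticeably different algebraic route. The paper factors $u_\lambda$ through the \emph{whitened} resolvent $Q_\lambda=(DD^\top+\lambda I)^{-1}$ with $D=C_0^{1/2}A^\top\Gamma^{-1/2}$, applies the mean value theorem to get $\|Q_\lambda-Q_{\lambda_\ast}\|\le|\lambda-\lambda_\ast|\,\|Q_w^2\|\le|\lambda-\lambda_\ast|/\lambda_l^2$, and then traces against $\E yy^\top=AC_0A^\top/\lambda_\ast+\Gamma$ to obtain the constant $\lambda_l^{-4}\|AC_0A^\top/\lambda_\ast+\Gamma\|\Tr(C_0)$; the identity shift in $Q_\lambda$ makes the resolvent norm bound trivial, so no care is needed to avoid $\|\Gamma^{-1}\|$. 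You instead keep the observation-space resolvent $K_\lambda^{-1}=(AC_0A^\top+\lambda\Gamma)^{-1}$ via the push-through identity, replace the mean value theorem by the exact first resolvent identity (which makes the linearity in $\lambda_\ast-\lambda$ manifest rather than approximate), and then need the PSD cascade $\Gamma K_\mu^{-1}\Gamma\preceq\mu^{-1}\Gamma$, $K_\lambda^{-1}\Gamma K_\lambda^{-1}\preceq\lambda^{-1}K_\lambda^{-1}$, $K_\lambda^{-1}\preceq\lambda^{-1}\Gamma^{-1}$ precisely because the un-whitened resolvent does not admit a dimension-free operator norm bound on its own; your observation that the two $\Gamma$ factors supplied by $\E yy^\top=\lambda_\ast^{-1}K_{\lambda_\ast}$ are what make this absorption possible is the real content of your variant. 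Both arguments use the same two ingredients (resolvent perturbation plus the exact second moment of $y$) and both constants are dimension independent under Assumption \ref{aspt:DI}; yours, $\|A^\top\Gamma^{-1}A\|\,\|C_0\|_F^2/(\lambda_\ast^2\lambda_l^2)$, trades $\Tr(C_0)$ for $\|C_0\|_F^2$ and is arguably cleaner since it never references $\|AC_0A^\top\|$. The only points worth making explicit are that you use the independence of $u$ and $\xi$ to compute $\E yy^\top$, and that the final step $\lambda\ge\lambda_l$ implicitly restricts $\lambda$ to the admissible range $\Lambda$ --- both of which the paper's own proof also assumes without comment.
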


\begin{remark}
{In practical settings it is assumed that the noise is not fully known and we can easily extend our results by changing $\Gamma \mapsto\frac1\gamma \Gamma$. We then would try to choose the ration between regularization parameter and noise scale, this is, we can change $\lambda\mapsto \frac{\lambda}{\gamma}$ and apply again the bilevel optimization approach with ''known" noise covariance $\Gamma$.}
\end{remark}

\section{Regularization parameter online recovery}
\label{sec:reg}
In this section we discuss how to implement the stochastic gradient descent(SGD) method for online solutions of the bilevel optimization. We will formulate the SGD method for general nonlinear inverse problems and state certain assumptions on the forward model and the regularization function to ensure convergence of the proposed method.

\subsection{Bilevel stochastic gradient descent method}
In the offline solution of the bilevel optimization problem \eqref{eq:bi-level_empirical},
one has to compute the empirical loss function $\Fhat_n$ or its gradient in \eqref{eq:empirical_loss}. This involves
solving the lower level problem for each training data point $(u^{(j)},y^{(j)})$, $j=1,\dots,n$. When $n$ is very large, this can be  very computationally demanding.  One way to alleviate this is to use the stochastic gradient descent (SGD). This has been done in the context of traditional optimization \cite{CW18}, where various convergence results were shown. As a result this has been applied to problems in machine learning, most notably support vector machines \cite{CW14,CW17}, but also in a more general context without the use of SGD \cite{FFS18,JF18}.
Here we propose a SGD method to solve the bilevel optimization problem \eqref{eq:bi-level_opt} online.

{To formulate the SGD method, we first note that the gradient descent method generates iterates $\lambda_{k+1}$ based on the following update rules
\[
\lambda_{k+1}=\lambda_k-\beta_k \partial_\lambda F(\lambda_k),
\]
where $\beta_k$ is a sequence of stepsizes.} 

As mentioned above, the population gradient $\partial_\lambda F$ is often computationally inaccessible, and its empirical approximation $\partial_\lambda \Fhat_n$ is often expensive to compute. One general solution to this issue is using a stochastic approximation of $\partial_\lambda F$. Here we choose $\partial_\lambda f(\lambda_k, Z^{(k)})$, since it is an unbiased estimator of $\partial_\lambda F$:
\[
\partial_\lambda F(\lambda_k)=\E_{Z} \partial_\lambda f(\lambda_k,Z).
\]
The identity above holds by Fubini's theorem, since we assume $f$ and its second order  derivatives are all continuous and differentiable.  Comparing with $\partial_\lambda \Fhat_n$, $\partial_\lambda f$ involves only one data point $Z^{(k)}$, so it has a significantly smaller computation cost. We refer to this method as ``online", since it does not require all $n$ data points available at each algorithmic iteration.

We formulate the stochastic gradient descent method to solve \eqref{eq:bi-level_opt} as Algorithm \ref{alg:SGD}.

\begin{algorithm}[H]
\caption{Bilevel Stochastic Gradient Descent}
\label{alg:SGD}
\begin{algorithmic}[1]
\State Input: $\lambda_0$, {$m,$} $\beta = (\beta_k)_{k=1}^{n}$, $\beta_k>0$, i.i.d. sample $(Z^{(k)})_{k\in\{1,\dots,n\}}\sim\mu_{(U,Y)}$.
\For{$k = 0,\dots,n-1$} 
\begin{equation}\label{eq:SGD_general}
\lambda_{k+1} = \chi(\lambda_k -\beta_k \partial_\lambda f(\lambda_k,Z^{(k)})),
\end{equation}
\EndFor
\State {Output: the average $\bar{\lambda}_n=\frac{1}{m}\sum_{k=n-m+1}^n \lambda_k$}
\end{algorithmic}
\end{algorithm}
In Algorithm \ref{alg:SGD},
the step size $\beta_k$ is a sequence decreasing to zero not too fast, so that the Robbins--Monro conditions \cite{RM51} apply:
\begin{equation}\label{eq:RM_cond}
\sum\limits_{k=1}^\infty \beta_k = \infty,\quad \sum\limits_{k=1}^\infty \beta_k^2 <\infty.
\end{equation}
One standard choice is to take a decreasing step size $\beta_k=\beta_0 k^{-\alpha}$ with $\alpha\in (1/2,1]$. {We note that the output of our bilevel SGD method is given by the average over the last iterations $\bar\lambda_n$, which has been shown to accelerate the scheme for standard SGD methods, see \cite{PJ92}}. The projection map $\chi$ (\cite{SB14} Section 14.4.1) is defined as
\[
\chi(\lambda)=\arg\min_{\theta\in \Lambda}\{\|\theta-\lambda\|\}.
\]
In other words, it maps $\lambda$ to itself if $\lambda\in \Lambda$, otherwise it outputs the point in $\Lambda$ that is closest to $\lambda$.  Using $\chi$  ensures $\lambda_{k+1}$ is still in the range of regularization parameter if $\Lambda$ is closed. This operation in general shorten the distance between  $\lambda_{k+1}$ and $\lambda_\ast$ when $\Lambda$ is convex:

\begin{lem}[Lemma 14.9 of \cite{SB14}]
\label{lem:convball}
If $\Lambda$ is convex, then for any $\lambda$ 
\[
\|\chi(\lambda)-\lambda_\ast\|\leq\|\lambda-\lambda_\ast\|.
\]
\end{lem}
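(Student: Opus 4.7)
The plan is to prove this by the standard projection-onto-a-convex-set argument, exploiting that $\chi(\lambda)$ is the minimizer of $\theta\mapsto \|\theta-\lambda\|$ over the convex set $\Lambda$, and that $\lambda_\ast\in\Lambda$ by hypothesis.

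First I would establish the variational (obtuse-angle) characterization of the projection: for every $\theta\in\Lambda$,
\begin{equation*}
\langle \lambda-\chi(\lambda),\,\theta-\chi(\lambda)\rangle\leq 0.
\end{equation*}
This is proved by taking $\theta_t=(1-t)\chi(\lambda)+t\theta\in\Lambda$ for $t\in[0,1]$ (which lies in $\Lambda$ by convexity), noting that $\phi(t):=\|\theta_t-\lambda\|^2$ attains its minimum at $t=0$, and computing $\phi'(0)\geq 0$. This gives $2\langle \chi(\lambda)-\lambda,\,\theta-\chi(\lambda)\rangle\geq 0$, which is the desired inequality.

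Next I would apply this inequality at the admissible point $\theta=\lambda_\ast$ (admissible because $\lambda_\ast\in(\lambda_l,\lambda_u)=\Lambda$). Writing $\lambda-\lambda_\ast=(\lambda-\chi(\lambda))+(\chi(\lambda)-\lambda_\ast)$ and expanding,
\begin{equation*}
\|\lambda-\lambda_\ast\|^2=\|\lambda-\chi(\lambda)\|^2+2\langle \lambda-\chi(\lambda),\,\chi(\lambda)-\lambda_\ast\rangle+\|\chi(\lambda)-\lambda_\ast\|^2.
\end{equation*}
The middle inner product is nonnegative by the variational characterization (since $\langle \lambda-\chi(\lambda),\,\lambda_\ast-\chi(\lambda)\rangle\leq 0$). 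Dropping the nonnegative terms $\|\lambda-\chi(\lambda)\|^2$ and the cross term yields $\|\chi(\lambda)-\lambda_\ast\|^2\leq\|\lambda-\lambda_\ast\|^2$, as desired.

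There is essentially no hard step here; the one point that needs a line of justification is the variational inequality for the projection onto a convex set, which is just first-order optimality for a smooth constrained minimization along a line segment. The argument uses only convexity of $\Lambda$ and membership $\lambda_\ast\in\Lambda$, so it applies directly in the one-dimensional setting $\Lambda=(\lambda_l,\lambda_u)$ used in the paper (and in fact in any Hilbert space).
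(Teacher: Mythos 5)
Your proof is correct; it is the standard non-expansiveness argument for projection onto a convex set (first-order optimality along the segment from $\chi(\lambda)$ to $\lambda_\ast$, then expanding the square). The paper itself gives no proof of this lemma --- it is simply quoted as Lemma 14.9 of the cited textbook of Shalev-Shwartz and Ben-David --- so there is nothing to compare against; your argument is exactly the textbook one. The only point worth flagging is that for $\chi$ to be well defined as a minimizer you need $\Lambda$ to be closed (the paper states $\Lambda=(\lambda_l,\lambda_u)$ open in places but uses $[\lambda_l,\lambda_u]$ where $\chi$ is actually invoked), and your argument then goes through verbatim since $\lambda_\ast\in\Lambda$.
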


In particular, the stochastic gradient $\partial_\lambda f(\lambda_k,Z^{(k)})$ is given by the following lemma, which states sufficient conditions on $\Psi$ to ensure both $u_\lambda$ and $f$ are continuously differentiable w.r.t. $\lambda$.

\begin{lem}
\label{lem:grad_reg}
Suppose the lower level loss function $\Psi(\lambda,u,y)$
{is $C^2$ for $(u,\lambda)$ in a neighborhood of $(u_{\lambda_0},\lambda_0)$ and is strictly convex in $u$ in this neighborhood}, then the function $\lambda\mapsto u_\lambda(y)$ is continuously differentiable w.r.t. $\lambda$ near $\lambda_0$ and the derivative is given by
\begin{equation}\label{eq:tik_grad}
\partial_\lambda u_\lambda(y) = -\left(\nabla_{u}^2 \left[\Psi(\lambda,u_\lambda(y),y)\right]\right)^{-1}\partial_{\lambda u}^2\left[\Psi(\lambda,u_\lambda(y),y)\right].
\end{equation}
and
\begin{equation}\label{eq:bi-level_grad}
\partial_\lambda f(\lambda,y,u)=\partial_w\mathcal{L}_{\mathcal{U}}(u_\lambda(y),u)^T \partial_\lambda u_\lambda(y).
\end{equation}
\end{lem}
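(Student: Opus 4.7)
The plan is to invoke the implicit function theorem on the first-order optimality condition for the lower level problem. Since $u_\lambda(y) = \argmin_u \Psi(\lambda, u, y)$ and $\Psi$ is $C^2$ and strictly convex in $u$ in a neighborhood of $(u_{\lambda_0}, \lambda_0)$, the minimizer is characterized by
\ban
\nabla_u \Psi(\lambda, u_\lambda(y), y) = 0
\enan
locally. Define $\Phi(\lambda, u) := \nabla_u \Psi(\lambda, u, y)$ (treating $y$ as fixed). Then $\Phi$ is $C^1$, $\Phi(\lambda_0, u_{\lambda_0}(y)) = 0$, and the Jacobian $\partial_u \Phi(\lambda_0, u_{\lambda_0}(y)) = \nabla_u^2 \Psi(\lambda_0, u_{\lambda_0}(y), y)$ is symmetric positive definite by strict convexity, hence invertible.

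Given this, the implicit function theorem yields a unique $C^1$ function $\lambda \mapsto u_\lambda(y)$ defined in a neighborhood of $\lambda_0$ such that $\Phi(\lambda, u_\lambda(y)) = 0$. Differentiating this identity in $\lambda$ via the chain rule gives
\ban
\partial_{\lambda u}^2 \Psi(\lambda, u_\lambda(y), y) + \nabla_u^2 \Psi(\lambda, u_\lambda(y), y)\, \partial_\lambda u_\lambda(y) = 0,
\enan
which, after inverting the Hessian in $u$, produces exactly formula \eqref{eq:tik_grad}. The continuity of $\partial_\lambda u_\lambda$ follows from the continuity of all second derivatives of $\Psi$ and the continuity of matrix inversion on invertible matrices.

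For the second formula, I would simply apply the chain rule to $f(\lambda, y, u) = \mathcal{L}_{\mathcal{U}}(u_\lambda(y), u)$: since $u$ and $y$ do not depend on $\lambda$, only the first argument of $\mathcal{L}_{\mathcal{U}}$ contributes, giving
\ban
\partial_\lambda f(\lambda, y, u) = \partial_w \mathcal{L}_{\mathcal{U}}(u_\lambda(y), u)^T \partial_\lambda u_\lambda(y),
\enan
which is \eqref{eq:bi-level_grad}.

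The only subtlety here is verifying that the minimizer $u_\lambda(y)$ produced by the implicit function theorem in a neighborhood of $\lambda_0$ coincides with the global minimizer (which is how $u_\lambda(y)$ is defined in \eqref{eq:bi-level_opt}); however, since the hypotheses only require local strict convexity near $(u_{\lambda_0}, \lambda_0)$, the statement is really about the branch of critical points emanating from $(\lambda_0, u_{\lambda_0}(y))$, so no extra argument is needed beyond noting that $u_{\lambda_0}(y)$ lies in the region of strict convexity by assumption. Overall the proof is essentially a direct application of the implicit function theorem, so I do not anticipate a major obstacle beyond cleanly stating the local versus global issue.
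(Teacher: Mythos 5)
Your proposal is correct and follows essentially the same route as the paper's proof: both apply the implicit function theorem to the first-order optimality condition $\nabla_u\Psi(\lambda,u,y)=0$, use strict convexity to guarantee invertibility of the Hessian $\nabla_u^2\Psi$, and obtain \eqref{eq:bi-level_grad} by the chain rule. Your added remark on reconciling the local branch of critical points with the global minimizer is a reasonable clarification that the paper leaves implicit.
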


\subsection{Approximate stochastic gradient method}
In order to implement Algorithm \ref{alg:SGD}, it is necessary to evaluate the gradient $\partial_\lambda f$. While 
Lemma \ref{lem:grad_reg} provides a formula to compute the gradient, its evaluation can be expensive for complicated PDE forward models. In these scenarios, it is more reasonable to implement approximate SGD. 

One general way to find approximate gradient is applying central finite difference schemes. This involves perturbing certain coordinates in opposite direction, and use the value difference to approximate the gradient:
\begin{equation}
\label{eq:genaGD}
\widetilde\partial_\lambda f(\lambda_k,z)\approx
\frac{f(\lambda_k+h_k,z)-f(\lambda_k-h_k,z)}{2h_k},
\end{equation}
where $h_k$ is a step size. The step size $h_k$ can either be fixed as a small constant, or it can be decaying as $k$ increases, so that higher accuracy gradients are used when the iterates are converging.

In many cases, the higher level optimization uses a $L_2$ loss function
\begin{equation*}
\mathcal L_{\mathcal U}(y,u) = \|y-u\|^2.
\end{equation*}
The exact SGD update step \eqref{eq:SGD_general} can be written as
\begin{align*}
\lambda_{k+1} 	&= \lambda_k -\beta_k \partial_{\lambda}\|u_{\lambda_k}(y^{(k)})-u^{(k)}\|^2\\
							&= \lambda_k -\beta_k \left(\partial_{\lambda}u_{\lambda_k}(y^{(k)})\right)^\top (u_{\lambda_k}(y^{(k)})-u^{(k)}).
\end{align*}
In this case, it makes more sense to apply central difference scheme only on the $\partial u_\lambda$ part:
\begin{equation}\label{eq:grad_approx}
{\partial_\lambda u_\lambda(y^{(k)}) \approx \frac{ u_{\lambda+{h_k}}(y^{(k)})-u_{\lambda-{h_k}}(y^{(k)})}{2h_k}=: \widetilde\partial_\lambda u_\lambda(y^{(k)}).}
\end{equation}
Using this approximation, we formulate the approximate SGD method in the following algorithm, where we replace the exact gradient $\partial_\lambda u_\lambda(y^{(k)})$ by the numerical approximation $\widetilde\partial_\lambda u_\lambda(y^{(k)})$ defined in \eqref{eq:grad_approx}.

Here we have defined the numerical approximation of $\partial_\lambda f$ by
\begin{equation}
\label{eq:L2aGD}
\widetilde\partial_\lambda f(\lambda,(y,u)):=\left(\widetilde\partial_{\lambda}u_{\lambda}(y)\right)^\top (u_{\lambda}(y)-u).
\end{equation}
{In most finite difference approximation schemes, the approximation error involved is often controlled by $h_k$. In particular, we assume the centred forward difference scheme used in either \eqref{eq:genaGD} or \eqref{eq:L2aGD} yields an error of order
\[
\|\E \widetilde\partial_\lambda(f(\lambda,Y,U))-\partial_\lambda F(\lambda)\|=:\alpha_k=O(h^2_k).
\]
Replacing the stochastic gradient in Algorithm \ref{alg:SGD} with its approximation, we obtain the algorithm below:
}

\begin{algorithm}[H]
\caption{Approximate Bilevel Stochastic Gradient Descent}
\label{alg:aBSGD}
\begin{algorithmic}[1]
\State Input: $\lambda_0$, $m,$ $\beta = (\beta_k)_{k=1}^{n}$, $\beta_k>0$, i.i.d. sample $(Z^{(k)})_{k\in\{1,\dots,n\}}\sim\mu_{(U,Y)}$.
\For{$k = 0,\dots,n-1$} 
\begin{equation}\label{eq:SGD_empirical_approx}
\lambda_{k+1} =  \chi(\lambda_k -\beta_k \widetilde\partial_\lambda f(\lambda_k,Z^{(k)})),
\end{equation}
\EndFor
\State {Output: the average $\bar{\lambda}_n=\frac{1}{m}\sum_{k=n-m+1}^n \lambda_k$}
\end{algorithmic}
\end{algorithm}

\subsection{Consistency analysis for online estimators}
Next we formulate sufficient conditions that can ensure that $\lambda_{k}$ converges in $L^2$ to the optimal solution $\lambda_{\ast}$ of \eqref{eq:bi-level_opt}.
\begin{prop}\label{prop:gensgd}
Suppose that there is a convex region $\mathcal{D}\subset \Lambda$ and a constant $c>0$ such that 
\begin{equation}
\label{eqn:A2}
\inf_{\lambda\in \mathcal{D}}\ (\lambda-\lambda_\ast) \partial_\lambda F(\lambda) >c\|\lambda-\lambda_\ast\|^2.
\end{equation}
and there are constants $a,\ b>0$ such that for all $\lambda\in \mathcal{D}$ it holds true that 
\begin{equation}
\label{eqn:A4}
\E[|\widetilde{\partial}_\lambda f(\lambda,Z)|^2] < a+b\|\lambda-\lambda_\ast\|^2.
\end{equation}
Also the bias in the approximated SGD is bounded by 
\begin{equation}
\label{eqn:A3}
\|\E \widetilde{\partial}_\lambda f(\lambda_k,Z_k)-\partial_\lambda F(\lambda_k)\|^2\leq \alpha_k. 
\end{equation}
Let $\mathcal{A}_n$  be the event that $\lambda_k\in \mathcal{D}$. Suppose $\beta_0\leq \frac{c}{b}$. Then if the approximation error is bounded by a small constant $\alpha_k\leq \alpha_0$, there is a constant $C_n$ such that 
\[
\E 1_{\calA_n}\|\lambda_n-\lambda_\ast\|^2
\leq  \left(\E Q_0+2a\sum_{j=1}^\infty \beta_j^2\right)C_n+\frac{\alpha_0}{c^2}.
\]
Here 
\begin{equation}
\label{eqn:Cn}
C_n=\min_{k\leq n}\max\left\{\prod_{j=k+1}^n(1-c\beta_j), a\beta_k/c\right\} 
\end{equation}
is a sequence converging to zero. 

If the approximation error is decaying so that $\alpha_k\leq D\beta_k$, then we have the estimation error
\[
\E 1_{\calA_n}\|\lambda_n-\lambda_\ast\|^2
\leq \left(\E Q_0+2(a+D/c)\sum_{j=1}^\infty \beta_j^2\right)C_n.
\]
\end{prop}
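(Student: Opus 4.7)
The plan is the standard SGD contraction argument applied to $Q_k := \|\lambda_k - \lambda_\ast\|^2$, with extra care for the bias of $\widetilde{\partial}_\lambda f$ and for the indicator $1_{\calA_n}$. Let $\F_k := \sigma(Z^{(1)},\dots,Z^{(k-1)})$ and, interpreting $\calA_n$ as $\{\lambda_0,\dots,\lambda_n\in\mathcal{D}\}$, set $\calA_k := \{\lambda_0,\dots,\lambda_k\in\mathcal{D}\}$, so that $\calA_n\subseteq\calA_k$ for $k\leq n$.

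Since $\lambda_\ast\in\mathcal{D}\subset\Lambda$ and $\Lambda$ is convex, Lemma \ref{lem:convball} disposes of the projection: $Q_{k+1}\leq\|\lambda_k-\lambda_\ast-\beta_k\widetilde{\partial}_\lambda f(\lambda_k,Z^{(k)})\|^2$. Expanding the square and taking conditional expectation, I split the stochastic gradient as $\E[\widetilde{\partial}_\lambda f(\lambda_k,Z^{(k)})\mid\F_k] = \partial_\lambda F(\lambda_k) + e_k$ with $\|e_k\|^2\leq\alpha_k$ by \eqref{eqn:A3}. On the event $\{\lambda_k\in\mathcal{D}\}$ I would then combine: \eqref{eqn:A2} yielding $-2\beta_k(\lambda_k-\lambda_\ast)\partial_\lambda F(\lambda_k)\leq -2c\beta_k Q_k$; Young's inequality with weight $\eta$ on the bias cross-term giving $-2\beta_k(\lambda_k-\lambda_\ast)e_k\leq\eta\beta_k Q_k+\beta_k\alpha_k/\eta$; and \eqref{eqn:A4} giving $\beta_k^2\E[|\widetilde{\partial}_\lambda f|^2\mid\F_k]\leq\beta_k^2(a+bQ_k)$. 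Choosing $\eta$ close to $c$ and invoking $\beta_0\leq c/b$ in the form $\eta+b\beta_k\leq c$ yields
\[ \E[Q_{k+1}\mid\F_k] \leq (1-c\beta_k)Q_k + a\beta_k^2 + \beta_k\alpha_k/c \qquad\text{on } \{\lambda_k\in\mathcal{D}\}. \]

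Next, setting $R_k := \E[Q_k\,1_{\calA_k}]$, multiplying by $1_{\calA_k}\in\F_k$ and taking expectation (and using $\calA_{k+1}\subseteq\calA_k$) converts the above into the deterministic recursion $R_{k+1}\leq (1-c\beta_k)R_k + a\beta_k^2 + \beta_k\alpha_k/c$, which unrolls to
\[ R_n \leq \prod_{j=1}^n(1-c\beta_j)\E Q_0 + \sum_{i=1}^n\prod_{j=i+1}^n(1-c\beta_j)\bigl(a\beta_i^2+\beta_i\alpha_i/c\bigr), \]
and the left-hand side equals $\E[Q_n 1_{\calA_n}]$. The telescoping identity $c\beta_i\prod_{j=i+1}^n(1-c\beta_j)=\prod_{j=i+1}^n(1-c\beta_j)-\prod_{j=i}^n(1-c\beta_j)$ gives $\sum_{i=1}^n\beta_i\prod_{j=i+1}^n(1-c\beta_j)\leq 1/c$. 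For the first claim this bounds the bias contribution by $\alpha_0/c^2$. For the variance contribution $\sum a\beta_i^2\prod_{j=i+1}^n(1-c\beta_j)$, I split at an arbitrary index $k$: the $i\leq k$ part is at most $\prod_{j=k+1}^n(1-c\beta_j)\cdot a\sum_{i=1}^\infty\beta_i^2$, and the $i>k$ part is at most $a\beta_k/c$ (using $\beta_i\leq\beta_k$ and the same telescoping). Bounding the initial-error term $\prod_{j=1}^n(1-c\beta_j)\E Q_0$ by the first of these and then taking $\min_k\max\{\cdot,\cdot\}$ produces the factor $C_n$. For the second claim, $\alpha_k\leq D\beta_k$ turns the bias contribution into $D\beta_i^2/c$, which merges into the variance sum (replacing $a$ by $a+D/c$) and removes the additive $\alpha_0/c^2$ term; the factor $2$ arises when the bounded sum-plus-telescoping pieces are combined as twice the maximum.

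The main obstacle is the coupling between the projection, the indicator $1_{\calA_n}$, and the conditional bounds that apply only when $\lambda_k\in\mathcal{D}$: tracking the truncated $R_k$ rather than $\E[Q_k 1_{\calA_n}]$ directly, and invoking $\calA_n\subseteq\calA_k$ only at the final step, sidesteps this cleanly. A secondary bookkeeping point is to align the Young's weight $\eta$ with the step-size bound $\beta_0\leq c/b$ so that the surviving coefficient is exactly $(1-c\beta_j)$; a small slack (e.g. taking the inequality to be strict, or replacing $c$ by $c-\varepsilon$) handles this without structural change.
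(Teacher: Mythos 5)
Your proposal is correct and follows essentially the same route as the paper's proof: Lemma \ref{lem:convball} to absorb the projection, a bias--variance split of $\widetilde{\partial}_\lambda f$, a one-step conditional contraction combining \eqref{eqn:A2}, Young's inequality on the bias cross-term and \eqref{eqn:A4}, truncation by the indicator of staying in $\mathcal{D}$ (the paper phrases this via a first exit time, you via the nested events $\calA_k$ — the same device), Gronwall unrolling, the telescoping bound $\sum_i \beta_i\prod_{j>i}(1-c\beta_j)\le 1/c$ for the bias term, and the split-at-$k_0$ argument producing $C_n$. The small slack you flag between the Young weight and the step-size condition $\beta_0\le c/b$ is present in the paper's own derivation as well, so nothing further is needed.
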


{
\begin{remark}
We note that the above result also leads to similar convergence of the average estimator $\bar \lambda_n$ since by Jensen's inequality
{
\[
\|\bar{\lambda}_n-\lambda_\ast\|^2\leq \frac{1}{m}\sum_{k=n-m+1}^n
\|\lambda_k-\lambda_\ast\|^2.
\] }
Further, for standard SGD methods the averaging step has been shown to lead to the highest possible convergence rate under suitable assumptions. Interested readers can refer to \cite{PJ92} for more details.
\end{remark}
}

\subsection{Consistency analysis with linear inverse problem}

{We consider again the linear inverse problem from Section~\ref{sec:linear_Tikh}
\begin{equation*}
y = Au+\xi,
\end{equation*}
but we do not state specific assumptions on the distribution without $\E[|u|^4]<\infty$ and $\E[|A^\top\Gamma^{-1}\xi|^4]<\infty$.}

{We formulate the online convergence for the corresponding bilevel optimization with least-square data misfit and Tikhonov regularization, i.e.
\begin{equation*}
\mathcal L_{\mathcal Y}(Au,y) = \frac12\|Au-y\|_{\Gamma}^2, \quad S_\lambda(u) =\frac{\lambda}{2}\|u\|_{C_0}^2.
\end{equation*}
}

\begin{thm}
\label{thm:linSGD}
Let $\beta=(\beta_k)_{k\in\N}$ be a sequence of step sizes with $\beta_k>0$, $\sum\limits_{k=1}^\infty \beta_k=\infty,$ and $\sum\limits_{k=1}^\infty \beta_k^2<\infty.$ {Furthermore, let $\E[|u|^4]<\infty$ and $\E[|A^\top\Gamma^{-1}\xi|^4]<\infty$.}
Then for some constant $B$ and a sequence $C_n$ converging to zero, the following hold
\begin{enumerate}
\item the iterates generated from the exact SGD, Algorithm~\ref{alg:SGD}, converge to $\lambda_\ast$ in the sense
\[
\E\|\lambda_n-\lambda_\ast\|^2\leq B C_n,
\]
\item the iterates generated from the aproximate SGD, Algorithm~\ref{alg:aBSGD} with formula \eqref{eq:L2aGD} and $h_k=h$, converge to $\lambda_\ast$ up to an error of order $\mathcal{O}(h^4)$, i.e.
\[
\E\|\lambda_n-\lambda_\ast\|^2\leq B(C_n+h^4).
\]
If we use decaying finite difference stepsize $h_k\leq h\beta^{1/4}_k$, then the error can be further bounded by 
\[
\E\|\lambda_n-\lambda_\ast\|^2\leq B C_n.
\]
\end{enumerate}
Moreover, if Assumption \ref{aspt:DI} is assumed, the constants $B,C_n$ are  dimension independent.  

\end{thm}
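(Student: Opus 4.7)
The plan is to apply Proposition \ref{prop:gensgd} (and handle both the exact and approximate versions in its two regimes). For the linear, quadratic lower-level problem, the regularized solution has the explicit form $u_\lambda(y)=M_\lambda^{-1}A^\top\Gamma^{-1}y$ with $M_\lambda:=A^\top\Gamma^{-1}A+\lambda C_0^{-1}$, and Lemma~\ref{lem:grad_reg} gives the stochastic gradient
\[
\partial_\lambda f(\lambda,U,Y)=-2\,(u_\lambda(Y)-U)^\top M_\lambda^{-1}C_0^{-1}u_\lambda(Y),
\]
together with the noise-and-signal decomposition $u_\lambda(Y)-U=M_\lambda^{-1}(A^\top\Gamma^{-1}\xi-\lambda C_0^{-1}U)$. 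I intend to take $\mathcal D=\Lambda=(\lambda_l,\lambda_u)$, so that the projection $\chi$ in Algorithms~\ref{alg:SGD} and \ref{alg:aBSGD} guarantees $\calA_n$ almost surely and the conditional bound of Proposition~\ref{prop:gensgd} becomes unconditional. It then remains to verify three things on $\Lambda$: the variational inequality \eqref{eqn:A2}, the second-moment bound \eqref{eqn:A4}, and (for the approximate version) the bias estimate \eqref{eqn:A3}.

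For \eqref{eqn:A2}, I would diagonalize $B:=C_0^{1/2}A^\top\Gamma^{-1}AC_0^{1/2}=\sum_i\mu_i e_ie_i^\top$ and use the identities $M_\lambda^{-1}=C_0^{1/2}(B+\lambda I)^{-1}C_0^{1/2}$ and $A^\top\Gamma^{-1}A=M_\lambda-\lambda C_0^{-1}$. A short calculation (already carried out within the proof of Theorem~\ref{thm:prob}) then gives $F(\lambda)=\tr(M_\lambda^{-1})+\lambda(\lambda/\lambda_\ast-1)\tr(M_\lambda^{-2}C_0^{-1})$ and $\partial_\lambda F(\lambda_\ast)=0$. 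Differentiating once more and using that each summand $(\mu_i+\lambda)^{-k}$ is monotone in $\lambda$, I would show that $\partial_\lambda F(\lambda)$ and $\lambda-\lambda_\ast$ have the same sign on all of $\Lambda$ and that $|\partial_\lambda F(\lambda)|\ge c|\lambda-\lambda_\ast|$ with $c$ depending only on $\lambda_l,\lambda_u,\lambda_\ast$ and on $\tr(C_0),\|C_0\|_F,\|A^\top\Gamma^{-1}A\|$. For \eqref{eqn:A4}, the expression for $\partial_\lambda f$ is a quadratic form in $(U,A^\top\Gamma^{-1}\xi)$ with coefficient matrices controlled by $\|M_\lambda^{-1}\|\le\lambda_l^{-1}\|C_0\|$ and $\|M_\lambda^{-1}C_0^{-1}\|\le\lambda_l^{-1}$; Cauchy–Schwarz combined with the hypotheses $\E|U|^4,\E|A^\top\Gamma^{-1}\xi|^4<\infty$ yields $\E|\partial_\lambda f(\lambda,Z)|^2\le a+b\|\lambda-\lambda_\ast\|^2$ with DI constants. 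For the approximate gradient, because $\lambda\mapsto u_\lambda(y)=M_\lambda^{-1}A^\top\Gamma^{-1}y$ is a smooth rational function, a standard Taylor expansion gives $\widetilde\partial_\lambda u_\lambda(y)=\partial_\lambda u_\lambda(y)+R(\lambda,y,h)$ with $\|R\|=O(h^2)$ and a coefficient controlled by $\|M_\lambda^{-1}\|^{3}\|C_0^{-1}\|^{2}$; multiplying by $u_\lambda(y)-u$ and taking expectation yields $\|\E\widetilde\partial_\lambda f(\lambda,Z)-\partial_\lambda F(\lambda)\|=O(h^2)$, hence $\alpha_k=O(h_k^4)$.

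Plugging these verifications into Proposition~\ref{prop:gensgd} gives part (1) directly (exact SGD, $\alpha_k=0$, so $\E\|\lambda_n-\lambda_\ast\|^2\le BC_n$) and part (2): with constant finite-difference stepsize $h_k=h$ one obtains the $\alpha_0=O(h^4)$ plateau $\E\|\lambda_n-\lambda_\ast\|^2\le B(C_n+h^4)$, while with decaying $h_k\le h\beta_k^{1/4}$ we have $\alpha_k\le D\beta_k$ and the second statement of Proposition~\ref{prop:gensgd} erases the plateau. Finally, Assumption~\ref{aspt:DI} enters only through bounds of the form $\tr(C_0\,\tilde M^{-\ell})\le\lambda_l^{-\ell}\tr(C_0)$ and $\|\tilde M^{-1}\|\le\lambda_l^{-1}$, so all constants $c,a,b,D$ and therefore $B$ are dimension independent. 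The main obstacle I anticipate is step \eqref{eqn:A2}: although strong convexity of $F$ at $\lambda_\ast$ is routine, Remark~\ref{rem:app_convexity} warns that $F$ is not globally convex, so the variational inequality on the whole interval $\Lambda$ must be extracted from the sign structure of the rational function $\partial_\lambda F$ rather than from a second-derivative bound, and the constant must be tracked carefully to remain independent of the ambient dimension $d$.
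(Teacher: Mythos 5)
Your proposal is correct and follows essentially the same route as the paper's proof: take $\mathcal D=\Lambda$ so that the projection $\chi$ makes the event in Proposition \ref{prop:gensgd} hold almost surely, verify \eqref{eqn:A2} from the factorization $\partial_\lambda F(\lambda)=\tfrac{\lambda-\lambda_\ast}{\lambda_\ast}\bigl(-\Tr(P_2DD^\top)\bigr)$ together with a uniform leading-eigenvector lower bound on $[\lambda_l,\lambda_u]$ (exactly your ``sign structure instead of convexity'' point, which is also how the paper handles the non-convexity flagged in Remark \ref{rem:app_convexity}), verify \eqref{eqn:A4} and \eqref{eqn:A3} via moment bounds on the quadratic forms and a third-derivative Taylor remainder giving $\alpha_k=O(h_k^4)$, and then invoke the two regimes of Proposition \ref{prop:gensgd}. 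The only substantive difference is that you propose to bound the relevant second moments directly from the hypotheses $\E|u|^4<\infty$, $\E|A^\top\Gamma^{-1}\xi|^4<\infty$ via Cauchy--Schwarz, whereas the paper reuses the Hanson--Wright-based estimates of Lemma \ref{lem:HW}; your variant is, if anything, better aligned with the theorem's stated fourth-moment assumptions.
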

{
\begin{remark}
While in the offline setting the proof of the consistency result for the linear {sub-Gaussian} setting was heavily relying on the {sub-Gaussian} assumption on $u$ and $\xi$, in the online setting we are able to extend the result {to a wider range of distributions} of $u$ and $\xi$. For our proof of Theorem~\ref{thm:linSGD} in Appendix~\ref{app:proof_linear_online} we only need to assume that $\E[|u|^4]<\infty$ and $\E[|A^\top\Gamma^{-1}\xi|^4]<\infty$. Hence, it can also be applied to general linear inverse problems without {sub-Gaussian} assumption on the unknown parameter or {sub-Gaussian} assumption on the noise.
\end{remark}
}

\section{Numerical results}
\label{sec:num}

In this section our focus will be directed on testing the results of the previous sections. We will present various inverse problems to our theory, which will be based on partial differential equations, both linear and nonlinear which includes a linear 2D Laplace equation, a 2D Darcy flow from geophysical sciences and a 2D eikonal equation which arises in wave propagation. As a final numerical experiment, related to the examples discussed in Section \ref{sec:intro}, we test our theory on an image denoising problem.

For the linear example, we have access to the exact derivative of the Tikhonov solution for the {bilevel optimization. In particular, we can implement both offline and online bilevel optimization methodologies. In contrast, finding the exact derivatives for nonlinear inverse problems  is difficult both in derivation and computation, so we will only use online methods with approximated gradient. For online methods, we implement the following variants:
\begin{itemize}
\item $\text{bSGD}$: Application of the bilevel SGD, Algorithm~\ref{alg:SGD} with exact derivative \eqref{eq:bi-level_grad}.
\item $\text{bSGD}_{a}$: Application of the bilevel SGD, Algorithm~\ref{alg:aBSGD} with derivative approximation \eqref{eq:L2aGD} for fixed $h_k=h_0$ in \eqref{eq:grad_approx}.
\end{itemize}
{For our first model we have tested both bSGD and $\text{bSGD}_{a}$, while for the nonlinear models we have used $\text{bSGD}_{a}$. It is worth mentioning that we have also tested, as a side experiment, using the adaptive derivative $h_k=h_0/k^{1/4}$. For these experiments it was shown that the adaptive derivative scheme does not show any major difference to the case of fixed $h_k=h_0$. {In fact, Theorem \ref{thm:linSGD} has already implied this, since the difference between the two scheme is of order $h_0^{-4}$, which is often smaller than the error from the numerical forward map solver or the use of $\bar{\lambda}_n$. }
For this reason, we  do not demonstrate this scheme in our numerics. 
 }

\subsection{Linear example: 2D Laplace equation}
\label{ssec:linear_num}
We consider the following forward model 
\begin{equation}\label{eq:Laplace}
\begin{cases}
-\Delta p(x) &= \ u(x), 	\quad x\in {X},\\
\ \ \ \ \ p(x) &=\  0,			\quad	\quad \ 	x \in \partial {X},
\end{cases}
\end{equation}
with Lipschitz domain $ {X}=[0,1]^2$ and consider the corresponding inverse problem of recovering the unknown $u^\dagger$ from observation {of \eqref{eq:Laplace}, described through
\begin{equation}
y = \mathcal O(p) + \eta,
\end{equation}
where $\eta\sim\mathcal N(0,\Gamma)$ is measurements noise and $p$ is the solution of \eqref{eq:Laplace}. We solve the PDE in weak form, where} $\mathcal A^{-1}:\mathcal U\to\mathcal V$, with $\mathcal U=L^\infty( {X})$ and $\mathcal V = H^1_0( {X})\cap H^2( {X})$, denotes the solution operator for \eqref{eq:Laplace} and $\mathcal O:\mathcal V\to\R^K$ denotes the observation map taking measurements at $K$ randomly chosen points in $ {X}$, i.e.~$\mathcal O(p) = (p(x_1),\dots,p(x_K))^\top$, for $p\in\mathcal V$, $x_1,\dots,x_K\in  {X}$. For our numerical setting $K=250$ points have been observed, which is illustrated in Figure~\ref{fig:obs_points}.  We can express this problem as a linear inverse problem in the reduced form \eqref{eq:lin_ip} by
\begin{equation}
y^\dagger = Au^\dagger+\eta \in\R^K,
\end{equation}
where $A = \mathcal O\circ \mathcal A^{-1}$ is the forward operator which takes measurements of \eqref{eq:Laplace}. The forward model \eqref{eq:Laplace} is solved numerically on a uniform mesh with $1024$ grid points in $ {X}$ by a finite element method with continuous, piecewise linear finite element basis functions.

We assume that our unknown parameter $u^\dagger$ follows a Gaussian distribution $\mathcal N(0,\frac1{\lambda_\ast}C_0)$ with covariance
\begin{equation}\label{eq:cov}
C_0 = \beta\cdot ( \tau^2 I-\Delta)^{-\alpha},
\end{equation}
with Laplacian operator $\Delta$ equipped with Dirichlet boundary conditions, known $\beta,\ \tau >0$, $\alpha>1$ and unknown $\lambda_\ast>0$. To sample from the Gaussian distribution, we consider the truncated Karhunen-Lo\`{e}ve (KL) expansion \cite{LPS14}, which is a series representation for $u\sim \mathcal N(0,C_0)$, i.e.
\begin{equation}
\label{eq:KLE}
u(x) = \sum\limits_{i=1}^\infty \xi_i\sqrt{\frac{1}{\lambda_\ast}\sigma_i}\varphi_i(x),
\end{equation}
where $(\sigma_i,\varphi_i)_{i\in\N}$ are the eigenvalues and eigenfunction of the covariance operator $C_0$ and $\xi=(\xi_i)_{i\in\N}$ is an i.i.d. sequence with $\xi_1\sim\mathcal N(0,1)$ i.i.d.\,. Here, we have sampled from the KL expansion for the discretized $C_0$ on the uniform mesh. Furthermore, we assume to have access to training data $(u^{(j)},y^{(j)}=Au^{(j)}+\eta^{(j)})_{j=1,\dots,n}$, $n\in\N$, which we will use to learn the unknown scaling parameter $\lambda_\ast$ before solving the inverse problem. For the numerical experiment we set $\beta = 100,\ \tau = 0.1,\ \alpha = 2$ and $\lambda_\ast = 0.1$. After learning the regularization parameter, we will compare the estimated parameter through the different results of the Tikhonov minimum
\begin{equation*}
u_{\lambda_i}(y^\dagger) = (A^\top\Gamma^{-1}A+\lambda_i\cdot C_0)^{-1}A^\top y^\dagger,
\end{equation*}
for $\lambda_1 = \widehat\lambda$ learned from the training data, $\lambda_2 = \lambda_\ast$ and fixed $\lambda_3 = 1$. We have used the MATLAB function \verb+fmincon+ to recover the the regularization parameter offline by solving the empirical optimization problem
\begin{equation}
\widehat \lambda_n \in \underset{\lambda>0}{\argmin}\ \frac1n \sum\limits_{j=1}^n |u_\lambda(y^{(j)})-u^{(j)}|^2.
\end{equation}
We use $M=1000$ samples of training data to construct Monte--Carlo estimates of $\E[|\widehat\lambda_n-{\lambda_\ast}|^2]$.
While the computation of the empirical loss function can be computational demanding, we also apply the proposed online recovery in form of the SGD method to learn the regularization parameter $\lambda$ 
by running Algorithm~\ref{alg:SGD} with chosen step size $\beta_k=200/k$, range of regularization parameter $\Lambda = [0.0001, 10]$ and initial value $\lambda_0=1$. 
The resulting iterate $\lambda_k$ can be seen in Figure \ref{fig:offline_online} on the right side.

\begin{figure}[!htb]
	\begin{center}
	\includegraphics[width=0.6\textwidth]{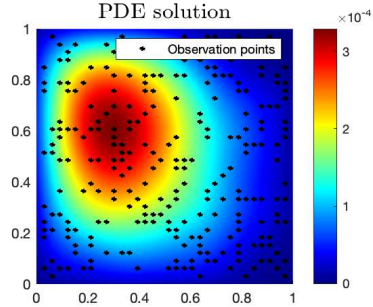}
    \caption{Reference PDE solution for the Laplace equation of the underlying unknown parameter $u^\dagger$, and the corresponding randomized observation points $x_1,\dots,x_K\in  {X}$.}\label{fig:obs_points}
    \end{center}
\end{figure} 

\begin{figure}[!htb]
	\begin{subfigure}[b]{0.49\textwidth}
\includegraphics[width=1.1\textwidth]{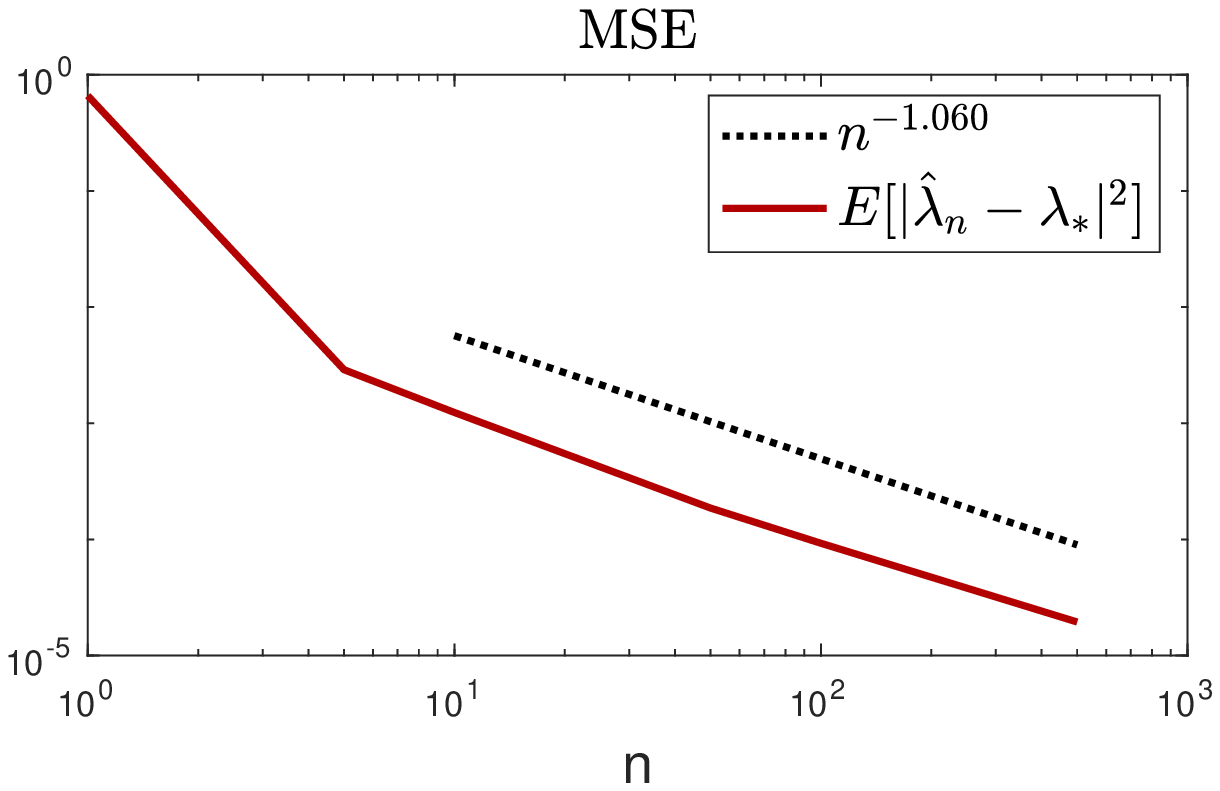}
	\end{subfigure}
	\begin{subfigure}[b]{0.49\textwidth}
	\includegraphics[width=1.1\textwidth]{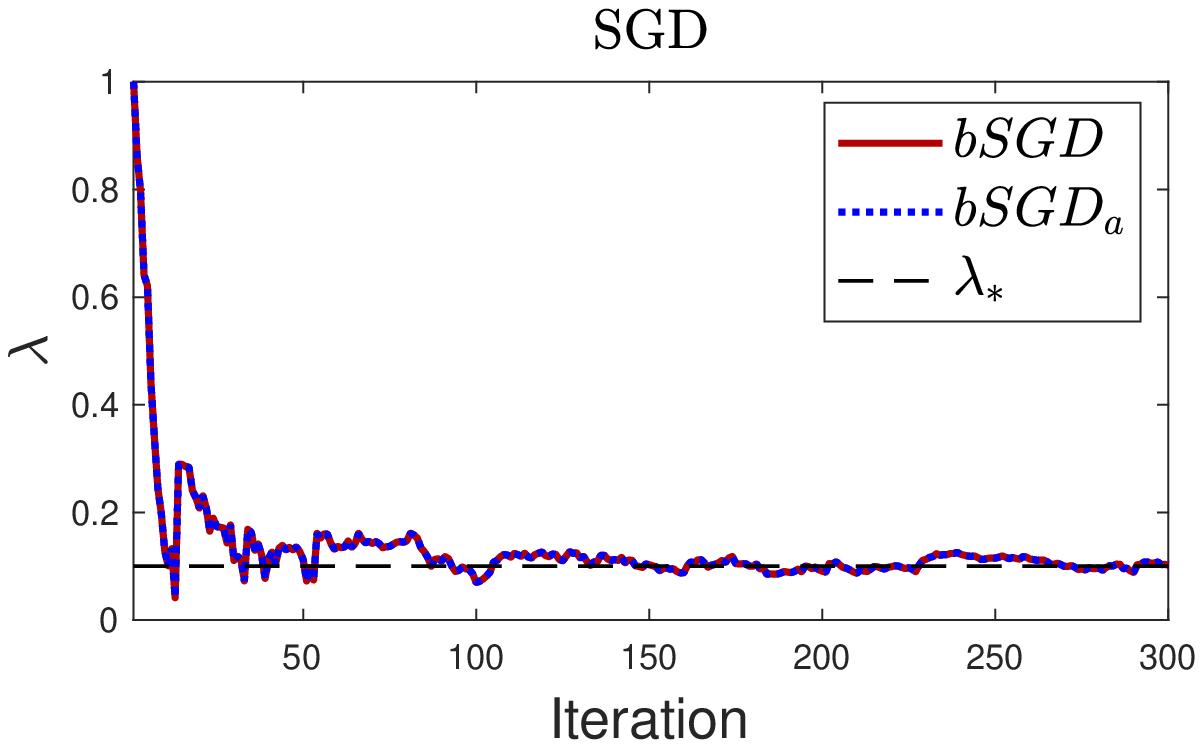}
	\end{subfigure}
    \caption{MSE (left) resulting from the offline recovery depending on training data size. Learned regularization parameter $\lambda_k$ (right) resulting from the online recovery, Algorithm~\ref{alg:SGD} for the Laplace equation.}\label{fig:offline_online}
\end{figure} 

\begin{figure}[!htb]
	\begin{center}
	\includegraphics[width=0.9\textwidth]{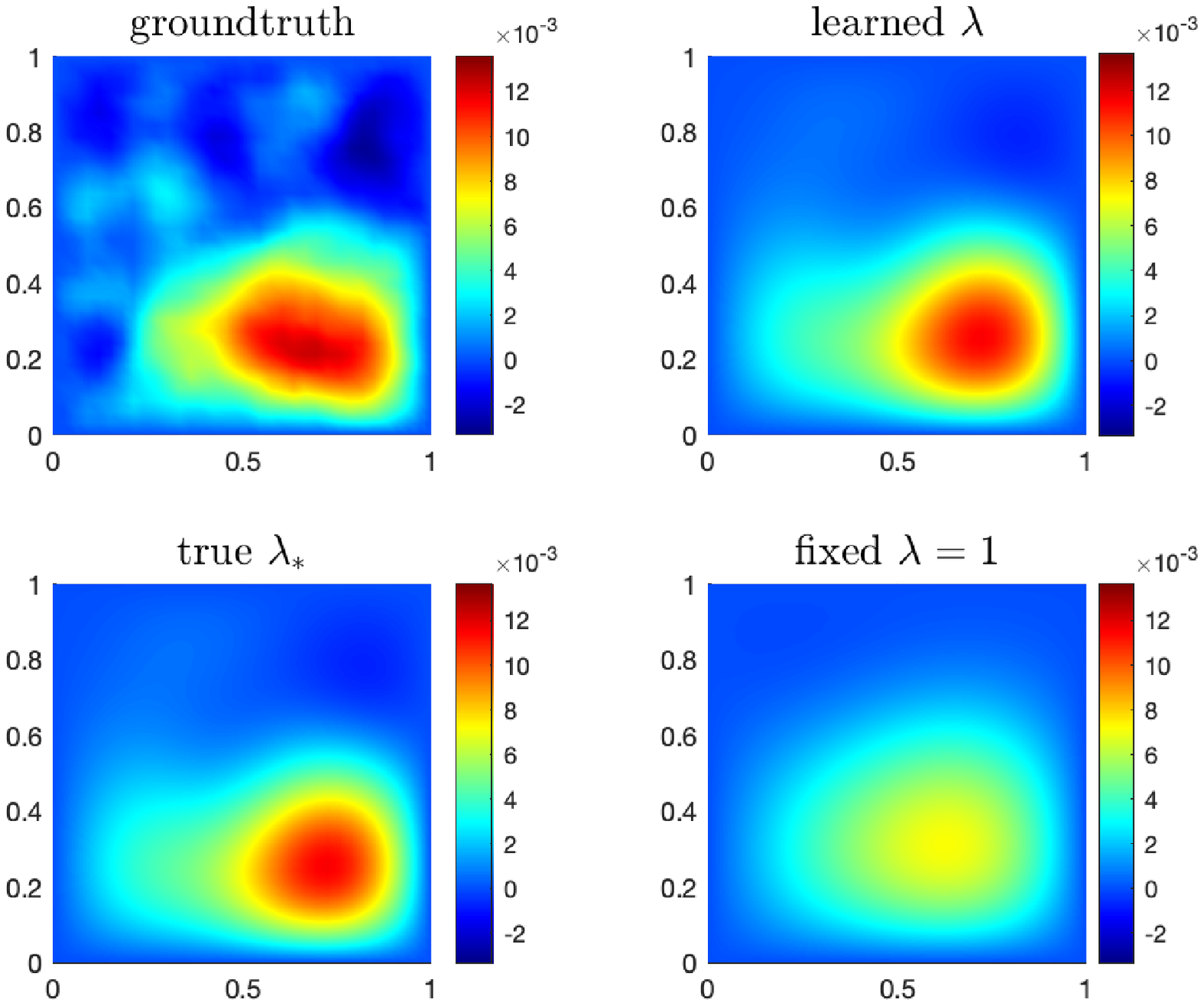}
    \caption{Comparison of different Tikhonov solutions for choices of regularization parameter $\lambda_i$. The learned Tikhonov regularized solution corresponds to the resulting one of the SGD method Algorithm \ref{alg:SGD} for the Laplace equation.}\label{fig:Tik_min}
    \end{center}
\end{figure} 

From the numerical experiments for the linear example we observe that the numerics match our derived theory. In the offline recovery setting, this is first evident in Figure \ref{fig:offline_online} on the left side. We compare the MSE with the theoretical rate, which seems to decay at the same rate. The online recovery is highlighted by the right plot in Figure \ref{fig:offline_online} which demonstrates the convergence towards $\lambda_{\ast}$ as the iterations progress. Further, we show the result of the approximate bSGD method Algorithm~\ref{alg:aBSGD} for fixed chosen $h_k=0.01$ in \eqref{eq:grad_approx}. As the derivative approximation \eqref{eq:grad_approx} is closely exact, we see very similar good performance of the approximate bSGD method.

Finally, Figure \ref{fig:Tik_min} shows the recovery of the underlying unknown through different choices of $\lambda$. It verifies that the adaptive learning of $\lambda$ outperforms that of fixed regularization parameter $\lambda=1$.

\subsubsection{Dimension independent experiments}

{
Next, we are going to analyze the indepence of dimension in the bilevel optimization approach. Our setup is similar as discussed before, but considering the domain $X=[0,1]$.}

{
We solve the forward model numerically on a uniform mesh for different choices of mesh sizes $h\in\{2^{-5},2^{-6},2^{-7},2^{-8}\}$ by a finite element method with continuous, piecewise linear ansatz functions, where the same $K=5$ observation points have been observed on each mesh. We assume that the underlying parameter $u^\dagger$ follows a Gaussian distribution $\mathcal N(0,\frac1\lambda_\ast C_0)$ with $C_0 = (-\Delta)^{-1}$ and apply again the truncated KL expansion up to a fixed truncation index, but considering discretized versions $\varphi_i^h$ of $\varphi_i(x)$ on each level $h$.}

{
 In Figure~\ref{fig:comp}, we compare the MSE for the resulting estimates $\widehat\lambda_n$ for different choices of sample size $n$ depending on the dimension $d$. Here, we use again $M=1000$ samples of training data to construct Monte-Carlo estimates of the MSE $\E[|\widehat\lambda_n-\lambda_\ast|^2]$.}

\begin{figure}[h!]
\centering
\includegraphics[width=0.5\linewidth]{./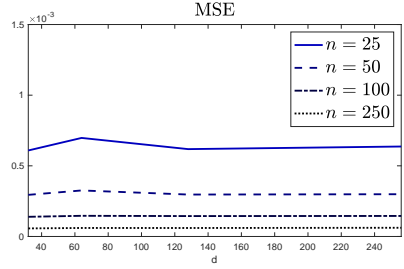}
\caption{{MSE resulting from the offline recovery depending on dimension of the parameter space $d$ and evaluated for different choices of training data size $n$.}}
 \label{fig:comp}
\end{figure}

\subsection{Nonlinear example: 2D Darcy flow}
\label{ssec:nlinear}

We now consider the following elliptic PDE which arises in the study of subsurface flow known as Darcys flow. The forward model is concerned using the log-permeability $\log u \in L^{\infty}( X)=: \mathcal{U}$ to solve for the pressure $p \in H^1_0( {X})\cap H^2({X})=: \mathcal V$ from
\begin{equation}\label{eq:df}
\begin{cases}
-\nabla\cdot(\exp(u) \nabla p) &= \ f, 	\quad x\in  {X}\\
 \quad \quad \quad \quad \quad \quad \  p &=\ 0,				\quad	x \in \partial  {X}
\end{cases}
\end{equation}
with domain $ {X}=[0,1]^2$ and known scalar field $f \in \R$. 
{We again consider the corresponding inverse problem of recovering the unknown $u^\dagger$ from observation of \eqref{eq:df}, described through
\begin{equation}
y = \mathcal{O}(p)+\eta,
\end{equation}
where $\mathcal O:\mathcal{V}\to \R^K$ denotes the linear observation map, which takes again measurements at $K$ randomly chosen points in $ {X}$, i.e.~$\mathcal O(p) = (p(x_1),\dots,p(x_K))^\top$, for $p\in\mathcal{V}$, $x_1,\dots,x_K\in  {X}$.  For our numerical setting we choose $K=125$ observational points, which can again be seen in Figure~\ref{fig:obs_points2}. The measurements noise is denoted by $\eta\in\mathcal N(0,\Gamma)$, for $\Gamma\in\R^{K\times K}$ symmetric and positive definite.

{We formulate the inverse problem through
\begin{equation}\label{eq:ip_nonlinear}
y^\dagger =  \mathcal{G}(u^\dagger)+\eta,
\end{equation}
with $ \mathcal{G} = \mathcal O\circ G$}, where $G:\mathcal U\to\mathcal{V}$ denotes the solution operator of \eqref{eq:df}, solving the PDE \eqref{eq:df} in weak form. The forward problem \eqref{eq:df} has been solved by a second-order centered finite difference method on a uniform mesh with $256$ grid points.

We assume that $u^\dagger$ follows the Gaussian distribution $\mathcal N(0,\frac1{\lambda_\ast}C_0)$ with a covariance operator \eqref{eq:cov} prescribed with Neumann boundary condition. Similar as before, $\beta, \tau>0$ and $\alpha>1$ are known, while $\lambda_\ast>0$ is unknown. This time, in order to infer the unknown parameter, we use the KL expansion and do estimation of the coefficients $\xi$. {See also \cite{CIRS18,GHLS2019} for more details.}
Therefore we truncate \eqref{eq:KLE} up to $d$ and consider the nonlinear map $\mathcal G:\R^{d}\to\R^K$, with $\mathcal G(\xi) = G(u^\xi(\cdot))$ and
$$u^\xi(\cdot) = \sum_{i=1}^d \xi_i\sqrt{\frac1{\lambda_\ast}\sigma_i}\varphi_i(\cdot).$$ 
This implies our unknown parameter is given by $\xi\in\R^{d}$ and we set a Gaussian prior on $\xi$ with $\mathcal N(0,\frac{1}{\lambda_\ast}I)$, where $\lambda_\ast>0$ is unknown.

We again assume to have access to training data $(\xi^{(j)},y^{(j)})_{j=1,\dots,n}$, $n\in \mathbb N$, where $\xi^{(j)}\sim\Xi\sim\mathcal N(0,\frac1{\lambda_\ast}I)$ and we aim to solve the original bilevel optimization problem
\begin{equation}
\widehat\lambda \in\underset{\lambda>0}\argmin\ \E[ \|u_\lambda(Y) - \Xi\|^2], \quad
u_\lambda(Y) = \underset{\xi\in\R^{d}}{\argmin}\ \frac12\| \mathcal G(\xi)-Y\|_{\Gamma}^2 + \frac{\lambda}2\|\xi\|_{I}^2.
\end{equation}
The corresponding empirical optimization problem is given by
\begin{equation}\label{eq:nonlinear_bi-level}
\widehat\lambda^n \in\underset{\lambda>0}\argmin\ \frac1n\sum\limits_{j=1}^n \|u_\lambda(y^{(j)}) - \xi^{(j)}\|^2,\quad
u_\lambda(y^{(j)}) = \underset{\xi\in\R^{d}}{\argmin}\ \frac12\| \mathcal G(\xi)-y^{(j)}\|_{\Gamma}^2 + \frac{\lambda}2\|\xi\|_{I}^2,
\end{equation}
for a given size of the training data $n$. In comparison to the linear setting, we are not able to compute the Tikhonov minimum analytically for each observation $y^{(j)}$, as we require more computational power to solve  \eqref{eq:nonlinear_bi-level}. We will solve \eqref{eq:nonlinear_bi-level} online by application of Algorithm~\ref{alg:aBSGD}, where we will approximate the derivative of the forward model by centered different method \eqref{eq:grad_approx}. We keep the accuracy of the numerical approximation fixed to $h_k=0.01$.

For our numerical results we choose $d=25$
coefficients in the KL expansion and the noise covariance $\Gamma = \gamma^2 I$ with $\gamma = 0.001$. For the prior model set $\beta=10$, $\alpha=2$, $\tau = 3$ and the true scaling parameter $\lambda_\ast = 0.1$.

For the SGD method we have chosen a step size $\beta_k = 0.001k^{-1}.$ The learned parameter moves fast into direction of the true $\lambda_\ast$, and oscillates around this value, where the variance reduces with the iterations, as seen in Figure \ref{fig:SGD_nonlinear}.

Finally, Figure \ref{fig:Tik_min_nonlinear_par} highlights again the importance and improvements of choosing the right regularization parameters.

\begin{figure}[!htb]
	\begin{center}
	\includegraphics[width=0.6\textwidth]{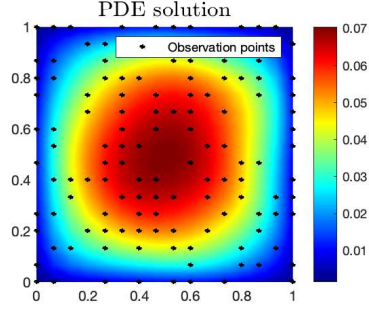}
    \caption{Reference PDE solution for Darcy flow of the underlying unknown parameter $u^\dagger$ and the corresponding randomized observation points $x_1,\dots,x_K\in  {X}$.}\label{fig:obs_points2}
    \end{center}
\end{figure}

\begin{figure}[!htb]
	%\begin{center}
	\includegraphics[width=0.9\textwidth]{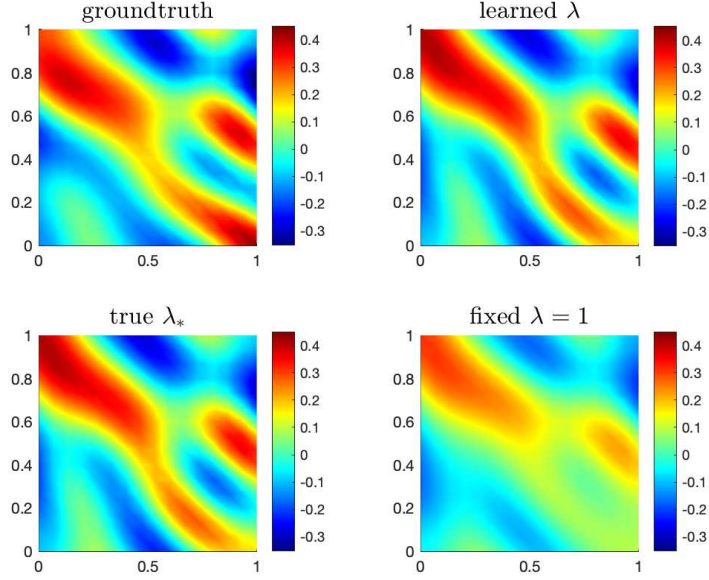}
    \caption{Comparison of different Tikhonov solutions for choices of the regularization parameter $\lambda$. The learned Tikhonov regularized solution corresponds to the resulting one of the SGD method Algorithm \ref{alg:aBSGD} for Darcy flow.}
    \label{fig:Tik_min_nonlinear_par}
    %\end{center}
\end{figure}

\begin{figure}[!htb]
	\begin{center}
	\includegraphics[width=0.5\textwidth]{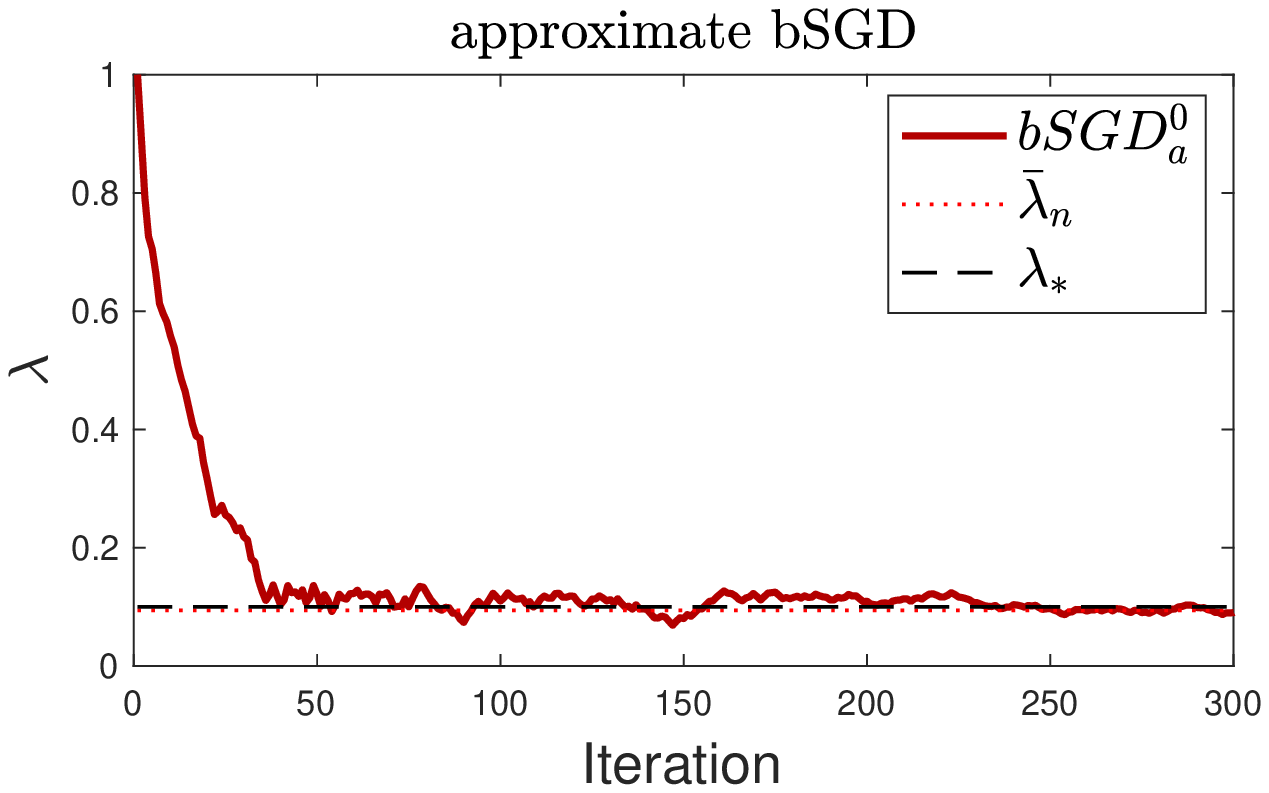}
    \caption{Learned regularization parameter $\lambda_k$, for Darcy flow, resulting from the approximate bilevel SGD method Algorithm \ref{alg:aBSGD} with fixed derivative accuracy $h=h_0$ and the corresponding mean over the last $50$ iterations $\bar\lambda_n$. {We obtain an error $|\lambda_\ast-\bar\lambda_n|^2= 3.3640\mathrm{e}{-05}$.}}
    \label{fig:SGD_nonlinear}
    \end{center}
\end{figure}

\subsection{Nonlinear example: Eikonal equation}
\label{ssec:eik}
We also seek to test our theory on the eikonal equation, which is concerned with wave propagation. Given a slowness or inverse velocity function 
$s(x) \in C^0(\bar{{ {X}}})=:\mathcal{U}$, characterizing the medium, and a source
location $x_0 \in { {X}}$, the 
forward eikonal equation is to solve for travel time 
$T(x) \in C^0(\bar{{ {X}}})=:\mathcal{V}$ satisfying  
\begin{equation}\label{eq:eikonal}
\begin{cases}
\qquad |\nabla T(x)| &=\ s(x), \quad x \in { {X}}\setminus \{x_0\} , \\
 \  \qquad T(x_0) &=\ 0, \\
\nabla T(x) \cdot \nu(x) & \geq \ 0, \quad \quad \ x \in \partial {{X}}.
\end{cases}
\end{equation}
The forward solution $T(x)$ represents the shortest travel time from $x_0$ to a point in the domain ${ {X}}$. The Soner boundary condition imposes that the wave propagates along the unit outward normal  $\nu(x)$ on the boundary of the domain.
The model equation \eqref{eq:eikonal} is of the form \eqref{eq:IP_PDE_constr} with an additional constrain arising from the Soner boundary condition.

The inverse problem for \eqref{eq:eikonal}  is to determine the speed function $s=\exp(u)$ from measurements of the shortest travel time $T(x)$. The data is assumed to take the form
\begin{equation}
\label{eq:func}
 y= \mathcal{O}(T) + \eta,
\end{equation}
where $\mathcal O:\mathcal{V}\to \R^K$ denotes the linear observation map, which takes again measurements at $K=125$ randomly chosen grid points in $ {X}$, i.e.~$\mathcal O(p) = (T(x_1),\dots,T(x_K))^\top$, for $T\in\mathcal{Z}$, $x_1,\dots,x_K\in  {X}$. The observed points can be seen in Figure~\ref{fig:obs_points3}. The measurements noise is again denoted by $\eta\in\mathcal N(0,\Gamma)$, for $\Gamma\in\R^{K\times K}$ symmetric and positive definite. {Again we formulate the inverse problem through
\begin{equation}
\label{eq:ip_eikonal}
y^\dagger =  \mathcal{G}(u^\dagger)+\eta,
\end{equation}
with $ \mathcal{G} = \mathcal O\circ G$}, where $G:\mathcal{U} \to\mathcal{V}$ denotes the solution operator of \eqref{eq:df}. As before we will assume our unknown $u^{\dagger}$ is distributed according to a mean-zero Gaussian with covariance structure \eqref{eq:cov}. For this numerical example we set $\beta = 1,\ \tau = 0.1,\ \alpha = 2$ and $\lambda_\ast = 0.1$. We truncate the KL expansion such that the unknown parameters $\xi\in\R^d$ with $d=25$. For the eikonal equation we take a similar approach to Section \ref{ssec:nlinear}, that is we use the SGD described through Algorithm \ref{alg:aBSGD}. For the SGD method we have chosen an adaptive step size $${\beta_k = \min\left(0.002,\frac{\lambda_0}{|\partial_\lambda f(\lambda_k,Z^{(k)})|}\right) k^{-1}}.$$ {Here, the chosen step size $\beta_k$ provides a bound on the maximal moved step in each SGD step, i.e.
\begin{equation}
|\beta_k\cdot\partial_\lambda f(\lambda_k,Z^{(k)})|\le \lambda_0/k.
\end{equation}
This helps to avoid instability arising through the high variance of the stochastic gradient, but the step size will be mainly of order $0.002/k$. However, from theoretical side it is not clear whether assumption of \eqref{eq:RM_cond} is still satisfied. Therefore, we will also show the resulting $\sum_{k=1}^n\beta_k$ and the realisation of the stochastic gradient $\partial_\lambda f(\lambda_k,Z^{(k)})$ in Figure~\ref{fig:step_size_realisation}.}

Our setting for the parameter choices of our prior and for the bilevel-optimization problem remain the same. To discretize \eqref{eq:eikonal} on a uniform mesh with $256$ grid points we use a fast marching method, described  by the work of Sethian \cite{EDS11,JAS99}.

As we observe the numerical experiments, Figure \ref{fig:Tik_min_eik_par} highlights that using the learned $\lambda_n$ provides recoveries almost identical to that of using the true $\lambda_{*}$. For both cases we see an improvement over the case $\lambda=1$ which is what we expected and have seen throughout our experiments. This is verified through Figure \ref{fig:SGD_eik} where we see oscillations of the learned $\lambda_k$ around the true $\lambda_{\ast}$, until approximately 100 iterations where it starts to become stable.  {Finally from Figure \ref{fig:step_size_realisation} we see that the summation of our choice $\beta_k$  diverges, but not as quickly as the summation of the deterministic step size $0.002/k$ does, which is the implication of the introduced adaptive upper bound based on the size of the stochastic gradient $\partial_\lambda f(\lambda_k,Z^{(k)})$. Figure \ref{fig:step_size_realisation} also shows the histrogram of the stochastic gradient and its rare realized large values.}

\begin{figure}[!htb]
	\begin{center}
	\includegraphics[width=0.6\textwidth]{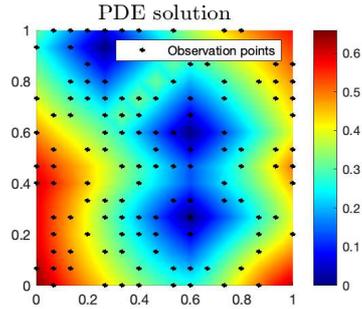}
    \caption{Reference PDE solution for the eikonal equation of the underlying unknown parameter $u^\dagger$, and the corresponding randomized observation points $x_1,\dots,x_K\in  {X}$.}\label{fig:obs_points3}
    \end{center}
\end{figure} 

\begin{figure}[!htb]
	%\begin{center}
	\includegraphics[width=0.9\textwidth]{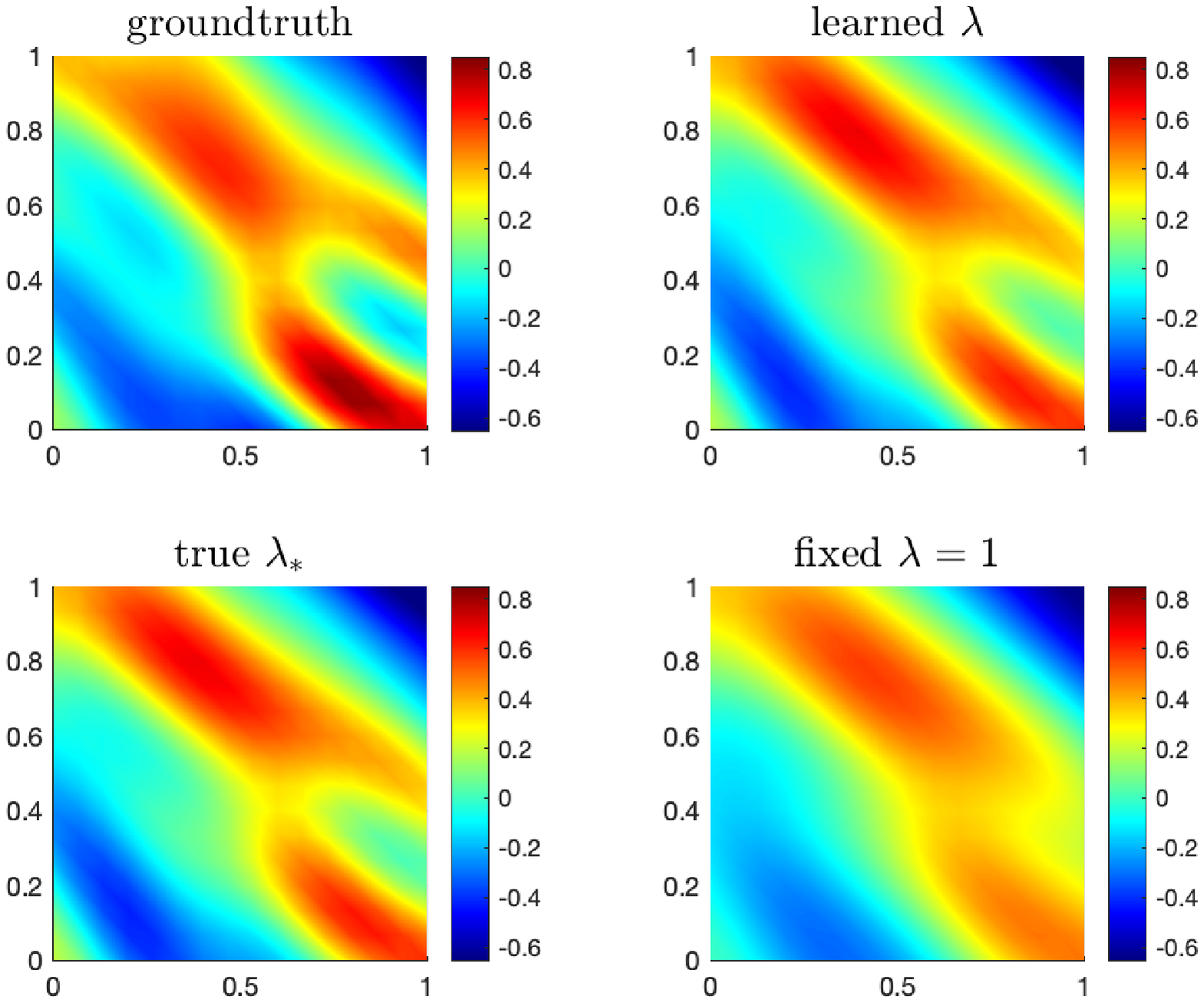}
    \caption{Comparison of different Tikhonov solutions for choices of the regularization parameter $\lambda$. The learned Tikhonov regularized solution corresponds to the resulting one of the SGD method Algorithm \ref{alg:aBSGD} for the eikonal equation.}
    \label{fig:Tik_min_eik_par}
    %\end{center}
\end{figure}

\begin{figure}[!htb]
	%\begin{center}
	\includegraphics[width=0.5\textwidth]{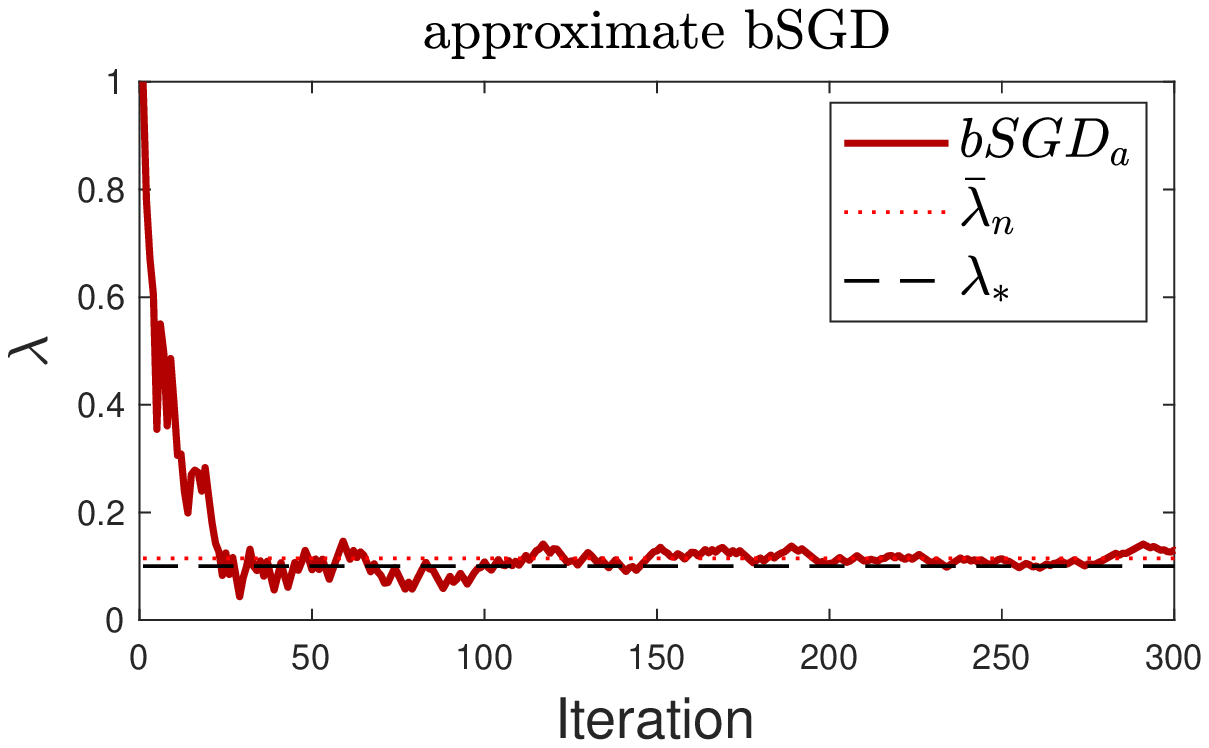}
    \caption{Learned regularization parameter $\lambda_k$, for the eikonal equation, resulting from the approximate bilevel SGD method Algorithm \ref{alg:aBSGD} with fixed derivative accuracy $h=h_0$ and the corresponding mean over the last $50$ iterations $\bar\lambda_n$. {We obtain an error $|\lambda_\ast-\bar\lambda_n|^2= 1.9360\mathrm{e}{-05}$.}}
    \label{fig:SGD_eik}
    %\end{center}
\end{figure} 

\begin{figure}[!htb]
	\begin{subfigure}[b]{0.49\textwidth}
\includegraphics[width=1.1\textwidth]{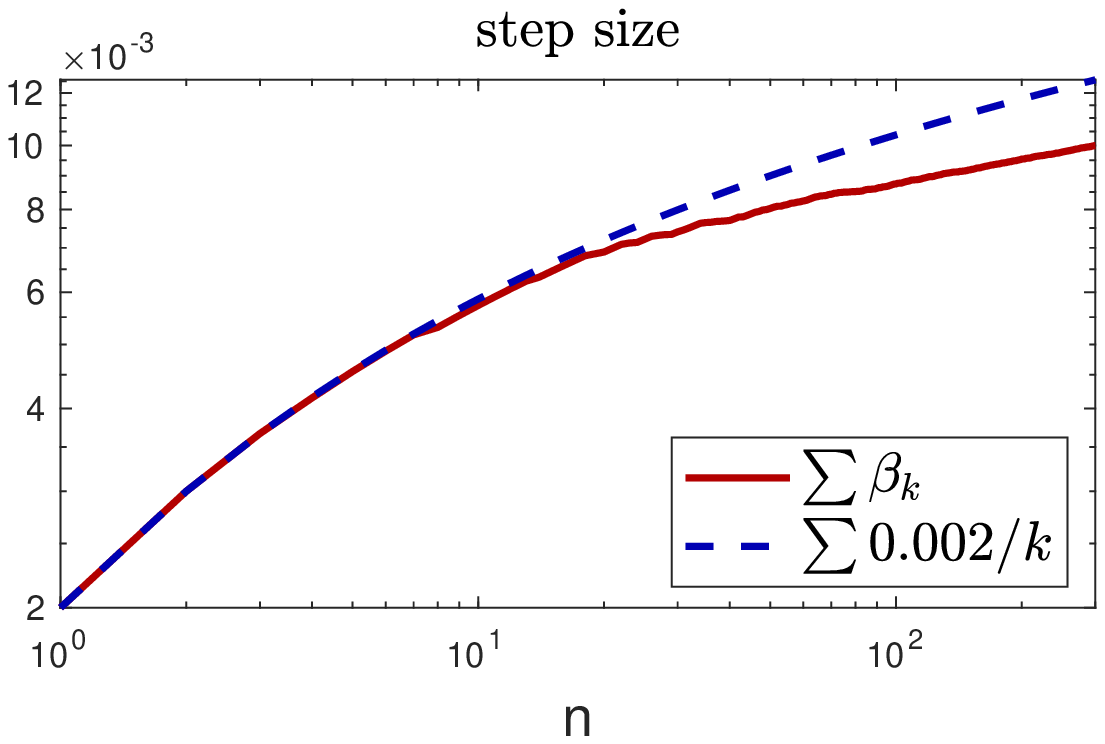}
	\end{subfigure}
	\begin{subfigure}[b]{0.49\textwidth}
	\includegraphics[width=1.1\textwidth]{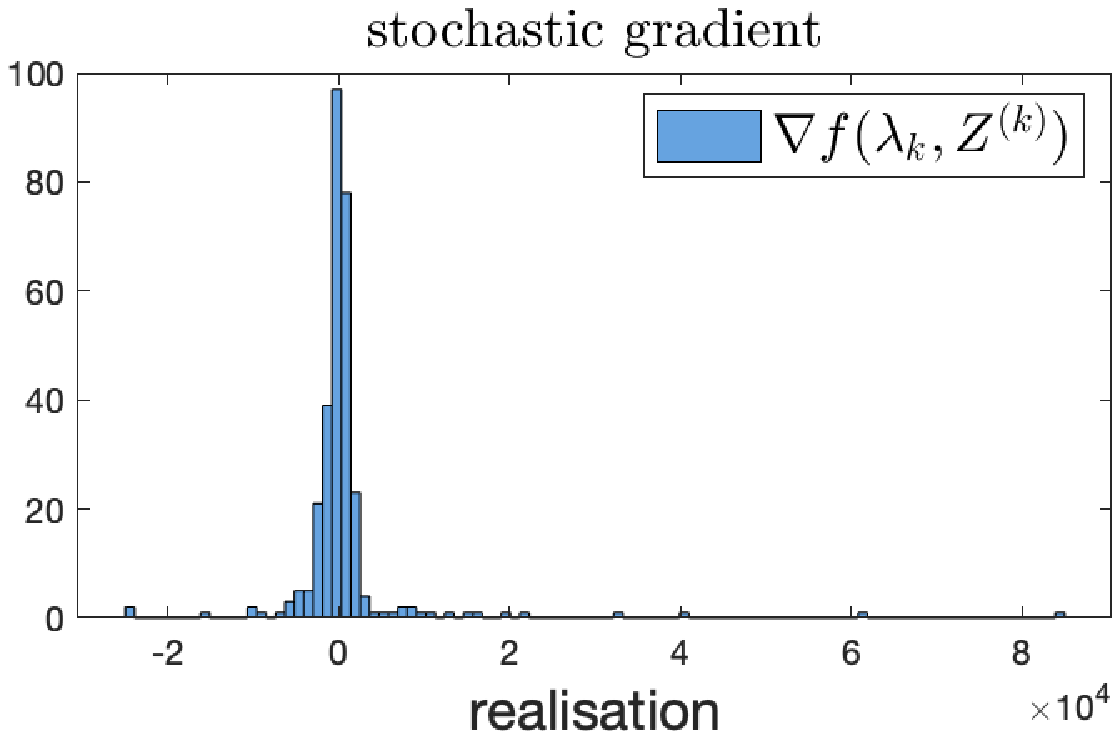}
	\end{subfigure}
    \caption{Summation of the realized adaptive step size (left) and the realized stochastic gradient $\nabla f(\lambda_k,Z^{(k)})$ (right) resulting from the online recovery, Algorithm~\ref{alg:SGD} for the eikonal equation.}\label{fig:step_size_realisation}
\end{figure}

\subsection{Signal denoising example}

{We now consider implementing our methods on image denoising, which is discussed in Section \ref{sec:intro} and subsection \ref{ssec:image}.  We are interested in denoising a 1D compound Poisson process of the form
\begin{equation}
\label{eq:pp}
u_t = \sum^{N_t}_{i=1}X_i,
\end{equation}
where $(N_t)_{t\in[0,T]}$ is a Poisson process, with rate $r>0$ and $(X_i)_{i=1}^{N_t}$ are i.i.d. random variables representing the jump size.
Here, we have chosen $X_1\sim\mathcal N(0,1)$. We consider the task of recovering a perturbed signal of the form \eqref{eq:pp} through Tikhonov regularization with different choices of regularization parameter $\lambda$. In particular, the observed signal $u=(u_{t_1},\ldots,u_{t_d})^{\top} \in \R^d$ is perturbed by white noise
\begin{equation}
\label{eq:observed_signal}
y_{t_i} = u_{t_i} + \eta_{t_i},
\end{equation}
where $t_i\in\{1/d\cdot T,2/d\cdot T,\dots,T\}$ and $\eta_{t_i}\sim\mathcal N(0,\sigma^2)$ are i.i.d. random variables, and the Tikhonov estimate corresponding to the lower level problem of \eqref{eq:imaging_bilevel} for given regularization parameter $\lambda>0$ is defined by
\begin{equation}\label{eq:Tik_signal}
u_\lambda(y) = (\Gamma^{-1} + \lambda L^{-1})^{-1}\Gamma^{-1}y,
\end{equation}
with given regularization matrix $L\in\R^{d\times d}$ and $y=(y_1,\dots,y_d)^\top\in\R^d$. We assume to have access to training data $(u^{(j)},y^{(j)})_{j=1}^n$ of \eqref{eq:observed_signal} and choose the regularization parameter $\widehat\lambda$ according to Algorithm~\ref{alg:SGD}. Further, we compare the resulting estimate of the signal
$$y_{\mathrm obs} = u^\dagger+\eta,$$
to fixed choices of $\lambda\in\{0.01,\ 0.00001\}$ and to the best possible choice $\lambda_\ast=\argmin_\lambda\ \|u_\lambda(y_{\mathrm obs})-u^\dagger\|^2$.\\
For the experiment we set the rate of jumps $r =  10$ and consider the signal observed up to time $T=1$ at $d=1000$ observation points. For Algorithm~\ref{alg:SGD}, we use a training data set of size $n=500$, we set an initial value $\lambda_0 = 0.001$ and step size $\beta_k = 0.001 k^{-1}$. The Tikhonov solution \eqref{eq:Tik_signal} has been computed with a second-order regularization matrix $L=\Delta^{-1}$. As we can see from our results the value of $\lambda=0.001$ oversmoothens the estimate in comparison with $\lambda=0.00001$. This is shown in Figure \ref{fig:denoising_recon_fixed}. However comparing fixed $\lambda$ with the learned $\lambda$ in Figure \ref{fig:denoising_recon_adaptive} we see an improvement, closer to the best possible $\lambda$, which is verified further through Table \ref{table:1}, where we can see the MSE over the time intervall. Both Figure \ref{fig:denoising_recon_fixed} and Figure \ref{fig:denoising_recon_adaptive} show on the right hand side the pointwise squared error over time. 

\begin{figure}[!htb]
	%\begin{center}
	\includegraphics[width=0.95\textwidth]{./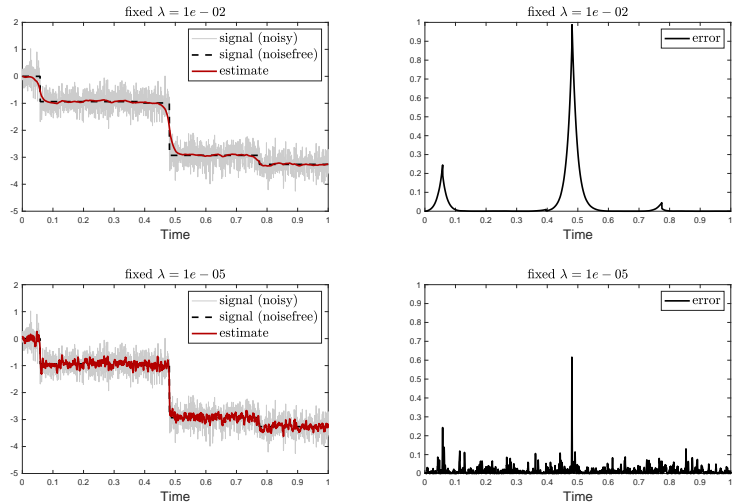}
    \caption{Comparison of different Tikhonov solutions for fixed choices of the regularization parameter $\lambda$ for the signal denoising example.}
    \label{fig:denoising_recon_fixed}
    %\end{center}
\end{figure} 

\begin{figure}[!htb]
	%\begin{center}
	\includegraphics[width=0.95\textwidth]{./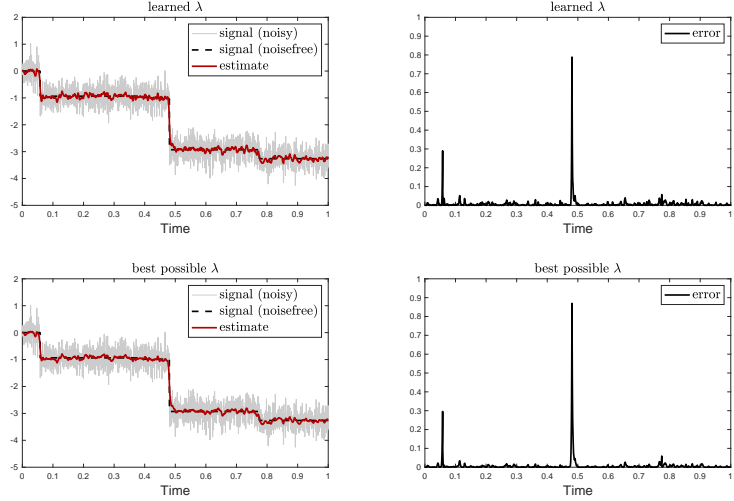}
    \caption{Comparison of the learned to best possible Tikhonov solutions for choices of the regularization parameter $\lambda$. The learned Tikhonov regularized solution corresponds to the resulting one of the SGD method Algorithm \ref{alg:SGD} for the signal denoising example.}
    \label{fig:denoising_recon_adaptive}
    %\end{center}
\end{figure}

\begin{table}
\begin{center}
\begin{tabular}{|l|c|c|c|c|}
\hline
$\boldsymbol{\lambda}$ & $1e-02$ & $1e-05$ & $\overline\lambda_n$ & $\lambda_\ast$ \\
\hline
\textbf{error} & $0.0378$ & $0.0134$ & $0.0077$ & $0.0073$ \\
\hline
\end{tabular}
\bigskip
\caption{MSE over time of the reconstruction for different choices of the regularization parameter for signal denoising example.}
\label{table:1}
\end{center}
\end{table}

\section{Conclusion}
\label{sec:conc}

In this work we have provided new insights into the theory of bilevel learning for inverse problems. In particular our focus was on deriving statistical consistency results with respect to the data limit of the regularization parameter $\lambda$. This was considered for both the offline and online representations of the bilevel problem. For the online version we used and motivated stochastic gradient descent as the choice of optimizer, as it is well known to reduce the computational time required compared to other methodologies. To test our theory we ran numerical experiments on various PDEs which not only verified the theory, but clarified that adapting the regularization parameter $\lambda$ outperforms that of a fixed value. 
Our results in this article provide numerous directions for future, both practically and analytically.}
\begin{itemize}
\item One direction is to consider a fully Bayesian approach, or understanding, to bilevel learning. In the context of statistical inverse problems, this could be related to treating $\lambda$ as a hyperparameter of the underlying unknown. This is referred to as hierarchical learning \cite{PRS07} which aims to improve the overall accuracy of the reconstruction \cite{ABPS14,DIS17}.
\item  Another potential direction is to understand statistical consistency from other choices of regularization. Answering this for other penalty terms, such as $L_1$, total variation and adversarial \cite{LOB18} (based on neural networks), is of importance and interest in numerous applications \cite{AMOS19}. A potential first step in this direction would be to consider the well-known elastic-net regularization \cite{EHN96}, which combines both $L_1$ and Tikhonov regularization. Of course to consider this one would need to modify the assumptions on convexity. 
\item Finally one could propose using alternative optimizers, which provide a lower computational cost. A natural choice would be derivative-free optimization \cite{ER20}. One potential optimizer could be ensemble Kalman inversion \cite{CSY20}, a recent derivative-free methodology, which is of particular interest to the authors. In particular as EKI has been used in hierarchical settings \cite{CIRS18,NKC18}, the reduction in cost could be combined with the hierarchical motivation discussed above. 
\end{itemize}

\section*{Acknowledgements} 
NKC acknowledges a Singapore Ministry of Education Academic Research Funds Tier 2 grant [MOE2016-T2-2-135] and KAUST baseline funding. SW is grateful to the DFG RTG1953 "Statistical Modeling of Complex Systems and Processes" for funding of this research. The research of XTT is supported by the National University of Singapore grant R-146-000-292-114.

\appendix

\section{Proofs of offline consistency analysis}
\subsection{General framework}

We start with the proof for the general framework:
\begin{proof}[Proof of Proposition \ref{prop:cond_conv}]
To simplify the mathematical notation, we use  $z$ to denote the data couple $(u,y)$, and use $f$ to denote the data loss function
\[
f(\lambda,z)=\calL_\calU(u_\lambda(y),u).
\]

When $\lambdahat_n\in \mathcal D$, we apply the fundamental theorem of calculus on $\partial_{\lambda} \Fhat_n$, and find
\[
\partial_{\lambda}\Fhat_n(\lambda_\ast)=\partial_{\lambda}\Fhat_n(\lambdahat)+\int^1_0 \partial^2_{\lambda} \Fhat_n(s\lambda_\ast+(1-s)\lambdahat)(\lambda_\ast-\lambdahat_n)ds=A_F(\lambdahat_n-\lambda_\ast),
\]
where
\[
A_F:=\int^1_0 \partial^2_{\lambda} \Fhat_n ((1-s)\lambdahat_n+s\lambda_\ast)ds\succeq c_o I. 
\]
Note that 
\begin{align*}
{\bf 0}=\partial_{\lambda}F(\lambda_\ast)&=\partial_{\lambda}F(\lambda_\ast)-\partial_{\lambda} \Fhat_n(\lambda_\ast)+\partial_{\lambda} \Fhat_n(\lambda_\ast)\\
&=A_F(\lambdahat_n-\lambda_\ast)+\partial_{\lambda} F(\lambda_\ast)-\partial_{\lambda} \Fhat_n(\lambda_\ast).
\end{align*}
We can reorganize this as
\[
-\left(\partial_{\lambda} \Fhat_n(\lambda_\ast)-\partial_{\lambda} F(\lambda_\ast)\right)=A_F(\lambdahat_n-\lambda_\ast),
\]
As a consequence, we now have a formula for the point estimation error $\lambda_\ast-\lambdahat_n$.
\begin{equation}
\label{tmp:w-w}
\|\lambda_\ast-\lambdahat\|=\left\|A_F^{-1}\left(\partial_{\lambda} \Fhat_n(\lambda_\ast)-\partial_{\lambda} F(\lambda_\ast)\right)\right\|\leq c_0^{-1}\left\|\partial_{\lambda} \Fhat_n(\lambda_\ast)-\partial_{\lambda} F(\lambda_\ast)\right\|.
\end{equation}
Note that by using $\partial_\lambda\E f(\lambda,Z) = \E\partial_\lambda f(\lambda,Z)$, see \cite[Theorem 12.5]{RS17},
\[
\partial_{\lambda} \Fhat_n (\lambda_\ast)-\partial_{\lambda} F(\lambda_\ast)=\frac1n\sum_{i=1}^n \partial_{\lambda}f(\lambda_\ast,z_i)-\E \partial_{\lambda} f(\lambda_\ast,Z).
\]
So 
\[
\E \|\partial_{\lambda} \Fhat_n (\lambda_\ast)-\partial_{\lambda} F(\lambda_\ast)\|^2=\frac1n\text{tr}(\text{Var}(\partial_{\lambda}f(\lambda_\ast,Z))).
\]
And our second claim follows by Cauchy-Schwarz
\[
\E 1_{\calA_n} \|\partial_{\lambda} \Fhat_n (\lambda_\ast)-\partial_{\lambda} F(\lambda_\ast)\|\leq \sqrt{\E \|\partial_{\lambda} \Fhat_n (\lambda_\ast)-\partial_{\lambda} F(\lambda_\ast)\|^2}.
\]
\end{proof}

\subsection{Formulas for the Linear inverse problem}

Next, we apply Proposition \ref{prop:cond_conv} to linear the inverse problem {under Assumption~\ref{aspt:subGaussian}}. 

{
The solution of the Tikhonov regularized optimization problem (without assuming any distribution on $y_i$ and $\xi_i$ respectivelly) in the linear setting can be written as 
$$ u_\lambda(y_i) = (A^\top \Gamma^{-1} A + \lambda C_0^{-1})^{-1} A^\top \Gamma^{-1} y_i,$$ and we consider the difference
$$  \tilde u_i -u_\lambda(y_i)\overset{d}{=} - (A^\top\Gamma^{-1}A +\lambda C^{-1}_0)^{-1} A^\top\Gamma^{-1}(Au_i-\Gamma^{1/2}\xi_i)+u_i.$$
}

Denote $D:=C_0^{1/2}A^\top\Gamma^{-1/2},\Omega_0=C_0^{-1}, v_i=\Omega^{1/2}_0u_i$, and $\xi_i=\Gamma^{-1/2}(A u_i-y_i)\sim \mathcal{N}(0,I)$
and note that 
\begin{align*}
%&-(A^\top\Gamma^{-1}A +\lambda C^{-1}_0)^{-1} A^\top\Gamma^{-1}y_i+u_i\\
&(A^\top\Gamma^{-1}A +\lambda C^{-1}_0)^{-1} A^\top\Gamma^{-1}(Au_i-\Gamma^{1/2}\xi_i)+u_i\\
 &=(A^\top\Gamma^{-1}A +\lambda C^{-1}_0)^{-1} (\lambda C_0^{-1}u_i+A^\top \Gamma^{-1/2}\xi_i)\\
&=C_0^{1/2}(C_0^{1/2}A^\top\Gamma^{-1}A C_0^{1/2}+\lambda I)^{-1} C_0^{1/2} (\lambda C_0^{-1}u_i+A^\top \Gamma^{-1/2}\xi_i)\\
&=C_0^{1/2}(DD^\top +\lambda I)^{-1}(\lambda v_i+D \xi_i).
\end{align*}
Therefore we define
\[
Q_\lambda=(DD^\top +\lambda I)^{-1},
\]
and the data loss can be written as
\begin{align*}
f(\lambda,z)&=\Tr(  Q_\lambda C_0Q_\lambda (\lambda v+D\xi)(\lambda v+D\xi)^\top )\\
&=\Tr( Q_\lambda C_0Q_\lambda(\lambda^2v v^\top+2\lambda D\xi v^\top+D\xi\xi^\top D^\top)).
\end{align*}
We further define the following quantities
\[
P_1=Q_\lambda C_0Q_\lambda,\quad P_2=\frac{\partial P_1}{\partial \lambda}=-(Q_\lambda^2C_0Q_\lambda+Q_\lambda C_0Q^2_\lambda),
\]
\[
P_3=\frac{\partial P_2}{\partial \lambda}=2(Q_\lambda^3C_0Q_\lambda+Q_\lambda^2 C_0 Q^2_\lambda+Q_\lambda C_0Q^3_\lambda),
\]
\[
P_4=\frac{\partial P_3}{\partial \lambda}=-6(Q_\lambda^4C_0Q_\lambda+Q_\lambda^3 C_0 Q_\lambda^2+Q_\lambda^2 C_0 Q_\lambda^3+Q_\lambda C_0Q^4_\lambda).
\]
Note that $\|Q_\lambda\|\leq \lambda^{-1}$, so we have
\[
|\Tr(Q_\lambda^k C_0 Q_\lambda^j)|=|\Tr(C_0 Q_\lambda^{j+k})|\leq \frac{1}{\lambda^{j+k}}\Tr(C_0),
\]
\[
\|Q_\lambda^k C_0 Q_\lambda^j\|\leq \|Q_\lambda\|^{j+k}\|C_0\|\leq \frac{1}{\lambda^{j+k}}\|C_0\|,
\]
\[
\|Q_\lambda^k C_0 Q_\lambda^j\|_F\leq \|Q_\lambda^k\| \|C_0 Q_\lambda^j\|_F\leq  \|Q_\lambda\|^{j+k}\|C_0\|_F\leq \frac{1}{\lambda^{j+k}}\|C_0\|_F.
\]
In conclusion, for function $T$ being $T(A)=|\Tr(A)|$ or $T(A)=\|A\|$ or $T(A)=\|A\|_F$, we all have
\[
T(P_k)\leq (\tfrac{2}{\lambda})^{k+1} T(C_0).
\]
In particular, under Assumption \ref{aspt:DI}, $T(P_k)$ will be bounded by constants independent of the dimension. 

Using these notations, we have
\begin{align*}
\partial_\lambda f(\lambda,z)=\Tr\bigg( & P_2(\lambda^2 v v^\top+2\lambda D\xi v^\top+D\xi\xi^\top D^\top)+P_1(2\lambda vv^\top+2 D\xi v^\top) \bigg).
\end{align*}
\begin{align*}
\partial^2_\lambda f(\lambda,z)=\Tr\bigg( & P_3(\lambda^2 vv^\top+2\lambda D\xi v^\top+D\xi\xi^\top D^\top) +4P_2(\lambda vv^\top+D\xi v^\top)+2P_1 vv^\top \bigg). 
%=2\Tr\bigg(& (\Omega_0^2-4\lambda Q_\lambda^3\Omega_0+Q_\lambda^2)vv^\top+2(3\lambda Q_\lambda^4\Omega^2_0-2Q_\lambda^3\Omega_0)D\xi v^\top+3Q_\lambda^4 \Omega_0^2D\xi\xi^\top D^\top\bigg)
\end{align*}

\begin{align*}
\partial^3_\lambda f(\lambda,z)=\Tr\bigg( & P_4(\lambda^2 vv^\top+2\lambda D\xi v^\top+D\xi\xi^\top D^\top) +6P_3(\lambda vv^\top+D\xi v^\top)+6P_2 vv^\top \bigg). 
%=2\Tr\bigg(& (\Omega_0^2-4\lambda Q_\lambda^3\Omega_0+Q_\lambda^2)vv^\top+2(3\lambda Q_\lambda^4\Omega^2_0-2Q_\lambda^3\Omega_0)D\xi v^\top+3Q_\lambda^4 \Omega_0^2D\xi\xi^\top D^\top\bigg)
\end{align*}

\subsection{Pointwise consistency analysis}
To apply Proposition \ref{prop:cond_conv}, it is necessary to show the gradient of $\Fhat_n(\lambda)$ is a good approximation of $\partial F(\lambda)$ at $\lambda=\lambda_\ast$ with high probability. This is actually true for general $\lambda$.

To show this, we start by showing the sample covariance are consistent.
\begin{lem}\label{lem:HW}
{Let $z_i = (z_1^i,\dots,z_d^{i})^\top\in\R^d$ with $z_l^i$ i.i.d. random variables in $i=1,\dots,n$ as well as in $l=1,\dots,d$ with $\E z_1^1= 0$, $\E|z_1^1|^2 = 1$ and $$\sup_{p\ge1}\ p^{-1/2}\E[|z_1^1|^p]^{1/p}\le C_v,$$
and $\xi = (\xi_1^i,\dots, \xi_K^i)^\top\in\R^K$ with $\xi_l^i$ i.i.d. random variables in $i=1,\dots,n$ as well as in $l=1,\dots,K$ with $\E\xi_1^1 = 0$, $\E|\xi_1^1|^2 = 1$ and $$\sup_{p\ge1}\ p^{-1/2}\E[|\xi_1^1|^p]^{1/p}\le C_\xi.$$
}
For
\[
C_n = \frac1n \sum\limits_{i=1}^n z_i z_i^\top, \quad B_n= \frac1n\sum\limits_{i=1}^n z_i \xi_i^\top,
\]
the following holds
\[
\P(|\Tr(\Sigma C_n)-\Tr(\Sigma)|>t)\leq 2\exp\left(-cn\min \left( \frac{t^2}{\|\Sigma\|^2_F},\frac{t}{\|\Sigma\|}\right)\right),
\]
\[
\P(|\Tr(\Sigma B_n)|>t)\leq 2\exp\left(-cn\min \left( \frac{t^2}{\|\Sigma\|^2_F},\frac{t}{\|\Sigma\|}\right)\right).
\]
Moreover, there is a universal constant $C$ such that 
\[
\E |\Tr(\Sigma C_n)|^2\leq 2\Tr(\Sigma)^2 + C \left(\frac{\|\Sigma\|^3_F}{n^{3/2}}+\frac{\|\Sigma\|^3}{n^{3}}\right). 
\]
\[
\E |\Tr(\Sigma B_n)|^2\leq C \left(\frac{\|\Sigma\|^3_F}{n^{3/2}}+\frac{\|\Sigma\|^3}{n^{3}}\right). 
\]
\end{lem}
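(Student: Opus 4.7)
The plan is to prove all three bounds through a single mechanism: the Hanson--Wright inequality of Rudelson--Vershynin for quadratic forms in independent sub-Gaussian entries, combined with a ``stacking'' trick to convert the sum over $i=1,\dots,n$ into a single quadratic form, and a polarization/augmentation trick to handle the bilinear cross term. The symmetry and moment assumptions on the scalar components $z^i_l, \xi^i_l$ in the hypothesis are exactly the hypotheses needed to invoke the Hanson--Wright inequality.

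\textbf{Step 1 (quadratic term).} Using the cyclic property of the trace, rewrite
\ban
\Tr(\Sigma C_n) - \Tr(\Sigma) \;=\; \frac1n\sum_{i=1}^n \bigl(z_i^\top \Sigma z_i - \E[z_i^\top \Sigma z_i]\bigr).
\enan
Stack $Z = (z_1^\top,\dots,z_n^\top)^\top \in \R^{nd}$ and set $A := I_n \otimes \Sigma$. Then $\sum_i z_i^\top \Sigma z_i = Z^\top A Z$, and $Z$ has independent sub-Gaussian entries. The Hanson--Wright inequality then gives, for all $s>0$,
\ban
\P\bigl(|Z^\top A Z - \E Z^\top A Z| > s\bigr) \leq 2\exp\!\Bigl(-c\min\bigl(s^2/\|A\|_F^2,\; s/\|A\|\bigr)\Bigr).
\enan
Using the block structure, $\|A\|_F^2 = n\|\Sigma\|_F^2$ and $\|A\| = \|\Sigma\|$. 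Substituting $s = nt$ and dividing by $n$ delivers the first tail bound.

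\textbf{Step 2 (bilinear term).} For $\Tr(\Sigma B_n) = \frac1n\sum_i \xi_i^\top \Sigma z_i$, form the augmented vector $w_i := (z_i^\top,\xi_i^\top)^\top$, which still has independent sub-Gaussian entries because $z_i$ and $\xi_i$ are independent. Define the block matrix
\ban
M := \tfrac12\begin{pmatrix} 0 & \Sigma^\top \\ \Sigma & 0 \end{pmatrix},
\enan
so that $w_i^\top M w_i = \xi_i^\top \Sigma z_i$, and $\E[w_i^\top M w_i] = 0$ by independence and mean-zero of the entries. Stacking $W := (w_1^\top,\dots,w_n^\top)^\top$ and applying Hanson--Wright to $W^\top (I_n\otimes M) W$ with $\|I_n\otimes M\|_F^2 = \tfrac{n}{2}\|\Sigma\|_F^2$ and $\|I_n\otimes M\| = \tfrac12\|\Sigma\|$ yields the second tail bound (after absorbing the factors of $1/2$ into the constant $c$).

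\textbf{Step 3 (second moment bounds).} Writing $X := |\Tr(\Sigma C_n) - \Tr(\Sigma)|$, integrate the tail:
\ban
\E X^2 = \int_0^\infty 2t\,\P(X>t)\,dt,
\enan
and split the integral at the crossover $t_\ast = \|\Sigma\|_F^2/\|\Sigma\|$ between the sub-Gaussian ($t\le t_\ast$) and sub-exponential ($t>t_\ast$) regimes. Each piece integrates to an explicit constant multiple of $\|\Sigma\|_F^{a}\|\Sigma\|^{b} n^{-c}$; a standard bookkeeping step then produces a bound of the form claimed (via $|\Tr(\Sigma C_n)|^2 \le 2\Tr(\Sigma)^2 + 2X^2$). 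The same argument applied to the bilinear tail gives the corresponding bound on $\E|\Tr(\Sigma B_n)|^2$, now without the leading $\Tr(\Sigma)^2$ because $\E[\Tr(\Sigma B_n)] = 0$.

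\textbf{Main obstacle.} The mechanical part — stacking and Hanson--Wright — is routine; the delicate part is verifying that the augmented vector $w_i$ has sub-Gaussian entries in the precise sense required by Hanson--Wright, and that the symmetry clause of Assumption~\ref{aspt:subGaussian} is properly invoked to absorb the off-diagonal block of $M$. A secondary subtlety is matching the exact polynomial shape in the stated moment bound: the natural tail integration gives terms of the form $\|\Sigma\|_F^2/n$ and $\|\Sigma\|^2/n^2$, so obtaining precisely the exponents advertised may require either an interpolation between $L^2$ and a higher moment or a careful use of $\|\Sigma\| \le \|\Sigma\|_F$ in the appropriate regime; I would carry the computation through with these inequalities in hand and then round up to the stated form.
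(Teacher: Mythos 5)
Your proposal follows essentially the same route as the paper's own proof: stack the $n$ samples into a single long vector, realize $\Tr(\Sigma C_n)$ and $\Tr(\Sigma B_n)$ as quadratic forms $Z^\top(\tfrac1n D)Z$ with a block-diagonal matrix $D$ (the paper uses the upper-triangular block $\begin{pmatrix}0&\Sigma\\0&0\end{pmatrix}$ where you use its symmetrization; both give the same quadratic form and Hanson--Wright does not require symmetry), compute $\|\tfrac1n D\|_F^2=\tfrac1n\|\Sigma\|_F^2$ and $\|\tfrac1n D\|=\tfrac1n\|\Sigma\|$, and then integrate the tail for the second moments. The tail bounds in Steps 1 and 2 are fine as you have them.

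On the one point where you hedge: do not try to force the cubic exponents in the stated moment bounds. Your instinct is right that the layer-cake identity $\E X^2=\int_0^\infty 2t\,\P(X>t)\,dt$ yields $\E T^2\leq C\bigl(\tfrac{\|\Sigma\|_F^2}{n}+\tfrac{\|\Sigma\|^2}{n^2}\bigr)$, and this cannot in general be ``rounded up'' to $C\bigl(\tfrac{\|\Sigma\|_F^3}{n^{3/2}}+\tfrac{\|\Sigma\|^3}{n^3}\bigr)$ (take $\|\Sigma\|_F$ fixed and $n$ large: the former is of order $n^{-1}$, the latter $n^{-3/2}$). The cubic form in the lemma arises because the paper's proof writes $\E T^2\leq\int_0^\infty t^2\,\P(|T|>t)\,dt$, which is not a valid inequality ($\int_0^\infty t^2\,\P(|T|>t)\,dt=\tfrac13\E|T|^3$, and $\E T^2\leq\E|T|^3$ fails when $|T|$ is small). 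Your quadratic version is the correct conclusion, and it is all that is needed downstream, where these moment bounds are only used to produce dimension-independent constants; state and prove the bound with $\tfrac{\|\Sigma\|_F^2}{n}+\tfrac{\|\Sigma\|^2}{n^2}$ and the rest of the argument goes through unchanged.
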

\begin{proof}
 Note 
\[
\Tr(\Sigma z_iz_i^\top)=z_i^\top\Sigma z_i.
\]
We define the block-diagonal matrix $D_\Sigma\in \R^{nd\times nd}$ which consists of $n$ blocks of $\Sigma$, and $Z=[z_1;z_2;\cdots;z_n]\in \R^{nd}$. Note that
\[
\Tr(\Sigma C_n)=Z^\top(\tfrac1nD_\Sigma) Z.
\]
By the Hanson--Wright inequality \cite[Theorem 1.1]{RV13}, we obtain for some constants $c_0$ and $K$,
\[
\P(|\Tr(\Sigma C_n)-\Tr(\Sigma)|>t)\leq 2\exp\bigg(-c_0\min \bigg(\frac{t^2}{K^4\|\tfrac1nD_{\Sigma}\|_F^2},\frac{t}{K^2\|\tfrac1nD_{\Sigma}\|}\bigg)\bigg).
\]
Note that 
\[
\|\tfrac1nD_{\Sigma}\|_F^2=\frac1{n^2}\|D_{\Sigma}\|^2_F=\frac1n\|\Sigma\|^2_F,
\]
\[
\|\tfrac1nD_{\Sigma}\|=\frac1n\|D_\Sigma\|=\frac1n \|\Sigma\|. 
\]
So the first assertion is proved.
For the second claim we first note that 
\[
\Tr(\Sigma \xi_iz_i^\top)=z_i^\top\Sigma \xi_i=\begin{bmatrix} z_i^\top, \xi_i^\top\end{bmatrix}
Q\begin{bmatrix} z_i\\ \xi_i\end{bmatrix},\quad Q=\begin{bmatrix} 0 &\Sigma \\ 0 &0\end{bmatrix}\in \R^{(d+d_y)\times (d+d_y)}.
\]
Consider then a block-diagonal matrix $D_Q\in \R^{n(d+y)\times n(d+d_y)}$ which consists of $n$ blocks of $Q$, and $Z=[z_1; \xi_1;z_2;\xi_2;\cdots;z_n;\xi_n]\in \R^{n(d+d_y)}$. Then we can verify that
\[
\Tr(\Sigma B_n)=Z^\top(\tfrac1nD_Q) Z.
\]
By the Hanson--Wright inequality \cite[Theorem 1.1]{RV13}, we have 
\[
\P(|\Tr(\Sigma B_n)|>t)\leq 2\exp\bigg(-c\min \bigg(\frac{t^2}{K^4\|\tfrac1nD_{Q}\|_F^2},\frac{t}{K^2\|\tfrac1nD_{Q}\|}\bigg)\bigg).
\]
Again we note that 
\[
\|\tfrac1nD_{Q}\|_F^2=\frac1{n^2}\|D_{Q}\|^2_F=\frac1n\|\Sigma\|^2_F,
\]
\[
\|\tfrac1nD_{Q}\|=\frac1n\|D_Q\|=\frac1n \|Q\|. 
\]
and finally end up with 
\[
\P(|\Tr(\Sigma B_n)|>t)\leq 2\exp\left(-cn\min \bigg(\frac{t^2}{K^4\|\Sigma\|_F^2},\frac{t}{K^2\|\Sigma\|}\bigg)\right).
\]
For the bounds of second moments, let $T=\Tr(\Sigma C_n)-\Tr(\Sigma)$, then 
\[
\E |\Tr(\Sigma C_n)|^2\leq 2|\Tr(\Sigma)|^2+2 \E T^2.   
\]
Then note that by the probability bound,
\begin{align*}
\E T^2&\leq \int^\infty_0 t^2 \P( |T|>t)dt\\
&\leq 2\int t^2 \exp\left(-cn\min \left( \frac{t^2}{\|\Sigma\|^2_F},\frac{t}{\|\Sigma\|}\right)\right)dt\\
&\leq 2\int^\infty_0 t^2 \exp\left(-cn \frac{t^2}{\|\Sigma\|^2_F}\right)dt+2\int^\infty_0t^2\exp\left(-cn\frac{t}{\|\Sigma\|}\right)dt.\\
\end{align*}
We let $s=\frac{t\sqrt{cn}}{\|\Sigma\|_F}$ and find
\[
\int^\infty_0 t^2 \exp\left(-cn \frac{t^2}{\|\Sigma\|^2_F}\right)dt=
\frac{\|\Sigma\|^3_F}{(cn)^{\frac{3}{2}}}\int^\infty_0 s^2 \exp\left(-s^2\right)ds. 
\]
We then let $s=cn\frac{t}{\|\Sigma\|}$ and find 
\[
\int^\infty_0t^2\exp\left(-cn\frac{t}{\|\Sigma\|}\right)dt=
\frac{\|\Sigma\|^3}{(cn)^3}\int^\infty_0s^2\exp\left(-s\right)ds. 
\]
So there is a universal constant $C$ such that 
\[
\E T^2\leq \frac C 2\left(\frac{\|\Sigma\|^3_F}{n^{3/2}}+\frac{\|\Sigma\|^3}{n^{3}}\right). 
\]
The bound for $\E |\Tr (\Sigma B_n)| $ follows identically. 

\end{proof}

By the previous result we obtain the following convergence results.

\begin{lem}\label{cor:conv_derivatives}
The empirical loss function $\Fhat_n$ is $C^3$ in $\lambda$, and for any $\lambda\in(\lambda_l,\lambda_r)$, there exists constants $C,\ c >0$ such that for all $\varepsilon>0$
\[
\P(|\partial_{\lambda}\Fhat_n(\lambda) - {(1-\lambda/\lambda_\ast)\Tr(P_2DD^{\top} )}|>\varepsilon)\le C \exp(-nc\min\{\varepsilon,\varepsilon^2\}),
\]
and
\[
\P(|\partial_{\lambda}^2\Fhat_n(\lambda) - \Tr\left((\frac{\lambda^2}{\lambda_\ast}-\lambda)P_3+(\frac{4\lambda}{\lambda_\ast}-3\lambda) P_2+\frac{2}{\lambda_\ast} P_1\right)|>\varepsilon)\le C \exp(-cn\min\{\varepsilon,\varepsilon^2\}).
\]
%where we have defined $D:=A^\top \Gamma^{-1/2}$ and $Q_\lambda=(DD^\top +\lambda \Omega_0)^{-1}$.
 Under Assumption \ref{aspt:DI}, both $C$ and $c$ are independent of ambient dimension $d$. 
\end{lem}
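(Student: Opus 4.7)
The natural strategy is to decompose $\partial_\lambda \Fhat_n$ and $\partial_\lambda^2\Fhat_n$ into a bounded number of empirical trace functionals and then apply the Hanson--Wright type Lemma~\ref{lem:HW} to each piece. The $C^3$ regularity of $\Fhat_n$ is essentially free: the explicit formulas in the excerpt show that $f(\lambda,z)$ and its first three $\lambda$-derivatives are finite sums of products of $\lambda^j$ with traces involving $P_k(\lambda)$, and each $P_k(\lambda)$ is a rational function of $\lambda$ on $(\lambda_l,\lambda_u)$ with denominators bounded away from zero (since $Q_\lambda\preceq \lambda_l^{-1} I$). So differentiation under the sum defining $\Fhat_n$ is legitimate.

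Next I would write, using $v_i=C_0^{-1/2}u_i$ (so that $v_i$ is $(0,\tfrac1{\lambda_\ast}I)$-sub Gaussian) and $\xi_i$ being $(0,I)$-sub Gaussian,
\[
\partial_\lambda \Fhat_n(\lambda) = \lambda^2\,\Tr(P_2\widehat V_n)+2\lambda\,\Tr(P_2 D\widehat B_n)+\Tr(P_2 D\widehat\Xi_n D^\top)+2\lambda\,\Tr(P_1\widehat V_n)+2\,\Tr(P_1 D\widehat B_n),
\]
where $\widehat V_n=\tfrac1n\sum v_iv_i^\top$, $\widehat\Xi_n=\tfrac1n\sum\xi_i\xi_i^\top$, and $\widehat B_n=\tfrac1n\sum \xi_iv_i^\top$. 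After rescaling $\widetilde v_i:=\sqrt{\lambda_\ast}\,v_i$ so that its components satisfy Assumption~\ref{aspt:subGaussian}, each of these is a trace functional of the form $\Tr(\Sigma \widehat C_n)$ or $\Tr(\Sigma\widehat B_n)$ to which Lemma~\ref{lem:HW} applies with $\Sigma$ one of $P_1$, $P_2$, $P_1D$, $P_2D$, $P_2 DD^\top$ (up to constants). Using the bounds $\|P_k\|,\|P_k\|_F,|\Tr(P_k)|\le (2/\lambda_l)^{k+1}\,T(C_0)$ from the excerpt (applied to the relevant norm), every such $\|\Sigma\|_F$ and $\|\Sigma\|$ is controlled by a dimension independent constant under Assumption~\ref{aspt:DI}. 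A union bound over the at most five (respectively six, for $\partial_\lambda^2$) summands then yields a total tail bound of the form $C\exp(-cn\min(\eps,\eps^2))$, since each individual bound is of this shape with constants polynomial in $\lambda_l^{-1}$ and the norms of $C_0$, $A^\top\Gamma^{-1}A$, $\Gamma$.

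The last step is to verify that the centered version of the decomposition matches the stated limits. Using $\E\widehat V_n=\tfrac1{\lambda_\ast}I$, $\E\widehat\Xi_n=I$, $\E\widehat B_n=0$, one computes
\[
\E\partial_\lambda f(\lambda,z)=\tfrac{\lambda^2}{\lambda_\ast}\Tr(P_2)+\Tr(P_2 DD^\top)+\tfrac{2\lambda}{\lambda_\ast}\Tr(P_1).
\]
The key simplification is the algebraic identity $\Tr(P_2 DD^\top)=-2\Tr(P_1)-\lambda\Tr(P_2)$, which follows from the resolvent relation $Q_\lambda DD^\top = I-\lambda Q_\lambda$ combined with cyclicity of the trace applied to $P_2=-(Q_\lambda^2 C_0Q_\lambda+Q_\lambda C_0Q_\lambda^2)$. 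Substituting this identity collapses the three terms to $(1-\lambda/\lambda_\ast)\Tr(P_2 DD^\top)$, giving the claim for $\partial_\lambda\Fhat_n$. For $\partial_\lambda^2\Fhat_n$ I would repeat the analogous computation, using the second resolvent identity $\Tr(P_3 DD^\top)=-3\Tr(P_2)-\lambda\Tr(P_3)$ (derived the same way from $Q_\lambda^k DD^\top=Q_\lambda^{k-1}-\lambda Q_\lambda^k$ and cyclicity) to match the stated expression.

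The only real obstacle, beyond keeping track of the algebra, is that the cross term $\Tr(P_2 D\widehat B_n)$ involves the product of two independent sub-Gaussian vectors rather than a single quadratic form; this is precisely what the second assertion of Lemma~\ref{lem:HW} handles, via the block-matrix trick in its proof. Once that is in place, everything else is a bookkeeping exercise, and the final constants $C,c$ depend only on $\lambda_l,\lambda_u,\lambda_\ast$ and on $\Tr(C_0),\|C_0\|_F,\|A^\top\Gamma^{-1}A\|$, so they are dimension independent under Assumption~\ref{aspt:DI}.
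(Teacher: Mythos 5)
Your proposal is correct and follows essentially the same route as the paper's proof: the same decomposition of $\partial_\lambda\Fhat_n$ and $\partial^2_\lambda\Fhat_n$ into a fixed number of trace functionals $\Tr(\Sigma \widehat V_n)$, $\Tr(\Psi \widehat B_n)$, $\Tr(\Sigma \widehat\Xi_n)$, the same application of the Hanson--Wright lemma (Lemma~\ref{lem:HW}) to each term followed by a union bound, the same resolvent identities $\Tr(P_2DD^\top)=-2\Tr(P_1)-\lambda\Tr(P_2)$ and $\Tr(P_3DD^\top)=-3\Tr(P_2)-\lambda\Tr(P_3)$ to collapse the expectation to the stated centers, and the same $P_k$-norm bounds for dimension independence. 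Your explicit rescaling of $v_i$ to unit-variance components before invoking Lemma~\ref{lem:HW} is a small point of care that the paper glosses over, but it does not change the argument.
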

\begin{proof}
Since $\Fhat_n(\lambda)=\frac1n\sum_{i=1}^n f(\lambda,\zeta_i)$, if we let
\[
C_v= \frac1n\sum_{i=1}^n v_i v_i^\top,\quad  B=\frac1n\sum_{i=1}^n \xi_i v_i^\top,\quad C_{\xi}=\frac1n\sum_{i=1}^n \xi_i\xi_i^\top,
\]
then
\begin{align}
\label{tmp:F1}
\partial_\lambda \Fhat_n(\lambda)	&=\Tr\bigg(P_2(\lambda^2 C_v+2\lambda DB+DC_{\xi} D^\top)+P_1(2\lambda C_v+2 DB) \bigg)\\
&=\Tr\bigg((P_2\lambda^2+2\lambda P_1) C_v+(2P_1+2\lambda P_2) DB+D^\top P_2DC_{\xi}\bigg)
%						&=-2\lambda^2\Tr(Q_\lambda^3 \Omega_0C_v)-4\lambda\Tr(Q_\lambda^3\Omega_0DB)-2\Tr(D^\top Q_\lambda^3 \Omega_0D C_\xi) + 2\lambda\Tr(Q_\lambda^2C_v) \\
%\notag
%&\quad+2\Tr(Q_\lambda^2DB).
%\label{tmp:F1}						&=-2\lambda^2/\lambda_\ast\Tr(\Omega_0^{1/2}Q_\lambda^3 \Omega_0^{3/2}C'_v)-4\lambda/\sqrt{\lambda_\ast}\Tr(\Omega_0^{1/2} Q_\lambda^3\Omega_0DB')-2\Tr(D^\top Q_\lambda^3\Omega_0 DC_\xi)\\ 
%&+ 2\lambda/\lambda_\ast\Tr(\Omega_0^{1/2}Q_\lambda^2 \Omega_0^{1/2}C'_v) 
%\notag
%+2\Tr(\Omega_0^{1/2}Q_\lambda^2DB').											
\end{align}
We note that
\begin{align*}
\E \partial_{\lambda}\Fhat_n(\lambda)&=\Tr\bigg(P_2(\lambda^2/\lambda_\ast I +D D^\top)+2\lambda P_1/\lambda_\ast \bigg)\\
&=\Tr\bigg(-(Q_\lambda^2C_0 Q_\lambda+Q_\lambda C_0 Q_\lambda^2)(\tfrac{\lambda^2}{\lambda_\ast} I +D D^\top)+2\tfrac{\lambda}{\lambda_\ast} Q_\lambda C_0Q_\lambda \bigg)\\
&=\Tr\bigg( (1-\tfrac{\lambda}{\lambda_\ast})P_2D D^\top \bigg).\\
\end{align*}

Moreover, in \eqref{tmp:F1}, $\partial_\lambda \Fhat_n$ can be written as sum of $\Tr(\Sigma_1 C_v), \Tr(\Sigma_2 B)$ and $\Tr(\Sigma_3 C_\xi)$ for certain matrices $\Sigma$ such as
\[
\Sigma_1=(P_2\lambda^2+2\lambda P_1),\quad \Sigma_2=(2P_1+2\lambda P_2) D,\quad \Sigma_3=D^\top P_2 D.
\]
Note that for any random variables $A_k$
\begin{equation*}
\P\left(|\sum\limits_{k=1}^m(A_k-\E A_k)|>\varepsilon\right)\le\sum\limits_{k=1}^m\P(|A_k-\E A_k|>\varepsilon/m).
\end{equation*}
Therefore we can apply Lemma \ref{lem:HW} at each trace term, and bound its probability of deviating from its mean. Therefore, we can find constants $C_1,c$ such that 
\[
\P(|\partial_{\lambda}\Fhat_n(\lambda)-(1-\lambda/\lambda_\ast)\Tr(P_2 DD^\top)|>\varepsilon)\leq C_1 \exp\left(-cn\min(\varepsilon^2,\varepsilon)\right).
\]

If Assumption \ref{aspt:DI} is assumed, note that 
\[
\|Q_\lambda\|\le \frac1\lambda\|C_0\|\le\frac 1{\lambda_l}\|C_0\|,
\]
 for all $\lambda\geq \lambda_l$  respectively. So using norm inequalities $\|AB\|\leq \|A\|\|B\|$ and $\|AB\|_F\leq \|A\|\|B\|_F$, we can verify that
 all $P_i, i=1,\ldots, 4$ have dimension independent operator norm and Frobenius norms.  Therefore all $\Sigma_i,i=1,\ldots,3$ have dimension independent operator norm and Frobenius norms.  So $C_1$ and $c$ are dimension independent. 

For the second claim,
\begin{align*}
\partial^2_\lambda f(\lambda,z)=\Tr\bigg( & P_3(\lambda^2 vv^\top+2\lambda D\xi v^\top+D\xi\xi^\top D^\top) +4P_2(\lambda vv^\top+D\xi v^\top)+2P_1 vv^\top \bigg). 
%=2\Tr\bigg(& (\Omega_0^2-4\lambda Q_\lambda^3\Omega_0+Q_\lambda^2)vv^\top+2(3\lambda Q_\lambda^4\Omega^2_0-2Q_\lambda^3\Omega_0)D\xi v^\top+3Q_\lambda^4 \Omega_0^2D\xi\xi^\top D^\top\bigg)
\end{align*}
 we find
\begin{equation}
\label{tmp:F2}
\partial^2_\lambda \Fhat_n(\lambda)	= \Tr\bigg( (\lambda^2 P_3+4\lambda P_2+2 P_1)C_v+(2\lambda P_3+4P_2) D B+ D^\top P_3D C_\xi\bigg).
\end{equation}
Therefore, 
\[
\E\partial_{\lambda}^2\Fhat_n(\lambda) = \Tr\left((\lambda^2 P_3+4\lambda P_2+2 P_1)/\lambda_\ast+DD^\top P_3\right).
\]
The deviation probability can also be obtained by analyzing matrices
\[
 \Sigma_1'=(\lambda^2 P_3+4\lambda P_2+2P_1),\quad \Sigma_2'=(2\lambda P_3+4P_2) D,\quad \Sigma_3'=D^\top P_3D.
\]
Note that 
\[
\Tr(Q_\lambda^{-1} P_3)=\Tr(2 Q_\lambda^2 C_0Q_\lambda+Q_\lambda^2 C_0Q_\lambda+Q_\lambda C_0Q^2_\lambda+2Q_\lambda C_0Q^2_\lambda)=-3P_2.
\]
\[
\Tr(\lambda P_2 +2P_1)= \Tr ((Q_\lambda^{-1}-\lambda I)Q^2_\lambda C_0 Q_\lambda+Q_\lambda C_0 Q^2_\lambda (Q_\lambda^{-1}-\lambda I))=-\Tr(DD^\top P_2 ). 
\]
So the average can also be written as 
\begin{align*}
\E\partial_{\lambda}^2\Fhat_n(\lambda) &= \Tr\left((\lambda(\lambda I+DD^\top) P_3+4\lambda P_2+2 P_1)/\lambda_\ast+(1-\lambda/\lambda_\ast)DD^\top P_3\right)\\
&= \Tr\left((-3\lambda P_2+4\lambda P_2+2 P_1)/\lambda_\ast+(1-\lambda/\lambda_\ast)DD^\top P_3\right)\\
&=\Tr\left(-DD^\top P_2/\lambda_\ast+(1-\lambda/\lambda_\ast)DD^\top P_3\right)\\
&=\Tr\left(DD^\top((1-\lambda/\lambda_\ast) P_3-P_2/\lambda_\ast)\right).
\end{align*}

Likewise, we can obtain
\begin{equation*}
\partial^3_\lambda \Fhat_n(\lambda)	=  \Tr\bigg( (\lambda^2 P_4+6\lambda P_3+6P_2)C_v+(2\lambda P_4+6P_2) D B+ D^\top P_4D C_\xi\bigg),
\end{equation*}
and
\begin{equation}
\label{tmp:F3}
\partial^3_\lambda \Fhat_n(\lambda)	=  \Tr\bigg( (\lambda^2 P_4+6\lambda P_3+6P_2)/\lambda_\ast+D^\top P_4D \bigg).
\end{equation}

\end{proof}

\begin{remark}\label{rem:app_convexity}
It is worthwhile to note that 
\[
\partial_{\lambda}^2F(\lambda)=\E\partial_{\lambda}^2\Fhat_n(\lambda)= \Tr\left(DD^\top((1-\lambda/\lambda_\ast) P_3-P_2/\lambda_\ast)\right),
\]
is not always positive, and it can be negative if $\lambda$ is very large. In other words, $F$ is not convex on the real line. Therefore, it is necessary to introduce a local parameter domain where $F$ is convex inside. 
\end{remark}

\subsection{Consistency analysis within an interval}
To apply Proposition \ref{prop:cond_conv}, it is also necessary to show the $\Fhat_n(\lambda)$ is strongly convex in a local region/interval. This can be done using a chaining argument in probability theory. 

First, we show that the empirical loss function has bounded derivatives with high probability. 
\begin{lem}
\label{lem:Lipschitz}
There exists an $L>0$ and $c$ that only depend on $\Tr(C_0), \|C_0\|_F$ and $\|D\|$ such that the following holds true
\[
\P\left(\max_{\lambda_l\leq\lambda\leq \lambda_u} |\partial^k_\lambda \Fhat_n (\lambda)| > L,k=1,2,3\right)\leq 6\exp(- nc).
\] 
Under Assumption \ref{aspt:DI}, $L$ and $c$ are dimension independent constants. 
\end{lem}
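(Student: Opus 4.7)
The plan is to combine pointwise Hanson--Wright concentration with an $\eps$-net argument to upgrade the formulas derived in the proof of Lemma~\ref{cor:conv_derivatives} into a uniform bound on $\lambda\in[\lambda_l,\lambda_u]$. The starting point is the identity, valid for $k=1,2,3$ and extracted from \eqref{tmp:F1}, \eqref{tmp:F2}, \eqref{tmp:F3},
\[
\partial^k_\lambda \Fhat_n(\lambda)=\Tr(\Sigma^{(k)}_1(\lambda) C_v)+\Tr(\Sigma^{(k)}_2(\lambda) B)+\Tr(\Sigma^{(k)}_3(\lambda) C_\xi),
\]
where each $\Sigma^{(k)}_i(\lambda)$ is built polynomially from $P_1,\ldots,P_4,D$ and $\lambda$. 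The first step is deterministic: using $T(P_j)\le (2/\lambda_l)^{j+1}T(C_0)$ for $T\in\{|\Tr(\cdot)|,\|\cdot\|,\|\cdot\|_F\}$ throughout $[\lambda_l,\lambda_u]$, together with submultiplicativity and the constant $\|D\|$, one verifies that $\|\Sigma^{(k)}_i(\lambda)\|$, $\|\Sigma^{(k)}_i(\lambda)\|_F$ and $|\Tr\Sigma^{(k)}_i(\lambda)|$ are all uniformly bounded on $[\lambda_l,\lambda_u]$ by a constant $M$ depending only on $\Tr(C_0)$, $\|C_0\|_F$, $\|D\|$, $\lambda_l$, $\lambda_u$.

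At a fixed $\lambda_0$, Lemma~\ref{lem:HW} applied to each of the three trace terms, combined with the deterministic bound $|\E[\partial^k_\lambda \Fhat_n(\lambda_0)]|\le M(1+1/\lambda_\ast)$ obtained from $\E C_v=\frac{1}{\lambda_\ast}I$, $\E B=0$, $\E C_\xi=I$, yields
\[
\bbP\bigl(|\partial^k_\lambda \Fhat_n(\lambda_0)|>L_0\bigr)\le 6\exp(-c_0 n),
\]
for constants $L_0,c_0$ depending only on $M$ and $\lambda_\ast$.

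The passage to a uniform-in-$\lambda$ bound uses discretization plus Lipschitz continuity. I would pick an $\eps$-net $\mathcal{N}\subset[\lambda_l,\lambda_u]$ of cardinality $|\mathcal{N}|\le(\lambda_u-\lambda_l)/\eps+1$, apply the pointwise estimate at each net point with threshold $L/2$, and bridge gaps using $|\partial^k_\lambda \Fhat_n(\lambda)-\partial^k_\lambda \Fhat_n(\lambda_0)|\le|\lambda-\lambda_0|\sup_{\lambda'}|\partial^{k+1}_\lambda \Fhat_n(\lambda')|$. For $k=1,2$ the controlling next derivative is handled simultaneously by the same lemma, via a union bound over orders. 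For $k=3$ one step further is needed: differentiating \eqref{tmp:F3} once more gives an analogous trace formula involving $P_5$, for which $T(P_5)\le(2/\lambda_l)^{6}T(C_0)$ holds by an identical computation, yielding the required high-probability bound on $\sup_\lambda|\partial^4_\lambda \Fhat_n|$. Choosing $\eps$ of constant order so that the Lipschitz contribution is at most $L/2$, the union bound over $\mathcal{N}$ and $k\in\{1,2,3\}$ gives a failure probability of the form $c_1|\mathcal{N}|\exp(-c_0 n)$; the constant prefactor is then absorbed into the exponent by a slight reduction of $c_0$, producing the stated $6\exp(-cn)$.

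The main obstacle is organizational rather than probabilistic: one must verify that all auxiliary matrices arising along the way---in particular those entering the $\partial^4 \Fhat_n$ estimate used for the Lipschitz step at $k=3$---retain norm and trace bounds expressible solely in terms of $\Tr(C_0),\|C_0\|_F,\|D\|,\lambda_l,\lambda_u$, so that both $L$ and $c$ remain dimension independent under Assumption~\ref{aspt:DI}. No probabilistic input beyond Lemma~\ref{lem:HW} is needed.
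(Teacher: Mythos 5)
Your proposal is correct and follows essentially the same route as the paper: pointwise Hanson--Wright concentration for each trace term (Lemma~\ref{lem:HW}), deterministic bounds $T(P_j)\le(2/\lambda_l)^{j+1}T(C_0)$ on the coefficient matrices uniformly over $[\lambda_l,\lambda_u]$, and a discretization/Lipschitz-bridging step, which is exactly what the paper's Lemma~\ref{lem:chaining} encodes. Your explicit treatment of the $k=3$ case via a $P_5$-type bound on $\partial^4_\lambda \Fhat_n$ spells out a detail the paper's own proof leaves implicit, but the argument is the same.
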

\begin{proof}
Recall that $\|Q_\lambda\|\le \frac1{\lambda_l}$.
From \eqref{tmp:F1}, \eqref{tmp:F2} and \eqref{tmp:F3}, and Lemma \ref{lem:chaining}, we have 
\[
\mathbb{P}(\max_{\lambda_l\leq\lambda\leq \lambda_u} |\partial^k_\lambda \Fhat_n (\lambda)-\E  \Fhat_n (\lambda)|>t)\leq 2\exp\left(-cn\min \left( \frac{t^2}{\|\Sigma_k\|^2_F},\frac{t}{\|\Sigma_k\|}\right)\right),
\]
for each $k=1,2,3$. Here each $\Sigma_k$ consists of matrices of form $P_j S$ or $SP_j$ where $j=1,2,3,4$ and $S=I, D$ or $DD^\top$. Then because 
\[
\|P_jS\|\leq \|P_j\|\|S\|\leq \frac{\|C_0\|\|S\|}{\lambda_l^{j+1}},\quad \|P_jS\|_F\leq \frac{\|C_0\|_F\|S\|}{\lambda_l^{j+1}}.
\]
So we see that $c$ can depend on  $\|C_0\|\leq \Tr(C_0), \|C_0\|_F$ and $\|D\|$. Meanwhile, $\E \partial^k_\lambda \Fhat_n$ is a linear sum of some $\Tr(P_j)$. While 
\[
|\Tr(P_j)|\leq (\tfrac{2}{\lambda_l})^{j+1} \Tr(C_0).
\]
So $L$ can also be taken as a constant that depends only on $\Tr(C_0), \|C_0\|_F$ and $\|D\|$. This concludes our proof.
\end{proof}

Next, we show that if a function is bounded at each fixed point with high probability, it is likely to be bounded on a fixed interval if it is Lipschitz. 
\begin{lem}
\label{lem:chaining}
 Let $f_n(\lambda)$ be function of $\lambda$ and the following is true for some interval $I=[\lambda_l,\lambda_u]$
\[
\P(f_n(\lambda)>a)\leq  C\exp(-n c_a  )\quad \forall  \lambda_l\leq\lambda\leq \lambda_u.
\]
Then 
\[
\P\left({\max_{\lambda\in I }}f_n(\lambda)>2a, \max_{\lambda\in I}|\partial f_n(\lambda)|\leq M\right)\leq  a^{-1}|\lambda_u-\lambda_l|MC\exp(-nc_a).
\]
Let $f_n(\lambda)$ be function of $\lambda$ and the following is true for some interval $I$
\[
\P(f_n(\lambda)<a)\leq  \exp(-n c_a  )\quad \forall  \lambda_l\leq\lambda\leq \lambda_u.
\]
Then 
\[
\P\left(\min_{\lambda\in I }f_n(\lambda)<a/2, \max_{\lambda\in I}|\partial f_n(\lambda)|\leq M\right)\leq  2a^{-1}|\lambda_u-\lambda_l|MC\exp(-nc_a).
\]
\end{lem}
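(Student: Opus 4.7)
The plan is a standard $\varepsilon$-net (discretization) argument combined with a union bound, treating the two claims in parallel. First I would pick a mesh $\lambda^{(1)}<\lambda^{(2)}<\cdots<\lambda^{(N)}$ inside $I=[\lambda_l,\lambda_u]$ with uniform spacing $\delta$, so that $N\le |\lambda_u-\lambda_l|/\delta+1$. On the event $\{\max_{\lambda\in I}|\partial f_n(\lambda)|\le M\}$, the function $f_n$ is $M$-Lipschitz on $I$, hence for every $\lambda\in I$ there is an index $i$ with $|f_n(\lambda)-f_n(\lambda^{(i)})|\le M\delta$.

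For the upper-bound claim, I would set $\delta=a/M$. Then on the Lipschitz event, $\max_{\lambda\in I}f_n(\lambda)>2a$ can hold only if some net value satisfies $f_n(\lambda^{(i)})>a$, since otherwise $f_n(\lambda)\le f_n(\lambda^{(i)})+M\delta\le 2a$ everywhere. A union bound over the $N$ net points together with the pointwise tail hypothesis yields
\[
\PP\!\left(\max_{\lambda\in I}f_n(\lambda)>2a,\ \max_{\lambda\in I}|\partial f_n(\lambda)|\le M\right)\le N\cdot C\exp(-nc_a),
\]
and plugging in $N\le |\lambda_u-\lambda_l|M/a+1\le 2|\lambda_u-\lambda_l|M/a$ (absorbing the $+1$ into a constant, or using $a\le |\lambda_u-\lambda_l|M$ as the only non-trivial regime) gives the stated estimate, up to the constant which is the reason the $a^{-1}|\lambda_u-\lambda_l|MC$ form appears.

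The lower-bound claim is entirely analogous but requires a finer mesh: choose $\delta=a/(2M)$ so that on the Lipschitz event, $\min_{\lambda\in I}f_n(\lambda)<a/2$ forces $f_n(\lambda^{(i)})<a$ for some net point (otherwise $f_n(\lambda)\ge f_n(\lambda^{(i)})-M\delta\ge a-a/2=a/2$). This halving of $\delta$ doubles $N$ and is the source of the extra factor of $2$ in the conclusion; applying the union bound to the hypothesized tail yields the result.

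I do not expect any technical obstacle here: the only design choice is matching the net spacing to the slack constants ($a$ and $a/2$) through the Lipschitz bound $M$, and once that matching is fixed the remainder is a clean union bound. The lemma will then be used in conjunction with Lemma \ref{lem:Lipschitz} (which supplies the high-probability bound on $\max_{\lambda}|\partial^k_\lambda \widehat F_n(\lambda)|$) so that the Lipschitz event itself can be discarded at negligible probability cost, giving the uniform-in-$\lambda$ control needed for the offline consistency argument.
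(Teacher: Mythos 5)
Your proposal is correct and matches the paper's own argument essentially verbatim: the paper also builds a uniform mesh with spacing tuned to $a/M$ (resp.\ $a/(2M)$), uses the derivative bound to transfer the pointwise tail estimate to the whole interval, and finishes with a union bound over the net points. The only shared loose end is the $+1$ in the count of net points, which both you and the paper absorb into the constant without comment.
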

\begin{proof}
Pick $\lambda_i=\lambda_l+\frac{2a}{|M|} i$ for $i=0,\ldots, \lfloor \frac{|\lambda_u-\lambda_l|M}{2a}\rfloor$. Then $\lambda_l\leq \lambda_i\leq \lambda_u$, and for any $\lambda_l\leq \lambda\leq \lambda_u$, $|\lambda-\lambda_i|\leq \frac{a}{M}$ for some $\lambda_i$. Not that if $|\partial f_n(\lambda)|\leq M$, and $f_n(\lambda_i)\leq a$, for all $i$, then for any $\lambda_l\leq \lambda\leq \lambda_u$,
\[
f_n(\lambda)\leq f_n(\lambda_i)+(\lambda_i-\lambda)\partial_\lambda f_n(\lambda)\leq a+\frac{a}{M}M=2a.
\]
Consequentially, by union bound 
\begin{align*}
\P\bigg(\min_{\lambda\in I }f_n(\lambda)>2a, \max_{\lambda\in I}|\partial f_n(\lambda)|\leq M\bigg)&\leq \P\bigg(f_n(\lambda_i)>a \text{ for some }i\bigg) \\ & \leq a^{-1}|\lambda_u-\lambda_l|MC\exp(-nc_a).
\end{align*}
The same argument can be applied to show the second claim, except that we choose $\lambda_i=c+\frac{a}{|M|}$.
\end{proof}

The next lemma indicates that the loss function is strongly convex within $\calD$ with high probability. 

\begin{lem}\label{lem:hessian}
Assume that  the largest eigenvalue of $DD^\top$ is $\lambda_D$ and let $\lambda\in \mathcal D := [\frac5 6\lambda_\ast, \frac 76\lambda_\ast]$. Then for some constants $c,C>0$, 
\begin{align*}
\P(\min_{\lambda\in \mathcal D}\partial_\lambda^2 \Fhat_n(\lambda) &< H_\ast/4) 
			\le \frac{C}{\min\{H_\ast,1\}}\exp\bigg(-cn\min(H_\ast^2,H_\ast,1)\bigg),
\end{align*}
with
\[H_\ast = H_\ast(\lambda_D,\lambda_\ast) =\frac{\lambda_D^2 }{(\lambda_D+2\lambda_\ast)^2\lambda_\ast\|A^\top \Gamma^{-1}A\|}>0.
\]
\end{lem}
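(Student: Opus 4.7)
The proof has a deterministic part and a probabilistic part. The plan is (i) to show that the mean Hessian $\partial_\lambda^2 F(\lambda) = \E[\partial_\lambda^2 \Fhat_n(\lambda)]$ is uniformly bounded below by $H_\ast/2$ on $\mathcal{D}$, and (ii) to show that $\partial_\lambda^2 \Fhat_n$ concentrates uniformly within $H_\ast/4$ of its mean on $\mathcal{D}$. Together these give $\partial_\lambda^2 \Fhat_n(\lambda) \ge H_\ast/4$ over $\mathcal{D}$ on a high-probability event, whose complement has the claimed probability.

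For (i), I would start at $\lambda = \lambda_\ast$, where the formula of Lemma~\ref{cor:conv_derivatives} collapses because $1-\lambda/\lambda_\ast=0$, leaving
\[
\partial_\lambda^2 F(\lambda_\ast) = -\tfrac{1}{\lambda_\ast}\Tr(DD^\top P_2) = \tfrac{2}{\lambda_\ast}\Tr(DD^\top Q_{\lambda_\ast}^3 C_0),
\]
by cyclicity of the trace and the commutativity of $Q_\lambda$ with $DD^\top$. Writing $DD^\top = C_0^{1/2} M C_0^{1/2}$ with $M = A^\top\Gamma^{-1}A$ and diagonalising $DD^\top$ with eigenvalues $\mu_i\le \lambda_D$, the trace becomes a sum of non-negative contributions of the form $\mu_i^2 / [(\mu_i + \lambda_\ast)^3 m_i]$ with $m_i \le \|M\|$. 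Retaining only the top eigenvalue yields a lower bound of the same scale as $H_\ast$. To extend from $\lambda_\ast$ to the whole of $\mathcal{D}$, I would use the explicit formula for $\partial_\lambda^3 F$ from \eqref{tmp:F3} together with the uniform bound $T(P_k)\le (2/\lambda_l)^{k+1}T(C_0)$ to obtain $|\partial_\lambda^3 F| \le L_3$ for a DI constant $L_3$. Integrating over $|\lambda-\lambda_\ast|\le\lambda_\ast/6$ then gives $|\partial_\lambda^2 F(\lambda)-\partial_\lambda^2 F(\lambda_\ast)| \le L_3\lambda_\ast/6$, and the precise constants in $H_\ast$ are chosen to absorb this perturbation so that $\partial_\lambda^2 F(\lambda)\ge H_\ast/2$ throughout $\mathcal{D}$.

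For (ii), I would combine Lemma~\ref{cor:conv_derivatives} and Lemma~\ref{lem:Lipschitz} via the chaining argument of Lemma~\ref{lem:chaining}. Lemma~\ref{cor:conv_derivatives} applied with deviation $\varepsilon = H_\ast/4$ gives the pointwise bound
\[
\P\bigl(|\partial_\lambda^2 \Fhat_n(\lambda) - \partial_\lambda^2 F(\lambda)| > H_\ast/4\bigr) \le C\exp\bigl(-cn\min(H_\ast, H_\ast^2)\bigr).
\]
Lemma~\ref{lem:Lipschitz} ensures that $|\partial_\lambda^3 \Fhat_n|\le L$ uniformly on $\mathcal{D}$ outside an event of probability $6e^{-cn}$; the corresponding deterministic bound $|\partial_\lambda^3 F|\le L$ follows by taking expectation. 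Hence the difference $g_n := \partial_\lambda^2\Fhat_n - \partial_\lambda^2 F$ is $2L$-Lipschitz on the good event. Applying the second (min-version) claim of Lemma~\ref{lem:chaining} to $g_n$ and $-g_n$ with $a=H_\ast/4$, $M=2L$ and $c_a = c\min(H_\ast, H_\ast^2)$ yields a uniform concentration bound on $\mathcal{D}$ with a pre-factor of order $1/H_\ast$, producing both the $1/\min\{H_\ast,1\}$ scale in front and the $\min(H_\ast^2, H_\ast, 1)$ inside the exponent (the final $1$ reflecting the requirement that $\varepsilon$ lie in the regime where Lemma~\ref{cor:conv_derivatives} applies and that the chained grid remain finite).

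The main obstacle is the deterministic step (i): making the lower bound on $\Tr(DD^\top Q_{\lambda_\ast}^3 C_0)$ match the specific scale $\lambda_D^2/[(\lambda_D+2\lambda_\ast)^2\lambda_\ast\|M\|]$ in the general non-commuting case. The spectral argument above is clean when $C_0$ and $DD^\top$ are simultaneously diagonalisable; otherwise one must control the trace by a quadratic form, e.g.\ by rewriting it as $\Tr\bigl((DD^\top)^{1/2}Q_{\lambda_\ast}^{3/2}C_0 Q_{\lambda_\ast}^{3/2}(DD^\top)^{1/2}\bigr)$ and testing against the top eigenvector of $DD^\top$, with $C_0$-factors reabsorbed through $DD^\top=C_0^{1/2}MC_0^{1/2}$ to introduce $\|M\|$. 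The replacement of $(\lambda_D+\lambda_\ast)^3$ by $(\lambda_D+2\lambda_\ast)^2$ reflects the conservative uniform-in-$\mathcal{D}$ step together with a factor of $\lambda_\ast$ extracted from the $Q_\lambda$-factors to match the $1/\lambda_\ast$ prefactor.
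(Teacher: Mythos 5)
Your part (ii) --- pointwise concentration of $\partial_\lambda^2\Fhat_n$ around its mean via Lemma \ref{cor:conv_derivatives}, a high-probability bound on $\partial_\lambda^3\Fhat_n$ from Lemma \ref{lem:Lipschitz}, and the chaining Lemma \ref{lem:chaining} --- is exactly the paper's argument. The gap is in part (i), the deterministic lower bound $\partial_\lambda^2 F(\lambda)\ge H_\ast/2$ on all of $\mathcal D$. You establish the bound only at $\lambda=\lambda_\ast$ and propose to propagate it to $\mathcal D$ by integrating a uniform bound $|\partial_\lambda^3 F|\le L_3$, asserting that ``the precise constants in $H_\ast$ are chosen to absorb'' the resulting additive error $L_3\lambda_\ast/6$. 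But $H_\ast$ is a prescribed quantity, not a free parameter, and it can be arbitrarily small relative to $L_3$: as $\lambda_D\to 0$ one has $H_\ast=O(\lambda_D^2)$, while $L_3$ is a fixed constant of order $\Tr(C_0)\|DD^\top\|/\lambda_l^{5}$. An additive perturbation of fixed size $L_3\lambda_\ast/6$ cannot in general be absorbed into a lower bound of size $H_\ast$, so this step fails as stated.

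The paper avoids the issue by bounding the mean Hessian uniformly in $\lambda\in\mathcal D$ \emph{eigendirection by eigendirection}. Writing $\E\partial_\lambda^2\Fhat_n(\lambda)=\Tr\bigl(DD^\top((1-\lambda/\lambda_\ast)P_3-P_2/\lambda_\ast)\bigr)$ and expanding the trace in the eigenbasis $\{v_i\}$ of $DD^\top$ (which is also the eigenbasis of $Q_\lambda$), each term takes the form
\[
\left(\frac{6(1-\lambda/\lambda_\ast)}{(\lambda_i+\lambda)^{3}}+\frac{2}{(\lambda_i+\lambda)^{2}\lambda_\ast}\right)\lambda_i\, v_i^\top C_0 v_i ,
\]
and for $|\lambda-\lambda_\ast|\le\lambda_\ast/6$ the possibly negative first summand is dominated by the positive second summand \emph{in the same direction}: both carry the same resolvent factors, $|6(1-\lambda/\lambda_\ast)|\le 1$, and $\lambda_i+\lambda\ge\lambda$, so the negative part is at most half of the positive part. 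The perturbation is thus controlled multiplicatively, not additively, which is what lets the bound survive with the specific constant $H_\ast$; dropping all but the top eigendirection and using $v_1^\top C_0 v_1\ge\lambda_D/\|A^\top\Gamma^{-1}A\|$ then gives the claim. Incidentally, your worry about non-commutativity of $C_0$ and $DD^\top$ is a non-issue: only the non-negative quadratic forms $v_i^\top C_0 v_i$ appear, so no simultaneous diagonalisation is needed.
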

\begin{proof}
Denote
\[
\mathcal A=DD^\top((1-\lambda/\lambda_\ast) P_3-P_2/\lambda_\ast),
\]
and $v_i$ being the eigenvector of $DD^\top$ corresponds to eigenvalue $\lambda_i$. Note that $v$ is also the eigenvector $Q_\lambda$ with eigenvalue $(\lambda_i+\lambda)^{-1}$, then 
\[
v_i^\top \mathcal Av_i= \left(\frac{6(1-\lambda/\lambda_\ast)}{(\lambda_i+\lambda)^3} +\frac{2 }{(\lambda_i+\lambda)^2\lambda_\ast}\right) \lambda_i v_i^\top C_0 v_i. 
\]
When $\lambda\in\mathcal{D}$, if $7/6\lambda_\ast\geq \lambda>\lambda_\ast$
\[
\frac{6(1-\lambda/\lambda_\ast)}{(\lambda_i+\lambda)^3} +\frac{2 }{(\lambda_i+\lambda)^2\lambda_\ast}
\geq-\frac{1}{(\lambda_i+\lambda)^2\lambda }+\frac{2 }{(\lambda_i+\lambda)^2\lambda_\ast}\geq \frac1{(\lambda_i+\lambda)^2\lambda_\ast}.
\]
If $\lambda\leq \lambda_\ast$, the same relation also holds. 
Then note that if $v_i$ are all the eigenvectors of $DD^\top$ with eigenvalues $\lambda_i$,  while $\lambda_i$ are decreasing,
\[
\Tr \mathcal A=\sum_{i=1}^d v^\top_i \mathcal Av_i\geq  \sum_{i=1}^d \frac{\lambda_iv^\top_i C_0v_i}{(\lambda_i+2\lambda_\ast)^2\lambda_\ast}
=\frac{\lambda_1 v_1^\top C_0v_1}{(\lambda_1+2\lambda_\ast)^2\lambda_\ast}.
\]
Finally, note that 
\[
\lambda_D=\lambda_1=v^\top_1DD^\top v_1=v_1^\top C_0^{1/2} A^\top \Gamma^{-1} A C_0^{1/2} v_1\leq \|A^\top \Gamma^{-1}A\|\|C_0^{1/2} v_1\|^2.
\]
So
\[
\Tr \mathcal A\geq \frac{\lambda_D^2 }{(\lambda_D+2\lambda_\ast)^2\lambda_\ast\|A^\top \Gamma^{-1}A\|}=H_\ast.
\]

for $\lambda\in\mathcal D$ and we set $\varepsilon = H_\ast/2>0$ to apply Corollary \ref{cor:conv_derivatives}. We obtain some $C_1,c$
\begin{align*}
&C_1\exp\left(-cn\min(H_\ast^2,H_\ast)\right) \\ &\ge\P\Bigg(|\partial_\lambda^2 \Fhat_n(\lambda)-\frac{2}{\lambda_\ast}\Tr\left((3\lambda_\ast I-2\lambda I + DD^\top)DD^\top Q_\lambda^4\right)|>H_\ast/2\Bigg)\\
		&\ge \P\Bigg(\partial_\lambda^2 \Fhat_n(\lambda)<\frac{2}{\lambda_\ast}\Tr\left((3\lambda_\ast I-2\lambda I + DD^\top)DD^\top Q_\lambda^4\right)-H_\ast/2\Bigg)\\
		&\ge \P(\partial_\lambda^2 \Fhat_n(\lambda)<H_\ast/2).
\end{align*}
By Lemma \ref{lem:Lipschitz} there exists an $L>0$ and $c_1$ such that 
\[ 
\P\left(\max_{\lambda\in\mathcal D}|\partial^3_\lambda \Fhat_n(\lambda|)>L\right)\le 6\exp(-nc_1),
\]
and by Lemma \ref{lem:chaining} it holds true that

\begin{align*}
\frac{C_2}{\min(H_\ast,1)}&\exp\bigg(-cn\min(H_\ast^2,H_\ast,1)\bigg)\\
						&\ge \P\left(\min_{\lambda\in \mathcal D}\partial_\lambda^2 \Fhat_n(\lambda) < H_\ast/4,\max_{\lambda\in\mathcal D}|\partial^3_\lambda \Fhat_n(\lambda)|\le M\right),
\end{align*}
for some $C_2>0$. We define the sets $A_n:=\{\min_{\lambda\in \mathcal D}\partial_\lambda^2 \Fhat_n(\lambda) < H_\ast/4\}$ and $B_n:=\{\max_{\lambda\in\mathcal D}|\partial^3_\lambda \Fhat_n(\lambda)|\le L\}$, and we obtain
\begin{align*}
\P(\min_{\lambda\in \mathcal D}\partial_\lambda^2 \Fhat_n(\lambda) < H_\ast/4) &= \P(A_n\mid B_n)\P(B_n) + \P(A_n\mid B_n^\complement)\P(B_n^\complement)\\
			&\le \P(A_n\cap B_n) + \P(B_n^\complement)\\
%			&\le \frac2{H_\ast}M\lambda_\ast K_1\exp\bigg(-nK_2\min(\frac14H_\ast^2,\frac12H_\ast)\bigg) + 6 \exp(-nc_1).
			&\le \frac{C}{\min(H_\ast,1)}\exp\left(-\ cn\min(H_\ast^2,H_\ast,1)\right).
\end{align*}

\end{proof}

The last lemma indicates the empirical loss function is unlikely to have local minimums outside $[\frac23\lambda_\ast,\frac43 \lambda_\ast]$.
\begin{lem}\label{lem:derivative}
Assume again that  the largest eigenvalue of $DD^\top$ is $\lambda_D $. Let 
\[
 L_\ast=\frac{2\lambda_D^2}{3(\lambda_D+\lambda_u)^3\|A^\top\Gamma^{-1}A\|}.
 \]
There are constants $c, C$ such that 
\begin{align*}
\P\left(\min_{\lambda_u\ge\lambda>\tfrac43\lambda_\ast}\partial_\lambda\Fhat_n(\lambda)<L_\ast/4\right)
\le\frac{C}{\min\{L_\ast,1\}}\exp\left(-cn\min\left(L_\ast^2,L_\ast,1\right)\right),
\end{align*}
and
\begin{align*}
\P\left(\min_{\frac23\lambda_\ast\ge\lambda>\lambda_l}\partial_\lambda\Fhat_n(\lambda)>-L_\ast/4\right)
\le\frac{C}{\min\{L_\ast,1\}}\exp\left(-cn\min\left(L_\ast^2,L_\ast,1\right)\right).
\end{align*}
\end{lem}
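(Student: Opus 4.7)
The plan has three steps: first, compute $\E\partial_\lambda \Fhat_n(\lambda)$ explicitly and bound its magnitude below by $L_\ast$ whenever $\lambda$ lies outside $[\tfrac23\lambda_\ast, \tfrac43\lambda_\ast]$; second, combine this with the pointwise concentration from Lemma \ref{cor:conv_derivatives} to control $\partial_\lambda \Fhat_n(\lambda)$ for a single $\lambda$; and third, upgrade to the uniform statement via the chaining argument of Lemma \ref{lem:chaining}, using Lemma \ref{lem:Lipschitz} to bound $|\partial_\lambda^2 \Fhat_n|$ on an event of overwhelming probability. The structure parallels the proof of Lemma \ref{lem:hessian}, so the probabilistic template is already in place; the new ingredient is the deterministic lower bound.

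For the mean bound, Lemma \ref{cor:conv_derivatives} gives $\E\partial_\lambda \Fhat_n(\lambda) = (1-\lambda/\lambda_\ast)\Tr(P_2 DD^\top)$ with $P_2 = -(Q_\lambda^2 C_0 Q_\lambda + Q_\lambda C_0 Q_\lambda^2)$. Since $Q_\lambda = (DD^\top + \lambda I)^{-1}$ commutes with $DD^\top$, repeated cyclic trace identities collapse both summands to $\Tr(P_2 DD^\top) = -2\Tr(C_0 Q_\lambda^3 DD^\top)$. Diagonalising $DD^\top = \sum_i \lambda_i v_i v_i^\top$ (with $\lambda_1 = \lambda_D$ and eigenbasis shared with $Q_\lambda$) gives
\[
\Tr(C_0 Q_\lambda^3 DD^\top) = \sum_i \frac{\lambda_i}{(\lambda_i+\lambda)^3}\,v_i^\top C_0 v_i \ge \frac{\lambda_D}{(\lambda_D+\lambda_u)^3}\,v_1^\top C_0 v_1.
\]
The variational identity $\lambda_D = v_1^\top C_0^{1/2} A^\top\Gamma^{-1}A\, C_0^{1/2} v_1 \le \|A^\top\Gamma^{-1}A\|\,v_1^\top C_0 v_1$ then yields $v_1^\top C_0 v_1 \ge \lambda_D/\|A^\top\Gamma^{-1}A\|$, so $-\Tr(P_2 DD^\top) \ge 3L_\ast$ for every $\lambda \in (\lambda_l,\lambda_u]$. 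Since $|1-\lambda/\lambda_\ast| > 1/3$ whenever $\lambda$ lies outside $[\tfrac23\lambda_\ast, \tfrac43\lambda_\ast]$, we conclude that $\E\partial_\lambda \Fhat_n(\lambda) > L_\ast$ on $(\tfrac43\lambda_\ast,\lambda_u]$ and $\E\partial_\lambda \Fhat_n(\lambda) < -L_\ast$ on $(\lambda_l,\tfrac23\lambda_\ast)$.

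For the remaining steps, applying Lemma \ref{cor:conv_derivatives} with $\varepsilon = L_\ast/2$ at a fixed $\lambda$ yields, in the first region, $\P(\partial_\lambda\Fhat_n(\lambda) < L_\ast/2) \le C\exp(-cn\min\{L_\ast,L_\ast^2\})$, and an analogous one-sided bound in the second. Feeding this pointwise tail into the second assertion of Lemma \ref{lem:chaining} with $a=L_\ast/2$ and $f_n = \partial_\lambda \Fhat_n$, and using Lemma \ref{lem:Lipschitz} with $k=2$ to show that $\max_{\lambda\in(\lambda_l,\lambda_u)}|\partial_\lambda^2\Fhat_n(\lambda)|$ exceeds a fixed constant $M$ only on an event of probability $\le 6\exp(-nc_1)$, one obtains the first claim after a union bound; the prefactor $\min\{L_\ast,1\}^{-1}$ comes from the $a^{-1}$ term in the chaining estimate (binding only when $L_\ast < 1$), and the $1$ in $\min(L_\ast^2,L_\ast,1)$ absorbs the rate $c_1$ from the Lipschitz exceptional event. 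The second claim follows by the symmetric argument applied to $-\partial_\lambda\Fhat_n$ on $(\lambda_l,\tfrac23\lambda_\ast)$, whose mean is now bounded below by $L_\ast$. The main obstacle is really the deterministic computation of step one: extracting the exact constant $L_\ast$ requires carefully exploiting the commutation $Q_\lambda DD^\top = DD^\top Q_\lambda$ to reduce the two-sandwich structure of $P_2$ to $\Tr(C_0 Q_\lambda^3 DD^\top)$, together with the sharp variational lower bound on $v_1^\top C_0 v_1$. Once that identity is in place, the concentration-plus-chaining pipeline is routine and mirrors the proof of Lemma \ref{lem:hessian}.
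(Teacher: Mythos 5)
Your proposal is correct and follows essentially the same route as the paper: the same deterministic lower bound $-\Tr(P_2DD^\top)\ge 3L_\ast$ via the leading eigenvector of $DD^\top$ and the variational bound $v_1^\top C_0v_1\ge\lambda_D/\|A^\top\Gamma^{-1}A\|$, followed by the pointwise concentration of Lemma \ref{cor:conv_derivatives} with $\varepsilon=L_\ast/2$ and the chaining step of Lemmas \ref{lem:Lipschitz} and \ref{lem:chaining}, exactly as in Lemma \ref{lem:hessian}. Your reduction of $\Tr(P_2DD^\top)$ to $-2\Tr(C_0Q_\lambda^3DD^\top)$ via commutation is a slightly cleaner rendering of the same computation, and your accounting of where the $\min\{L_\ast,1\}^{-1}$ prefactor and the extra $1$ in the exponent arise is accurate.
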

\begin{proof}
We first note that  with $v$ being the leading eigenvector of $DD^\top$, 
\begin{align*}
-\Tr(P_2 DD^\top)\ge v^\top (Q_\lambda^2 C_0 Q_\lambda+Q_\lambda C_0 Q_\lambda^2) v&\geq
2\lambda_D\frac{v^\top C_0v}{(\lambda_D+\lambda_u)^3} \\ &\geq \frac{2\lambda_D^2}{(\lambda_D+\lambda_u)^3\|A^\top\Gamma^{-1}A\|}=:
 3L_\ast.
\end{align*}
For $\lambda>\frac43\lambda_\ast$ we have
\[
(1-\lambda/\lambda_\ast)\Tr(P_2DD^\top) \ge L_\ast.
\]

We set $\varepsilon=L_\ast/2$ to apply Lemma \ref{cor:conv_derivatives} and obtain
\begin{align*}
C\exp\left(-nc\min(L_\ast^2,L_\ast)\right)&\ge\P(|\partial_\lambda \Fhat_n(\lambda) - (1-\lambda/\lambda_\ast)\Tr(Q_\lambda^3)| > L_\ast/2)\\
			&\ge \P(\partial_\lambda \Fhat_n(\lambda)< (1-\lambda/\lambda_\ast)\Tr(Q_\lambda^3)-L_\ast/2)\\
			& \ge  \P(\partial_\lambda \Fhat_n(\lambda)< L_\ast/2).
\end{align*}
Similarly as in Lemma \ref{lem:hessian}, we use Lemma \ref{lem:Lipschitz} and Lemma \ref{lem:chaining} to obtain the first assertion by using 
\[
(1-\lambda/\lambda_\ast)\Tr(P_2 DD^\top) \le -L_\ast<0,
\]
for $\lambda<\frac23\lambda_\ast$.
\end{proof}
\subsection{Summarizing argument}
Finally, we are ready to prove Theorem \ref{thm:prob}.
\begin{proof}[Proof of Theorem \ref{thm:prob}]
Denote $\mathcal{D}=[\frac23\lambda_\ast, \frac43\lambda_\ast], $ 
\[H_\ast =\frac{\lambda_D^2 }{(\lambda_D+2\lambda_\ast)^2\lambda_\ast\|A^\top \Gamma^{-1}A\|}>0,
\]
and the events
\[
B=\{\lambda_l<\lambdahat_n<\lambda_u\},\quad \mathcal{A}_n=\{\lambdahat_n\in \mathcal D, \partial_\lambda^2 \Fhat_n(\lambda)\geq \tfrac14H_\ast \text{ for all }\lambda\in \mathcal D\}.
\]
First we decompose
\begin{align*}
\P(|\lambdahat_n-\lambda_\ast|>\varepsilon,B) = 
					&\ \P(|\lambdahat_n-\lambda_\ast|>\varepsilon,B\mid\mathcal A_n)\cdot\P(\mathcal A_n)\\
					&+\ \P(|\lambdahat_n-\lambda_\ast|>\varepsilon,B\mid\mathcal A_n^\complement)\cdot\P(\mathcal A_n^\complement)\\
					\le&\ \P(|\lambdahat_n-\lambda_\ast|>\varepsilon,B\mid\mathcal A_n)+\P(B\cap\mathcal A_n^\complement).
\end{align*}
In the last step we have used $\P(\lambdahat_n\le \lambda_u)=1$. By Proposition \ref{prop:cond_conv}
\begin{align*}
\P(|\lambdahat_n-\lambda_\ast|>\varepsilon,B\mid\mathcal A_n)
&\leq  \P\left(|\partial_\lambda \Fhat_n(\lambda_\ast)-\partial_\lambda F(\lambda_\ast)|>\frac14H_\ast\epsilon,B\right)\\
&=  \P\left(|\partial_\lambda \Fhat_n(\lambda_\ast)|>\frac14H_\ast\epsilon, B\right),
%&\leq C_1\exp(-nc_1\min\{\frac14H_\ast\epsilon,\frac1{16}H_\ast^2\epsilon^2\}). 
\end{align*}
which can be bounded by Lemmas \ref{cor:conv_derivatives} and \ref{lem:chaining}
\[
\P(|\lambdahat_n-\lambda_\ast|>\varepsilon\mid\mathcal A_n)\leq C_1\exp(-nc_1\min\{\epsilon,\epsilon^2\}),
\]
for some $C_1, c_1$.

We bound the probability $\P(\mathcal A_n^\complement)$ by
\[
\P(B\cap\mathcal A_n^\complement) \le \P(B, \lambdahat_n\notin \mathcal D) + \P(\{\partial_\lambda^2 \Fhat_n(\lambda)\ge H_\ast/4\ \text{for all}\ \lambda\in\mathcal D\}^\complement),
\]
and study both terms separately.
Note first, by Lemma \ref{lem:derivative}, for some constants $C_2,c_2>0$ the following holds  
\begin{align*}
\P(B,\lambdahat_n\notin\mathcal D) &\leq\ \P(\partial_\lambda\Fhat_n(\lambda)=0 \text{\ for some\ }\lambda\in(\lambda_l,\lambda_u)\setminus\mathcal D)\\
			&\leq\  C_2\exp\left(-c_2n\right).
			%&+\frac{L_\ast}4M\lambda_\ast C_1\exp\left(-C_2n\min\left(\frac1{16}L_\ast^2,\frac14L_\ast\right)\right)+12\exp(-nc_1).
\end{align*}
Second, by Lemma \ref{lem:hessian}, for some constants $C_3, c_3>0$ we obtain

\[
\P(\{\partial_\lambda^2 \Fhat_n(\lambda)\ge H_\ast/4\ \text{for all}\ \lambda\in\mathcal D\}^\complement)\le C_3\exp\left(-\ c_3n\right).
\]
So there exist some constants $C_\ast,c_\ast>0$ such that
\begin{align*}
\bbP(|\hat{\lambda}_n-\lambda_\ast|>\epsilon)\leq C_\ast\exp(-c_\ast n\min(\epsilon,\epsilon^2)).
\end{align*}
\end{proof}

\begin{proof}[Proof of Proposition \ref{prop:Lip}]
As for the last claim, note that 
\[
u_\lambda(y)=(A^\top\Gamma^{-1}A +\lambda C^{-1}_0)^{-1} A^\top\Gamma^{-1}y= C_0^{1/2}  Q_\lambda y,
\]
moreover by the chain rule, there is a $w$ between $\lambda$ and $\lambda_\ast$, so that
\[
\|Q_\lambda-Q_{\lambda_\ast}\|=|\lambda-\lambda_\ast| \|Q^2_{w} \|\leq \frac{|\lambda-\lambda_\ast|}{\lambda_l^2},
\]
and
\begin{align*}
\E_y\|u_\lambda(y)-u_{\lambda_\ast}(y)\|^2&= \E_y\Tr(C_0 (Q_\lambda-Q_{\lambda_\ast}) yy^T(Q_\lambda-Q_{\lambda_\ast}))\\
&= \Tr(C_0 (Q_\lambda-Q_{\lambda_\ast}) (AC_0 A^\top/\lambda_\ast+\Gamma)(Q_\lambda-Q_{\lambda_\ast}))\\
&\leq \frac{|\lambda-\lambda_\ast|^2}{\lambda_l^4}\|AC_0 A^\top/\lambda_\ast+\Gamma\|\Tr(C_0). 
\end{align*}
This concludes our proof. 
\end{proof}

\section{Online consistency analysis}
\subsection{Stochastic gradient decent}
We start by verifying Lemma \ref{lem:grad_reg}. 
\begin{proof}[Proof of Lemma \ref{lem:grad_reg}]
We apply the implicit function theorem to prove this statement. For fixed $y\in\R^K$, we define the function 
\begin{equation*}
\varphi(\lambda,u) := \nabla_u \Psi(\lambda,u,y).
\end{equation*}
Since $(\lambda,u)\mapsto \Psi(\lambda,u,y)$ is strictly convex, we have that for all $(\lambda,u)$ near $(\lambda_0,u_{\lambda_0})$ the Jacobian of $\varphi$ w.r.t. $u$ is invertible, i.e.~
\begin{equation*}
D_u\varphi(\lambda,u) = \nabla_u^2 \Psi(\lambda,u,y)>0.
\end{equation*}

Set $\bar\lambda\in\R^d$ arbitrary, then for $\bar u=u_{\bar\lambda}(y)$ it holds true that
\begin{equation*}
\varphi(\bar\lambda,\bar u) = 0,
\end{equation*}
and by the implicit function theorem there exists a open neighborhood $\calD\subset\R^d$ of $\lambda_0$  with $\bar\lambda\in\calD$ such that there exists a unique continuously differentiable function $\bar U:\calD\to\R^d$ with $\bar U(\bar\lambda) = \bar u$ and 
\begin{equation*}
\varphi(\lambda,\bar U(\lambda)) = 0,
\end{equation*}
for all $\lambda\in \Lambda$, i.e.~ $\bar U$ maps all $\lambda\in \Lambda$ to the corresponding regularized solution $\bar U(\lambda) = u_\lambda(y)$. Further, the partial derivatives of $\bar U$ are given by
\begin{equation*}
\frac{\partial \bar U}{\partial \lambda_i}(\lambda) = -\left[D_u\varphi(\lambda,\bar U(\lambda))\right]^{-1}\left[\frac{\partial\varphi}{\partial \lambda_i}(\lambda,\bar U(\lambda))\right].
\end{equation*}
Since the choice of $\bar\lambda\in\R^d$ is arbitrary, it follows that $\lambda\mapsto u_\lambda(y)$ is continuously differentiable with derivative given by
\begin{equation*}
\nabla_\lambda u_\lambda(y) = -\left(\nabla_{u}^2 \left[\Psi(\lambda,u_\lambda(y),y)\right]\right)^{-1}\nabla_{\lambda u}^2\left[\Psi(\lambda,u_\lambda(y),y)\right].
\end{equation*}
The computation of $\nabla_\lambda f$ can be obtained by the chain rule.

\end{proof}

\subsection{General consistency analysis framework}
\begin{proof}[Proof of Proposition \ref{prop:gensgd}]
 
Note that 
$$ \Delta_{k+1} = \chi(\lambda_k -\beta_k\widetilde\partial_\lambda f(\lambda_k,Z_k))-\lambda_\ast,$$
and apply Lemma~\ref{lem:convball}
\begin{align*}
\|\Delta_{k+1}\|^2 = \|\chi\left(\lambda_k -\beta_k\widetilde\partial_\lambda f(\lambda_k,Z_k)\right)-\lambda_\ast\|^2&\le \|\lambda_k -\beta_k\widetilde\partial_\lambda f(\lambda_k,Z_k)-\lambda_\ast\|^2\\ &= \|\Delta_k -\beta_k\widetilde\partial_\lambda f(\lambda_k,Z_k)\|^2\\
&= \|\Delta_k -\beta_k\partial_\lambda F(\lambda_k,Z_k)-\beta_k\delta_k-\beta_k \xi_k\|^2,
\end{align*}
where
\[
\delta_k=\E_k\widetilde\partial f(\lambda_k,Z)-\partial F(\lambda_k),\quad\xi_k=\widetilde\partial_\lambda f(\lambda_k,Z_k)-\E_k\widetilde\partial_\lambda f(\lambda_k,Z),
\]
is the bias and noise in the stochastic gradient, we denote the expectation conditioned on information available at step $k$ as $\E_k$ and define the first exit time of $\mathcal D$ by with $\tau=\inf\{k\ge0\mid \lambda_k\in \mathcal D\}$. Next, we note that 
\[
\E_k \|\partial f(\lambda_k,Z_k)\|^2=\|\partial_\lambda F(\lambda_k)+\delta_k\|^2+\E_k\|\xi_k\|^2.
\] 
So if $\tau\geq k$, 
\begin{align*}
\E_k\|\Delta_{k+1}\|^2&\leq \|\Delta_k\|^2-2\beta_k\Delta_k^T(\partial_\lambda F(\lambda_k)+\delta_k)+\beta_k^2\|\partial_\lambda F(\lambda_k)+\delta_k\|^2+\E_k\|\xi_k\|^2\\
&\leq \|\Delta_k\|^2-2\beta_k\Delta_k^T\partial_\lambda F(\lambda_k)+2\beta_k\|\Delta_k\|\|\delta_k\|+\beta_k^2(a+b\|\Delta_k\|^2)\\
&\leq \|\Delta_k\|^2-2c\beta_k \|\Delta_k\|^2+\frac12c\beta_k\|\Delta_k\|^2+\frac{2}{c}\beta_k\|\delta_k\|^2+\beta_k^2a+b\beta_k^2\|\Delta_k\|^2\\
&\leq (1-1.5c\beta_k+b\beta_k^2)\|\Delta_k\|^2+(a\beta_k+2\alpha_k/c) \beta_k. 
\end{align*}
Since $\beta_k<c/2b$, we have 
\[
\E_k 1_{\tau\geq k+1}\|\Delta_{k+1}\|^2\leq
\E_k 1_{\tau\geq k}\|\Delta_{k+1}\|^2\leq
1_{\tau\geq k}(1-c\beta_k)\|\Delta_{k}\|^2+(a\beta_k+2\alpha_k/c)\beta_k.
\]
Let $Q_k=1_{\tau_k\geq k}\|\Delta_k\|^2$, then we just derived that 
\[
\E Q_{k+1}\leq (1-c\beta_k)\E Q_k +(a\beta_k+2\alpha_k/c)\beta_k.
\]
Therefore by Gronwall's inequality
\begin{equation}
\label{temp:Qn}
\begin{split}
\E Q_n\leq a\sum_{k=1}^n \left(\prod_{j=k+1}^n(1-c\beta_j) \beta_k^2\right)&+\frac{2}{c}\sum_{k=1}^n \left(\prod_{j=k+1}^n(1-c\beta_j) \beta_k\alpha_k\right)\\
&+\exp\left(-c\sum_{j=1}^n\beta_j\right)\E Q_0.
\end{split}
\end{equation}
Next we look at the 2nd term of \eqref{temp:Qn}. Note that when $\alpha_k\leq \alpha_0$, then 
\begin{align*}
\frac{2}{c}\sum_{k=1}^{n} \prod_{j=k+1}^n(1-c\beta_j)\beta_k\alpha_k
&\leq \frac{\alpha_{0}}{c^2}\sum_{k=1}^{n} \prod_{j=k+1}^n(1-c\beta_j)c\beta_k \\
&\leq \frac{\alpha_{0}}{c^2}\sum_{k=1}^{n} \left(\prod_{j=k+1}^n(1-c\beta_j)-\prod_{j=k}^n(1-c\beta_j)\right)\leq \frac{\alpha_{0}}{c^2}.
\end{align*}
In this case, \eqref{temp:Qn} becomes 
\[
\E Q_n\leq a\sum_{k=1}^n \left(\prod_{j=k+1}^n(1-c\beta_j) \beta_k^2\right)+\frac{\alpha_0}{c^2}
+\exp\left(-c\sum_{j=1}^n\beta_j\right)\E Q_0.
\]
And if $\alpha_k\leq D\beta_k$, then \eqref{temp:Qn} can be simplified as 
\[
\E Q_n\leq (a+D/c)\sum_{k=1}^n \left(\prod_{j=k+1}^n(1-c\beta_j) \beta_k^2\right)+\exp\left(-c\sum_{j=1}^n\beta_j\right)\E Q_0.
\]
In both cases, to show our claim, we just need to show 
\[
\sum_{k=1}^n \left(\prod_{j=k+1}^n(1-c\beta_j) \beta_k^2\right)\leq 2C_n,
\quad \exp\left(-c\sum_{j=1}^n\beta_j\right)\leq C_n. 
\]

Let $k_0$ be the minimizer of 
\[
k_0=\arg\min_{k\leq n}\max\{\prod_{j=k+1}^n(1-c\beta_j), a\beta_k/c\}.
\]
Then note that,
\begin{align*}
\sum_{k=1}^{k_0} \prod_{j=k+1}^n(1-c\beta_j)\beta_k^2
&\leq \sum_{k=1}^{k_0} \prod_{j=k_0+1}^n(1-c\beta_j)\beta_k^2\\
&\leq \prod_{j=k_0+1}^n(1-c\beta_j)\sum_{k=1}^\infty \beta_k^2
\leq C_n. 
\end{align*}
also 
\begin{align*}
\sum_{k=k_0+1}^{n} \prod_{j=k+1}^n(1-c\beta_j)\beta_k^2
&\leq \frac{1}{c}\beta_{k_0}\sum_{k=1}^{k_0} \prod_{j=k+1}^n(1-c\beta_j)c\beta_k \\
&\leq \frac{1}{c}\beta_{k_0}\sum_{k=1}^{k_0} \left(\prod_{j=k+1}^n(1-c\beta_j)-\prod_{j=k}^n(1-c\beta_j)\right) \\
&\leq \frac{1}{c}\beta_{k_0}=C_n.
\end{align*}
The sum of the previous two inequalities leads to 
\[
\sum_{k=1}^n \left(\prod_{j=k+1}^n(1-c\beta_j) \beta_k^2\right)\leq 2C_n.
\]
Finally
\[
\exp(-c\sum_{j=1}^n\beta_j)\E Q_0\leq 
\exp(-c\sum_{j=k_0+1}^n\beta_j)\E Q_0\leq C_n. 
\]
To see that $C_n$ converges to zero, simply let 
\[
k_{n}=\max_{k}\left\{\prod_{j=1}^k(1-c\beta_j)>\sqrt{\prod_{j=1}^n(1-c\beta_j)}\right\}.
\]
Because $\prod_{j=1}^n(1-c\beta_j)$ decays to zero when $n\to \infty$, so $k_n$ will increases to $\infty$, and $\beta_{k_n}$ will decay to zero. 
Meanwhile,
\[
C_n\leq \min\left\{\prod_{j=k+1}^n(1-c\beta_j),\beta_{k_n}\right\}
\leq \min\left\{\sqrt{\prod_{j=1}^n(1-c\beta_j)},\beta_{k_n}\right\},
\]
which will decay to zero when $n\to \infty$.
\end{proof}

\subsection{Application to linear inverse problems}\label{app:proof_linear_online}
\begin{proof}[Proof of Theorem \ref{thm:linSGD}]

We will set $\calD=\Lambda=[\lambda_l,\lambda_u]$. Then because $\chi$ always bring $\lambda_k$ back into $\calD$, the event $\calA$ always happen.

Recall that 
\begin{align*}
\partial_\lambda f(\lambda,z)=\Tr\bigg( & P_2(\lambda^2 v v^\top+2\lambda D\xi v^\top+D\xi\xi^\top D^\top)+P_1(2\lambda vv^\top+2 D\xi v^\top) \bigg).
\end{align*}
\begin{align*}
\partial^2_\lambda f(\lambda,z)=\Tr\bigg( & P_3(\lambda^2 vv^\top+2\lambda D\xi v^\top+D\xi\xi^\top D^\top) +4P_2(\lambda vv^\top+D\xi v^\top)+2P_1 vv^\top \bigg). 
%=2\Tr\bigg(& (\Omega_0^2-4\lambda Q_\lambda^3\Omega_0+Q_\lambda^2)vv^\top+2(3\lambda Q_\lambda^4\Omega^2_0-2Q_\lambda^3\Omega_0)D\xi v^\top+3Q_\lambda^4 \Omega_0^2D\xi\xi^\top D^\top\bigg)
\end{align*}
\begin{equation}
\label{tmp:f3}
\partial^3_\lambda f(\lambda,z)=\Tr\bigg(  P_4(\lambda^2 vv^\top+2\lambda D\xi v^\top+D\xi\xi^\top D^\top) +6P_3(\lambda vv^\top+D\xi v^\top)+6P_2 vv^\top \bigg). 
\end{equation}

We have seen in the proof of Lemma \ref{lem:derivative}, that
\begin{equation*}
- \partial_\lambda F(\lambda)=(\lambda/\lambda_\ast -1)\Tr(P_2DD^\top).
\end{equation*}
Multiplication with $(\lambda-\lambda_\ast)$ gives
\begin{equation*}
-(\lambda-\lambda_\ast)\partial_\lambda F(\lambda) = (\lambda-\lambda_\ast)^2\Tr(P_2D^TD/\lambda_\ast).
\end{equation*}
Note that if $v$ is the eigenvector of $DD^\top$ with maximum eigenvalue $\lambda_D$
\[
\Tr(P_2DD^\top/\lambda_\ast)\geq v^\top P_2 DD^\top v/\lambda_\ast= \frac{\lambda_Dv^\top C_0v}{\lambda_\ast(\lambda_D+\lambda)^3}\geq \frac{\lambda_D^2}{\lambda_\ast(\lambda_D+\lambda_u)^3\|A^\top \Gamma^{-1}A\|}.
\]
So if we take $c$ as
\[
c=\frac{2\lambda_D^2}{\lambda_\ast(\lambda_D+\lambda_u)^3\|A^\top \Gamma^{-1}A\|},
\]
\eqref{eqn:A2} is verified.

Next, we note that by Taylor's theorem, there are some $w_k,w'_k$ between $\lambda_k-h_k$ and $\lambda_k+h_k$ such that 
\begin{align*}
|\widetilde\partial_\lambda f(\lambda_k,Z)-\partial_\lambda f(\lambda_k,Z)|
=\frac16h^2_k |\partial^3_{\lambda^3} f(w_k, Z)+\partial^3_{\lambda^3} f(w'_k, Z)|.
\end{align*}
Therefore 
\begin{align*}
\E |\widetilde\partial_\lambda f(\lambda_k,Z)-\partial_\lambda f(\lambda_k,Z)|^2
=\frac1{18}h^4_k (\E |\partial^3_{\lambda^3} f(w_k, Z)|^2+\E |\partial^3_{\lambda^3} f(w'_k, Z)|^2).
\end{align*}
We will show that there is dimension free constant $B_\lambda$ that may depend on $\lambda$ such that 
\begin{equation}
\label{tmp:p3}
\E |\partial^3_{\lambda^3} f(w_k, Z)|^2\leq B_\lambda,\quad \text{and} \quad \E |\partial^3_{\lambda^3} f(w'_k, Z)|^2\leq B_\lambda.
\end{equation}
This comes from the fact that each component of $\partial^3_{\lambda^3} f(w_k, Z)$ can be written as $\Tr(\Sigma C_v)$ or $\Tr(\Psi B)$ or $\Tr(\Sigma C_\xi)$, with some $\Sigma$ and $\Psi$. Here we define
\[
C_v= vv^\top,\quad B=\xi v^\top,\quad C_\xi=\xi\xi^\top.
\]
Then Lemma \ref{lem:chaining} with $n=1$ shows that for some universal constant $C$
\[
\E (\Sigma C_v)\leq 2|\Tr(\Sigma)|^2+C(\|\Sigma\|_F^3+\|\Sigma\|^3). 
\]
\[
\E (\Sigma C_\xi)\leq 2|\Tr(\Sigma)|^2+C(\|\Sigma\|_F^3+\|\Sigma\|^3). 
\]
\[
\E (\Psi B)\leq C(\|\Psi\|_F^3+\|\Psi\|^3). 
\]
Meanwhile, the $\Sigma$ matrices in $\partial^3_{\lambda^3} f(w_k, Z)$ are of form $P_j$ or  $D^\top P_j D$, which we know have bounded operator, trace and Frobenius norms, from the proof of Proposition \ref{prop:gensgd}, and the $\Psi$ matrix is of form $P_j D$, so $\|P_j D\|_F\leq \|P_j\|_F\|D\|, \|P_j D\|\leq \|P_j\|\|D\|$. So we can conclude that there is a dimension free constant $B$, such that \eqref{tmp:p3} holds.

Therefore  \eqref{eqn:A3} is verified by Jensen's inequality
\begin{align*}
|\E\widetilde\partial_\lambda f(\lambda_k,Z)-\partial_\lambda F(\lambda_k)|^2&=
|\E\widetilde\partial_\lambda f(\lambda_k,Z)-\E\partial_\lambda f(\lambda_k,Z)|^2\\
&\leq \E|\widetilde\partial_\lambda f(\lambda_k,Z)-\partial_\lambda f(\lambda_k,Z)|^2\leq \frac{1}{9}h_k^4B_\lambda. 
\end{align*}
Finally, because $\lambda \in [\lambda_l,\lambda_u]$, so $B_\lambda$ can be bounded as well. 

To prove that \eqref{eqn:A4} is satisfied, note that by Young's inequality 
\[
\E|\widetilde\partial_\lambda f(\lambda_k,Z)|^2\leq 2\E|\widetilde\partial_\lambda f(\lambda_k,Z)-\partial_\lambda f(\lambda_k,Z)|^2+2\E |\partial_\lambda f(\lambda_k,Z)|^2.
\]
Since we have already bounded $\E|\widetilde\partial_\lambda f(\lambda_k,Z)-\partial_\lambda f(\lambda_k,Z)|^2$, it suffices to bound $\E |\partial_\lambda f(\lambda_k,Z)|^2$ by a dimension independent constant $A_\lambda$. But each component of $\partial_{\lambda} f(w_k, Z)$ can be written as $\Tr(\Sigma C_v)$ or $\Tr(\Psi B)$ or $\Tr(\Sigma C_\xi)$, with some $\Sigma$ and $\Psi$. Then Lemma \ref{lem:chaining} with $n=1$ shows that for some universal constant $C$
\[
\E (\Sigma C_v)\leq 2|\Tr(\Sigma)|^2+C(\|\Sigma\|_F^3+\|\Sigma\|^3). 
\]
\[
\E (\Sigma C_\xi)\leq 2|\Tr(\Sigma)|^2+C(\|\Sigma\|_F^3+\|\Sigma\|^3). 
\]
\[
\E (\Psi B)\leq C(\|\Psi\|_F^3+\|\Psi\|^3). 
\]
Meanwhile, the $\Sigma$ matrices in $\partial^3_{\lambda^3} f(w_k, Z)$ are of form $P_j$ or  $D^\top P_j D$, which we know have bounded operator, trace and Frobenius norms. And the $\Psi$ matrix is of form $P_j D$, so $\|P_j D\|_F\leq \|P_j\|_F\|D\|, \|P_j D\|\leq \|P_j\|\|D\|$. So we can conclude that there is a dimension free constant $A_\lambda$, such that $\E |\partial_\lambda f(\lambda_k,Z)|^2\leq A_\lambda$. Finally, we note that $\lambda\in [\lambda_l,\lambda_u]$, we have our conclusion.

\end{proof}

\end{document}